\newcommand{\C}{\ensuremath{\mathbb{C}}}
\newcommand{\R}{\ensuremath{\mathbb{R}}}
\newcommand{\T}{\ensuremath{\mathbb{T}}}
\newcommand{\Z}{\ensuremath{\mathbb{Z}}}
\newcommand{\ba}{\begin{align*}}
\newcommand{\ea}{\end{align*}}
\newcommand{\na}{\nabla}
\newcommand{\la}{\langle}
\newcommand{\ra}{\rangle}
\newcommand{\lc}{\left(}
\newcommand{\rc}{\right)}
\newcommand{\ep}{\epsilon}
\newcommand{\Rc}{\mathrm{Rc}}
\newcommand{\g}{\mathfrak{g}}
\newcommand{\h}{\mathfrak{h}}
\newcommand{\lt}{\mathbf{\mathfrak{t}}}
\newcommand{\LL}{\mathcal{L}}
\def\ExtendSymbol#1#2#3#4#5{\ext@arrow 0099{\arrowfill@#1#2#3}{#4}{#5}}
\def\ExtendSymbol#1#2#3#4#5{\ext@arrow 0099{\arrowfill@#1#2#3}{#4}{#5}}
\def\XXint#1#2#3{{\setbox0=\hbox{$#1{#2#3}{\int}$ }
\vcenter{\hbox{$#2#3$ }}\kern-.55\wd0}}
\numberwithin{equation}{section}
\newtheorem{thm}{Theorem}[section]
\newtheorem{cor}[thm]{Corollary}
\newtheorem{prop}[thm]{Proposition}
\newtheorem{lem}[thm]{Lemma}
\newtheorem{rem}[thm]{Remark}
\newtheorem{defn}[thm]{Definition}
\newtheorem{exmp}[thm]{Example}
\title{Uniqueness of K\"ahler Ricci shrinkers on toric orbifolds}
\author{Yu Li \quad and \quad Wenjia Zhang}
\date{\today}
\begin{document}
\maketitle

\begin{abstract}
In this paper, we prove the uniqueness of K\"ahler Ricci shrinkers on toric orbifolds, extending the corresponding results previously established for toric manifolds.
\end{abstract}

\tableofcontents

\section{Introduction}

A Ricci shrinker $(M, g, f)$ on a manifold or orbifold $M$ is a complete Riemannian manifold or orbifold coupled with a smooth function $f:M \to \R$ such that
\begin{align*}
\Rc+\na^2 f=\frac{1}{2} g.
\end{align*} 

Ricci shrinkers on manifolds have been a central topic of study in the theory of Ricci flow. They are significant for two main reasons. First, they can be regarded as canonical metrics on manifolds, serving as natural generalizations of Einstein metrics. While Einstein metrics are static solutions to the Ricci flow, Ricci shrinkers represent self-similar solutions. Second, as critical metrics of Perelman's celebrated $\mu$-functionals, Ricci shrinkers play a crucial role in the singularity analysis of Ricci flow. They provide models for understanding the formation of singularities, allowing for the classification and resolution of those singular regions.

In dimensions 2 and 3, all Ricci shrinkers on manifolds have been fully classified; see \cite{Ha95} \cite{Pe02} \cite{Naber} \cite{NW} \cite{CCZ}, among others. The complete list consists of $\R^2,S^2,\R^3,S^3,S^2 \times \R$ and their quotients. A K\"ahler Ricci shrinker $(M, g, J, f)$ is a Ricci shrinker on a complex manifold $(M, J)$ so that $g$ is a K\"ahler metric. The classification of K\"ahler Ricci shrinkers is crucial for understanding singularity formation along the Kähler Ricci flow. Recently, the classification of all K\"ahler Ricci shrinkers on complex surfaces has been fully completed (cf. \cite{CDS24} \cite{CCD24} \cite{BCCD24} and \cite{LW23}). However, in higher dimensions, the classification of Ricci shrinkers remains an open and challenging problem that seems beyond current reach.

The uniqueness of Ricci shrinkers within certain classes is one of the most important questions in the study of Ricci shrinkers. On compact complex manifolds, Tian and Zhu \cite{TZ02a} \cite{TZ02b} have established a general uniqueness theorem for Kähler Ricci shrinkers in the anti-canonical class. However, for noncompact cases, the situation is far more complicated and remains unresolved in general. In the specific case of complex toric manifolds, the uniqueness of K\"ahler Ricci shrinkers was proven by Cifarelli \cite{Ci22}, offering progress in this particular setting.

Orbifolds are natural generalizations of manifolds that allow for singularities and are locally modeled on quotients of $\R^n$ by a finite group action. They arise naturally in various areas, such as differential geometry and algebraic geometry. Ricci flows and Ricci shrinkers on orbifolds have been studied by many people. For instance, it was shown in \cite{Wu91} that there exists a unique Ricci shrinker on any $2$-dimensional bad orbifold without boundary. Another notable result is that the application of Ricci flow on orbifolds provides a new proof of the geometrization of $3$-dimensional orbifolds \cite{KL14}. Even in the context of Ricci flow on manifolds, Ricci shrinkers on orbifolds emerge naturally as models for singularities (cf. \cite{CW12} \cite{Bam20}).

In this paper, we extend the aforementioned uniqueness result to toric orbifolds. Our main theorem is:

\begin{thm}
\label{thm:A}
Suppose $(M^{2n}, J, \T^n)$ is a complex orbifold with an effective and holomorphic $\T^n$-action. Then, up to equivariant  biholomorphism, there is at most one $\T^n$-invariant K\"ahler Ricci shrinker $(M,g, J, f)$ with bounded scalar curvature.
\end{thm}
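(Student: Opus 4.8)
The plan is to use the torus symmetry to recast the shrinker equation as a problem in convex analysis, to isolate the soliton vector field as the unique critical point of a strictly convex weighted-volume functional, and then to obtain uniqueness of the metric from a convexity argument for the resulting real Monge--Amp\`ere equation. First I would restrict to the open dense orbit of the complexified torus $(\C^*)^n \subset M$ and introduce logarithmic coordinates $w_j = x_j + \st\,\theta_j$. Since $g$ is a $\T^n$-invariant \ka metric, its local potential descends to a smooth strictly convex function $\phi$ on $\R^n$; the moment map is $\mu = \na\phi$, its image is a convex set $P \subset \R^n$, and the Legendre dual $u = \phi^{\ast}$ is the symplectic potential on the interior of $P$. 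The orbifold structure is encoded in the Lerman--Tolman labels on the facets of $P$, near which $u$ carries a prescribed logarithmic singularity weighted by the order of the local isotropy group, and the hypothesis of bounded scalar curvature should force the metric to be complete with controlled asymptotics, so that $\phi$ has the correct superlinear growth and the labeled normal fan of $P$ is determined by $(M, J, \T^n)$ alone.

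Because $f$ is $\T^n$-invariant, $\na^{1,0} f$ is a holomorphic vector field in the complexified Lie algebra of the torus, hence corresponds to a fixed vector $\xi \in \tl \cong \R^n$, and a standard computation gives $f = \la \xi, \mu \ra + \mathrm{const}$, so that $f$ is affine in the moment coordinates. Substituting this into $\Rc + \na^2 f = \tfrac12 g$ turns the shrinker equation into a real Monge--Amp\`ere equation of soliton type, schematically
\begin{align*}
\log\det\lc\na^2\phi\rc = \phi + \la \xi, \na\phi\ra + q(x),
\end{align*}
where $q$ is a fixed quadratic polynomial coming from the $\tfrac12 g$ term; dually, $u$ solves an Abreu-type linear equation on $P$ with the soliton weight $e^{-\la \xi, y\ra}$. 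The central step is then to pin down $\xi$ independently of the metric. I would normalize each shrinker by translating its moment image so that the unique critical point of $f$, where $\na f = 0$, maps to the origin; after this the labeled normal fan of $P$ depends only on $(M, J, \T^n)$. Introducing the weighted-volume functional
\begin{align*}
\mathcal{F}(\zeta) = \int_P e^{-\la \zeta, y\ra}\, d\sigma(y), \qquad \zeta \in \tl,
\end{align*}
on the open convex set of $\zeta$ making the integral finite, one sees that $\mathcal{F}$, being an integral of strictly convex exponentials against a measure supported on a set with nonempty interior, is strictly convex and so has at most one critical point; since the soliton vector of any shrinker must be exactly this critical point, any two shrinkers share the same $\xi$.

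With $\xi$ fixed, the shrinker equation becomes the Euler--Lagrange equation of a weighted Ding/Mabuchi-type functional on the space of $\T^n$-invariant \ka potentials. In the convex picture the natural geodesics are the linear interpolations $\phi_t = (1-t)\phi_0 + t\phi_1$, along which this functional is strictly convex, so it admits a unique minimizer; this forces the symplectic potential $u$, and hence the metric $g$, to be unique up to the torus action and the additive normalization of $f$. Since $J$ and the $\T^n$-action are recovered from the labeled fan of $P$, two shrinkers on the same $(M, J, \T^n)$ are identified by an equivariant biholomorphism, which is the assertion.

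I expect the main obstacle to be the interaction of the orbifold singularities with non-compactness in the analytic steps above: establishing the asymptotic control near the labeled facets and at infinity that makes $\mathcal{F}$ finite and strictly convex, and verifying that after the canonical normalization the labeled fan of $P$ is genuinely metric-independent, so that the critical-point characterization of $\xi$ is legitimate.
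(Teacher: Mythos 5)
Your overall route coincides with the paper's: pass to the symplectic potential on the moment polyhedron, identify the soliton vector as the unique critical point of the weighted volume functional, and conclude via convexity of the Ding functional along linear geodesics. However, there is a genuine gap at the step where you pin down $\xi$. The functional $\mathcal{F}(\zeta)=\int_P e^{-\la \zeta,y\ra}\,dy$ depends on the polyhedron $P$ itself, not merely on its normal fan; two shrinker metrics could a priori produce polyhedra with the same fan but different support constants $a_i$, hence different functionals with different critical points, and then the conclusion that the two shrinkers share the same $\xi$ fails. Your proposed normalization --- translating the moment image so that ``the unique critical point of $f$'' goes to the origin --- does not repair this: a translation cannot change the shape of $P$, and the critical set of $f$ on a shrinker is in general only compact, not a single point. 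The missing ingredient is that the shrinker equation forces $[\omega]=-4\pi\mathcal K$ in $H^2_{\mathrm{dR}}(M)$, and for a toric orbifold the class $[\omega]$ determines the labeled polyhedron up to translation, yielding the canonical normalization $a_i=2$ on every facet (Lemma \ref{lem:class2} and Corollary \ref{cor:class}). Relatedly, you assert but do not verify that the soliton vector actually \emph{is} a critical point of $\mathcal F$; this requires the identity $\int_P x_j e^{-\la b_X,x\ra}\,dx=-\int_{\R^n}\partial_{y_j}e^{-\phi}\,dy=0$, whose justification needs the at-least-linear growth of the Legendre dual $\phi$ and the fact that $0\in\mathring{P}$ after normalization.

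Several smaller points also need repair. The Ding functional is convex but not strictly convex along linear geodesics; strict convexity would contradict your own conclusion that $u$ is unique only up to a linear function, and what is actually needed is convexity plus the Berman--Berndtsson rigidity statement that affineness along the geodesic forces the endpoints to differ by a linear function. Your schematic Monge--Amp\`ere equation has the wrong sign on $\phi$ and a spurious quadratic term: with $\omega=2\sqrt{-1}\partial\bar\partial\phi$ the term $\tfrac12 g$ contributes $-\phi$, not a quadratic polynomial, and the dual equation for $u$ is a nonlinear Monge--Amp\`ere-type equation, $u_ix_i-u-\log\det(u_{ij})=\la b_X,x\ra$, not a linear Abreu-type equation. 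Finally, to compare two shrinkers one must first identify both symplectic structures with a fixed model $(M_P,\omega_P)$ via the Lerman--Tolman classification (Theorem \ref{T204}), and pass from equality of the metrics on the dense $(\C^*)^n$-orbit to equality on all of $M$ using completeness; these steps are implicit in your sketch but are where the orbifold structure genuinely enters.
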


In the setting of Theorem \ref{thm:A}, if a K\"ahler Ricci shrinker $(M,g, J, f)$ exists, an important observation is that $(M^{2n}, \omega, \T^n)$, where $\omega$ is the corresponding K\"ahler form, is a symplectic toric orbifold with a Hamiltonian $\T^n$-action (cf. Proposition \ref{prop:kros} (ii)). Furthermore, the holomorphic Killing field $J\na f$ belongs to $\lt$, the Lie algebra of $\T^n$ (cf. Proposition \ref{prop:kros} (iii)). To ensure this, it is necessary to assume that the scalar curvature of $g$ is globally bounded below, a condition that automatically holds in the manifold case.

Symplectic toric orbifolds, like symplectic toric manifolds, have been well understood for compact cases. Lerman and Tolman \cite{LT97} demonstrated that a compact symplectic toric orbifold corresponds one-to-one with a labeled, rational, simple polytope (cf. Definition \ref{def:poly}), which is the image of the moment map. The local structure groups of the orbifold can be determined from the integers attached to the facets of the polytope. However, the noncompact case is far more complicated. Without additional conditions, the image of the moment map may not even be convex (cf. \cite{Pr94}). In our case, we prove that the image of the moment map of $(M^{2n}, \omega, \T^n)$ is also a proper, rational, simple polyhedron $P$ in $\lt^*$, with an integer attached to each facet. A key point is that the potential function $f$, which is a Hamiltonian function of $J \na f$, is proper and bounded below. Additionally, the boundedness of the scalar curvature ensures that the set of critical points of $f$ must be compact, implying that there are only finitely many fixed points of the $\T^n$-action. In fact, we establish a one-to-one correspondence between all such toric symplectic orbifolds and their associated labeled polyhedra; see Theorem \ref{T204} and Theorem \ref{T206}.

In particular, there exists a standard symplectic toric orbifold $(M_P, \omega_P ,J_P)$ from Delzant's construction, which is also a toric variety (see Definition \ref{def:tr}). On the one hand, there exists a $\T^n$-invariant symplectomorphism between $(M, \omega)$ and $(M_P, \omega_P)$. On the other hand, under our assumptions, the $\T^n$-action extends uniquely to a $(\C^*)^n$-action. By using the holomorphic slice theorem (cf. Theorem \ref{thm:hslice}), one can demonstrate that $(M, J, (\C^*)^n)$ is also a toric variety, biholomorphic to $(M_P, J_P)$ (cf. Theorem \ref{thm:to}). Moreover, the labeled polyhedron $P$ depends only on the class of $[\omega]$ in the orbifold de Rham cohomology $H^2_{\mathrm{dR}}(M)$; see Corollary \ref{cor:class}.

In our case, the labeled polyhedron $P$ is determined by the anti-canonical class. Using the renowned Abreu-Guillemin theory, the Ricci shrinker equation on the principal orbit of $(\C^*)^n$ can be transformed into a Monge-Amp\`ere-type equation on the interior of $P$, regarding the corresponding symplectic potential (cf. Proposition \ref{prop:symp}), with appropriate boundary conditions (cf. Theorem \ref{thm:bound}). The equation of the symplectic potential is determined, as the holomorphic Killing field $J\na f$ is independent of the metric $g$. This fact is well-known in the compact case, where it is established using the weighted Futaki invariant (cf. \cite[Theorem 1.4]{SZ11}). Following the methods in \cite{CDS24} and \cite{Ci22}, we show that $J \na f$ is the unique minimum of the weighted volume functional; see Definition \ref{def:wvf}.

Since the metric $g$ is assumed to be complete, the uniqueness of the K\"ahler Ricci shrinker in our case reduces to proving the uniqueness of the solution to the equation regarding the symplectic potential. This follows from the convexity and rigidity of the Ding functional along geodesics in the space of symplectic potentials \cite{BB13} \cite{Ci22}. The convexity ensures that the critical point, which corresponds to a solution, must be unique if it exists. Moreover, the rigidity implies that any two such solutions can differ only by the addition of a linear function. These facts together confirm the uniqueness of the K\"ahler Ricci shrinker, thereby completing the proof of Theorem \ref{thm:A}.

If we do not assume that the K\"ahler Ricci shrinker is $\T^n$-invariant, but instead impose the stronger assumption of bounded Ricci curvature, we have the following result, which is also known in the manifold case \cite[Theorem B]{Ci22}.

\begin{thm}
\label{thm:B}
Suppose $(M^{2n}, J, \T^n)$ is a complex orbifold with an effective and holomorphic $\T^n$-action. Then, up to biholomorphism, there is at most one K\"ahler Ricci shrinker $(M,g, J,f)$ with $J\na f \in \lt$ and bounded Ricci curvature.
\end{thm}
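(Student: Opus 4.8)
The plan is to deduce Theorem~\ref{thm:B} from Theorem~\ref{thm:A} by showing that, under the stronger hypothesis of bounded Ricci curvature, any Kähler Ricci shrinker $(M,g,J,f)$ with $J\na f \in \lt$ is automatically $\T^n$-invariant, after which Theorem~\ref{thm:A} applies directly. First I would observe that bounded Ricci curvature implies bounded scalar curvature, so the scalar-curvature hypothesis of Theorem~\ref{thm:A} is met. The crux is to upgrade the single holomorphic Killing field $J\na f \in \lt$ to the full $\T^n$-symmetry of the metric.

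The key mechanism is that $X := J\na f$ generates a one-parameter subgroup of $\T^n$, and I would argue that the metric $g$ is invariant under the \emph{closure} of the flow of $X$ in the isometry group. Since $X = J\na f$ is a Killing field (this is a standard consequence of the shrinker equation together with $J$-invariance of $g$), its flow consists of isometries, and the closure of $\{\exp(tX)\}$ in $\mathrm{Isom}(M,g)$ is a compact abelian torus $\T^k \subseteq \T^n$. The essential point is to show $k = n$, i.e.\ that the flow of the single vector field $X$ is dense in $\T^n$. Here is where bounded Ricci curvature enters decisively: by Proposition~\ref{prop:kros} the shrinker is a symplectic toric orbifold, $f$ is proper and bounded below (being a Hamiltonian for $X$), and its critical set is compact; the fixed-point set of $X$ coincides with $\mathrm{Crit}(f)$. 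Bounded Ricci curvature lets me control the geometry near infinity and rule out the degenerate situation in which $X$ spans only a proper subtorus. Concretely, if the closure of the flow of $X$ were a proper subtorus $\T^k$, the quotient geometry transverse to the $\T^k$-orbits would have to accommodate a noncompact direction along which the Ricci tensor fails to stay bounded, contradicting the hypothesis; this is precisely the argument carried out in the manifold case in \cite[Theorem B]{Ci22}, and I would adapt it to the orbifold setting using the local uniformizing charts.

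Once $g$ is shown to be invariant under $\T^n = \overline{\{\exp(tX)\}}$, the function $f$ is likewise $\T^n$-invariant (its gradient flow commutes with the isometric $\T^n$-action, and $f$ is constant on the compact orbits of a torus whose Lie algebra contains $X$), so $(M,g,J,f)$ is a $\T^n$-invariant Kähler Ricci shrinker with bounded scalar curvature. Applying Theorem~\ref{thm:A} then yields at most one such shrinker up to equivariant biholomorphism. Finally, I would note that an equivariant biholomorphism is in particular a biholomorphism, so uniqueness up to equivariant biholomorphism implies the weaker uniqueness up to biholomorphism asserted in Theorem~\ref{thm:B}, completing the reduction.

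The main obstacle I anticipate is the density step: proving that the flow of the single field $X = J\na f$ generates the full torus $\T^n$ rather than a proper subtorus. This requires genuinely using bounded Ricci curvature to analyze the asymptotic geometry of the shrinker, since the conclusion is false without some such hypothesis—a generic $X \in \lt$ need not be a topological generator of $\T^n$. Translating the manifold argument of \cite{Ci22} into the orbifold category, where one must work in local uniformizing charts near the singular strata and track the isotropy groups attached to the facets of the polyhedron $P$, is where the careful work will lie.
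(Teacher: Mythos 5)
There is a genuine gap, and it sits exactly where you flag the ``main obstacle'': the density step is not merely hard, it is false. Take the Gaussian shrinker on $\C^n$ with the standard $\T^n$-action and $f=|z|^2/4$: the Ricci curvature is identically zero, $J\na f\in\lt$, yet $X=J\na f$ generates only the diagonal circle, whose closure is a one-dimensional subtorus of $\T^n$. No curvature hypothesis can force a single element of $\lt$ to be a topological generator of $\T^n$, so the invariance of $g$ under $\overline{\{\exp(tX)\}}$ only ever buys you invariance under a subtorus $\T^k$ with $k$ possibly equal to $1$. Moreover, this is not the argument in \cite[Theorem B]{Ci22}; Cifarelli's proof (and the paper's) does not attempt to show that $g$ is already $\T^n$-invariant. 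Indeed the given $\T^n$-action is only assumed holomorphic, not isometric, and the conclusion of Theorem~\ref{thm:B} is correspondingly weakened to uniqueness up to (non-equivariant) biholomorphism --- a signal that one must \emph{move} the torus rather than prove the metric is invariant as it stands.

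The actual mechanism is as follows. Since every $Y\in\lt$ commutes with $X$ and $JX$, the holomorphic $\T^n$-action embeds $\T^n$ into $\mathrm{Aut}_0^X(M)$, the identity component of the automorphisms commuting with the flow of $JX$. The bounded Ricci curvature is used to prove a Matsushima-type decomposition $\mathfrak{aut}^{X}(M)=\g^{X}\oplus J\g^{X}$ (the orbifold version of \cite[Theorem 5.1]{CDS24}), which identifies the isometric subgroup $G_0^X$ as a maximal compact subgroup of $\mathrm{Aut}_0^X(M)$. Iwasawa's conjugacy theorem then provides $\varphi\in\mathrm{Aut}_0^X(M)$ with $\varphi\circ\T^n\circ\varphi^{-1}\le G_0^X$, so that $\varphi^*g$ is $\T^n$-invariant; only then does Theorem~\ref{thm:A} apply. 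Your proposal omits the Matsushima theorem entirely and misidentifies the role of the bounded Ricci curvature hypothesis (it enters through the integration-by-parts/elliptic arguments behind the Matsushima decomposition, not through asymptotic control of a transverse quotient geometry). The final reduction you describe --- once $\T^n$-invariance is in hand, invoke Theorem~\ref{thm:A} and note that equivariant uniqueness implies uniqueness up to biholomorphism --- is correct, but the bridge to $\T^n$-invariance needs to be rebuilt along the lines above.
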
 

The key element in the proof of Theorem \ref{thm:B} is a Matsushima-type theorem, originally proved for K\"ahler Ricci shrinkers on manifolds \cite[Theorem 5.1]{CDS24}. This theorem is the primary reason why the stronger assumption of bounded Ricci curvature is required. This result can be directly extended to the orbifold setting with nearly the same proof. With this in place, any K\"ahler Ricci shrinker in the context of Theorem \ref{thm:B} can be made $\T^n$-invariant after applying a biholomorphism of $M$. Therefore, the uniqueness of the shrinker follows from Theorem \ref{thm:A}.

The organization of this paper is as follows. In Section 2, we review fundamental concepts and properties of orbifolds. Additionally, we establish a one-to-one correspondence between a class of symplectic toric orbifolds and labeled polyhedra. We also revisit Delzant's construction and demonstrate that symplectic toric orbifolds in our case are toric varieties. In Section 3, we review the Abreu-Guillemin theory for K\"ahler metrics on toric orbifolds and present some key estimates for Ricci shrinkers on orbifolds. Finally, in Section 4, we prove our main theorems using the results and concepts developed in the preceding sections.
\\
\\

Acknowledgements: Yu Li is supported by YSBR-001, NSFC-12201597 and research funds from the University of Science and Technology of China and the Chinese Academy of Sciences. Both authors would like to thank Prof. Jiyuan Han for helpful discussions.

\section{Symplectic toric orbifolds}

\subsection{Geometry of orbifolds}

The concept of orbifolds was first introduced by Satake in \cite{Sa56} as $V$-manifolds. We begin by reviewing the fundamental definitions of orbifolds and the associated differential geometric concepts. The presentation of these definitions is primarily drawn from sources including \cite{Bor92} \cite{BMP03} \cite{C19} \cite{KL14} and \cite{LT97}.

\begin{defn}[Vector orbispace] 
A vector orbispace is a pair $(V, \Gamma)$, where $V$ is a vector space and $\Gamma$ is a finite group that acts linearly and effectively on $V$. A \emph{linear map} between vector orbispaces $(V, \Gamma)$ and $(V', \Gamma')$ consists of a linear map $T: V \to V'$ and a homomorphism $h: \Gamma \to \Gamma'$ such that for any $g \in \Gamma$ and $v \in V$, the relation $h(g)(Tv) = T(gv)$ holds. 
\end{defn}

\begin{defn}[Local chart] An $n$-dimensional local chart $(U_p, \tilde{U}_p, \Gamma_p, \pi_p)$ around a point $p$ in a topological space $X$ consists of: \begin{enumerate} \item An open neighborhood $U_p$ of $p$ in $X$; \item An open connected subset $\tilde{U}_p$ of $\mathbb{R}^n$, where $\Gamma_p$ is a finite group acting smoothly and effectively on $\tilde{U}_p$; \item A $\Gamma_p$-equivariant projection $\pi_p: \tilde{U}_p \to U_p$, where the action is trivial on $U_p$, inducing a homeomorphism $\tilde{U}_p/\Gamma_p \to U_p$. \end{enumerate} \end{defn}

By slight abuse of notation, a local chart $(U, \tilde{U}, \Gamma, \pi)$ is often denoted simply by $(\tilde{U}, \Gamma)$.

\begin{defn}[Smooth orbifolds] An $n$-dimensional smooth orbifold $M$ consists of a connected, Hausdorff, paracompact topological space $|M|$, known as the underlying space, along with a maximal collection of $n$-dimensional local charts $(\tilde U_i, \Gamma_i)$ such that: \begin{enumerate} \item The collection ${U_i}$ forms an open cover of $|M|$; \item For any $p \in U_1 \cap U_2$, there exists a local chart $U_3$ with $p \in U_3 \subset U_1 \cap U_2$ and embeddings $(\tilde{U}_3, \Gamma_3) \to (\tilde{U}_1, \Gamma_1)$ and $(\tilde{U}_3, \Gamma_3) \to (\tilde{U}_2, \Gamma_2)$. \end{enumerate} \end{defn}

An embedding $(\tilde{U}, \Gamma) \to (\tilde{U}', \Gamma')$ between two local charts is defined as a smooth embedding $\tilde{\phi}: \tilde{U} \to \tilde{U}'$, together with a group homomorphism $\bar{\phi}: \Gamma \to \Gamma'$ such that $\tilde{\phi}$ is $\bar{\phi}$-equivariant.

Given a smooth orbifold $M$ and any $p \in |M|$, a local chart $(U_p, \tilde{U}_p, \Gamma_p, \pi_p)$ around $p$ is called \emph{good} if $\pi_p^{-1}(p)$ consists of a single point $\tilde{p}$. In this case, $\Gamma_p$ is referred to as the \emph{structure group} at $p$. It is straightforward to see that a good local chart around $p$ can always be found, and the structure group at $p$ is independent of the choice of local chart. Without loss of generality, we will consider only good local charts. The tangent space $T_{\tilde{p}} \tilde{U}_p$ is called the \emph{uniformized tangent space} and is denoted by $\tilde{T}_p M$. The \emph{tangent space} at $p$, denoted by $T_p M$, is defined to be the vector orbispace $(\tilde{T}_p M, \Gamma_p)$, where the group action of $\Gamma_p$ on $\tilde{T}_p M$ is induced by the differential.

The \emph{regular part} $M_{\mathrm{reg}} \subset |M|$ consists of points with a trivial structure group. It is clear that $M_{\mathrm{reg}}$ forms a smooth manifold and constitutes an open, dense set of $|M|$. The \emph{singular part} of $M$ is defined as the set $|M|_{\mathrm{sing}}=|M| \backslash M_{\mathrm{reg}}$. We denote by $\Sigma_{\Gamma}$ the subset of $|M|$ consisting of points with the structure group $\Gamma$. This leads to the canonical stratification
\begin{equation}\label{eq:decom}
|M|=\coprod_{\alpha} \Sigma_{\alpha},
\end{equation}
where each $\Sigma_{\alpha}$, called a \emph{stratum}, is a connected component of some $\Sigma_{\Gamma}$.

Next, we recall the notion of smooth maps between orbifolds.
\begin{defn}[Smooth map]
Let $M$ and $N$ be smooth orbifolds. A smooth map $f:M \to N$ is a continuous map $|f|: |M| \to |N|$ that satisfies the following condition:

For any $x \in |M|$ there exist local charts $(U_x, \tilde U_x, \Gamma_x, \pi_x)$ around $x$ and $(V_{y}, \tilde V_{y}, \Gamma_{y}, \pi_{y})$ around $y=|f|(x)$ such that $f(U_x) \subset V_{y}$ and there exists a smooth local lift $\tilde{f}_x : \tilde{U}_x \rightarrow \tilde{V}_{y}$, along with a homomorphism $\bar{f}_x : \Gamma_x \rightarrow \Gamma_{y}$ making the following diagram commute:
\begin{equation*}
\begin{tikzcd}
\tilde{U}_x \arrow[r, "\tilde{f_x}"] \arrow[d, "\pi_x"] & \tilde{V}_{y} \arrow[d, "\pi_{y}"] \\
U_x \arrow[r, "|f|"] & V_{y} 
\end{tikzcd}
\end{equation*}
Additionally, $\tilde f_x$ is $\bar f_x$-equivariant, meaning that for any $t \in \Gamma_x$ and $z \in \tilde{U}_x$, we have $\tilde{f}_x(tz)=\bar f_x(t) \tilde f_x(z)$.

The map $f$ is called a diffeomorphism if it admits a smooth inverse. In this case, $\Gamma_x$ is isomorphic to $\Gamma_{|f|(x)}$.
\end{defn}

For a smooth map $f : M \rightarrow N$ and any $x \in |M|$ with $y=|f|(x)$, a local lift $\tilde{f}_x: (\tilde{U}_x, \Gamma_x, \pi_x) \rightarrow (\tilde{V}_{y}, \Gamma_{y}, \pi_{y})$ has an $\bar f_x$-equivariant differential $d \tilde{f}_x : T_{\tilde x} \tilde{U}_x \rightarrow T_{\tilde{y}} \tilde{V}_{y}$, which induces a linear map $d f_x : T_x M \rightarrow T_{y} N$. This map $df_x$ is called the \emph{differential} of $f$. We say that $f$ is an \emph{immersion} (resp. \emph{submersion}) at $x$ if the map $df_x$ is injective (resp. surjective). The map $f$ is called an immersion (resp. submersion) if $f$ is an immersion (resp. submersion) at each point of $|M|$.

\begin{defn}[Suborbifold]
An orbifold $S$ is a \emph{suborbifold} of a smooth orbifold $M$ if there exists an immersion $i : S \rightarrow M$ such that $|i|$ maps $|S|$ homeomorphically onto its image in $|M|$. Moreover, for each $x \in |S|$, there exists a local chart $(U_{|i|(x)},\tilde U_{|i|(x)},\Gamma_{|i|(x)}, \pi_{|i|(x)} )$ around $|i|(x)$ such that $\pi^{-1}_{|i|(x)}(|i|(S))$ is a submanifold of $\tilde U_{|i|(x)}$.
\end{defn}

Note that $\bar{i}_x : \Gamma_x \rightarrow \Gamma_{|i|(x)}$ is injective for each $x \in |S|$. Therefore, we identify a suborbifold with its image and regard the structure group of $S$ as a subgroup of the corresponding structure group of $M$. For instance, each stratum in the canonical decomposition \eqref{eq:decom} is a suborbifold (cf. \cite[Proposition 32]{Bor92}).

\begin{defn}[Vector orbibundle]
A rank-$m$ vector orbibundle $(\mathcal V, M, \rho)$ consists of:
\begin{enumerate}
\item $\mathcal V$ and $M$ are smooth orbifolds, and $\rho: \mathcal V \to M$ is a smooth map such that $|\rho|$ is surjective;
\item 
For any $x \in |M|$, there is a local chart $(U_x,\tilde U_x, \Gamma_x, \pi_x)$ around $x$ and a linear action of $\Gamma_x$ on $F$ such that $(\tilde U_x \times V, \Gamma_x)$, with diagonal action, forms a local chart of $\mathcal V$. Moreover, $\rho$ is given by the natural projection $(\tilde U_x \times V)/\Gamma_x \to \tilde U_x/\Gamma_x$.
Note that the fiber of the vector orbibundle over $x$ is the vector orbispace $(V,\Gamma_x)$.
\end{enumerate} 
\end{defn}

For instance, the \emph{tangent bundle} $TM$ of a smooth orbifold $M$ is a vector orbibundle which is locally diffeomorphic to $T \tilde U_x/\Gamma_x$ for local charts $(\tilde U_x, \Gamma_x)$ of $M$. Moreover, each fiber of the tangent bundle over $x$ is the tangent space $T_xM$. The concept of the general $(r,s)$-tensor bundle (denoted by $T^{r,s}M$) and the bundle of $i$-form (denoted by $\Lambda^i M$) can be similarly defined. For the general definition of an orbibundle, see \cite[Section 3.3]{C19}.

Next, we recall the definition of the orbifold fundamental group. First, given a regular point $x \in |M|$, a \emph{special curve} from $x$ is a continuous map $\gamma:[0,1] \to |M|$ such that $\gamma(0)=x$ and $\gamma(t)$ lies in $M_{\mathrm{reg}}$ for all but finitely many $t$. A \emph{special loop} at $x$ is a special loop $\gamma$ with $\gamma(0)=\gamma(1)=x$. Suppose that $(U, \Gamma)$ is a local chart and $\tilde \gamma:[a,b] \to \tilde U$ is a lift of $\gamma_{[a,b]}$, for some $[a,b] \subset [0,1]$. An \emph{elementary homotopy} between two special curves is a homotopy of $\tilde \gamma$ in $\tilde U$, relative to $\tilde \gamma(a)$ and $\tilde \gamma(b)$. A \emph{homotopy} of $\gamma$ is generated by elementary homotopies.

\begin{defn}[Orbifold fundamental group]
Let $M$ be a connected smooth orbifold with a fixed point $x \in M_{\mathrm{reg}}$. The orbifold fundamental group $\pi_1^{\mathrm{orb}}(M, x)$ is the set of homotopy classes of special loops at $x$ with the usual composition law.
\end{defn}

The orbifold fundamental group is independent of the base point $x$ up to isomorphism and is denoted by $\pi_1^{\mathrm{orb}}(M)$. It can be proved \cite[Proposition 13.2.4]{Th02} that any connected smooth orbifold has a universal cover $\rho:\tilde M \to M$, that is, $\rho$ is an orbibundle with discrete fibers. The group of deck transformations of $\rho$ is isomorphic to $\pi_1^{\mathrm{orb}}(M)$. Moreover, if the extra structure is ignored, there exists a natural epimorphism $\pi_1^{\mathrm{orb}}(M) \to \pi_1(|M|)$ (cf. \cite[Proposition 2.5]{BMP03}).

Denote all sections of $\Lambda^i M$ by $\Omega^i$. Then it is clear that there is a well-defined \emph{exterior derivative}
\begin{equation*}
d:\Omega^i \longrightarrow \Omega^{i+1}.
\end{equation*}
Then, we recall the de Rham cohomology of the orbifold.

\begin{defn}[Orbifold de Rham cohomology] 
Let $M$ be a connected smooth orbifold. The orbifold de Rham cohomology $H^*_{\mathrm{dR}}(M)$ is the cohomology groups of the complex
\begin{equation*}
\cdots \overset{d}{\longrightarrow} \Omega^{i-1} \overset{d}{\longrightarrow} \Omega^{i} \overset{d}{\longrightarrow} \Omega^{i+1} \overset{d}{\longrightarrow} \cdots
\end{equation*}
\end{defn}

Similar to the manifold case, we have the following de Rham theorem for orbifolds; see \cite[Theorem 1]{Sa56}.

\begin{thm}[De Rham theorem] \label{thm:der}
Let $M$ be a smooth orbifold. Then $H^*_{\mathrm{dR}}(M) \cong H^*(|M|,\R)$.
\end{thm}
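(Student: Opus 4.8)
The plan is to follow Satake's original sheaf-theoretic argument and deduce the isomorphism from the abstract de Rham theorem, by realizing the orbifold de Rham complex as a fine resolution of the constant sheaf $\underline{\R}$ on the underlying space $|M|$. Let $\underline{\Omega}^i$ denote the sheaf on $|M|$ whose sections over an open set $U$ are the orbifold $i$-forms on $U$; concretely, on each good local chart $(\tilde U_p, \Gamma_p)$ these are the $\Gamma_p$-invariant smooth $i$-forms on $\tilde U_p$. By construction $\Omega^i = \underline{\Omega}^i(|M|)$, and the exterior derivative $d$ induces maps of sheaves. I would first check that
$$0 \longrightarrow \underline{\R} \longrightarrow \underline{\Omega}^0 \overset{d}{\longrightarrow} \underline{\Omega}^1 \overset{d}{\longrightarrow} \cdots$$
is a resolution of $\underline{\R}$, where $\underline{\R} \to \underline{\Omega}^0$ is the inclusion of locally constant functions; exactness at $\underline{\Omega}^0$ is just the statement that a $d$-closed invariant function is locally constant, which reduces to connectedness of the charts.

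The heart of the matter is a local equivariant Poincaré lemma establishing exactness in positive degrees. Fix a good chart $(\tilde U_p, \Gamma_p)$ with $\tilde U_p \subset \R^n$ a $\Gamma_p$-invariant star-shaped neighborhood of the fixed point $\tilde p$, which exists because $\Gamma_p$ acts linearly and effectively, fixing the origin. Given a $\Gamma_p$-invariant closed $i$-form $\omega$ with $i \geq 1$, the standard radial homotopy operator on a star-shaped domain produces $\eta_0$ with $d\eta_0 = \omega$; averaging over the group, $\eta := \frac{1}{|\Gamma_p|}\sum_{g \in \Gamma_p} g^*\eta_0$ is $\Gamma_p$-invariant and satisfies $d\eta = \frac{1}{|\Gamma_p|}\sum_{g} g^*\omega = \omega$, using $g^*\omega = \omega$. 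Thus every closed invariant form is locally an exact invariant form, which is precisely exactness of the sheaf complex in degrees $\geq 1$.

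Next I would verify that each $\underline{\Omega}^i$ is a fine sheaf, so that the resolution is acyclic. This amounts to the existence of smooth orbifold partitions of unity subordinate to any open cover of $|M|$, which follows by the same averaging device: choose ordinary partitions of unity on the charts $\tilde U_p$ and average the bump functions over $\Gamma_p$ to obtain $\Gamma_p$-invariant functions that descend to $|M|$, and use paracompactness of $|M|$ to patch them together. Fine sheaves are acyclic for the global-section functor on a paracompact space, so the abstract de Rham theorem applies and yields $H^i_{\mathrm{dR}}(M) = H^i\big(\Gamma(|M|, \underline{\Omega}^\bullet)\big) \cong H^i(|M|, \underline{\R})$. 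Finally, on the paracompact Hausdorff space $|M|$ the sheaf cohomology of the constant sheaf agrees with singular cohomology, $H^i(|M|, \underline{\R}) \cong H^i(|M|, \R)$, completing the identification.

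The technically delicate point is the equivariant structure throughout: one must ensure that every construction, namely the Poincaré homotopy and the partition of unity, can be performed $\Gamma_p$-equivariantly, and that the results are compatible under the embeddings $(\tilde U_3, \Gamma_3) \to (\tilde U_1, \Gamma_1)$ defining the orbifold atlas, so that they genuinely assemble into sheaves and sheaf maps on $|M|$ rather than merely chartwise data. The finite-group averaging resolves equivariance at each chart, and the compatibility under embeddings is exactly what makes $\underline{\Omega}^\bullet$ a well-defined complex of sheaves. I expect the verification of this compatibility together with the fineness of $\underline{\Omega}^i$ to be the main bookkeeping effort, while the conceptual content is entirely carried by the averaging argument over the finite structure groups, after which the proof is formally identical to the smooth manifold case.
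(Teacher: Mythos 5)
Your argument is correct, and it is precisely the standard sheaf-theoretic proof that the paper itself invokes by citing Satake's Theorem~1 rather than proving the statement: realize $\underline{\Omega}^\bullet$ as a fine resolution of $\underline{\R}$ on $|M|$ via the averaged equivariant Poincar\'e lemma and orbifold partitions of unity, then apply the abstract de Rham theorem. The only minor points to tidy are that the good chart should first be linearized (e.g.\ by averaging a metric and using the exponential map) so that a $\Gamma_p$-invariant star-shaped neighborhood genuinely exists, and that identifying sheaf cohomology of $\underline{\R}$ with singular cohomology uses local contractibility of $|M|$, which holds because each $\tilde U_p/\Gamma_p$ is a quotient of a star-shaped set; neither affects the validity of the argument.
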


For our applications, we will consider orbifolds with extra structures.

\begin{defn}[Riemannian orbifold] \label{def:ro}
A Riemannian orbifold $(M,g)$ is a smooth orbifold $M$ with a smooth metric $g$, a symmetric $(0,2)$-tensor which is positive everywhere.
\end{defn}

By using the partition of unity (cf. \cite[Lemma 2.11]{KL14}) on the orbifold, it is clear that any smooth orbifold can be equipped with a smooth metric. A Riemannian orbifold $(M,g)$ defines a metric structure described as follows.

We say a curve $\gamma:[0,1] \to |M|$ is \emph{admissible} if $[0,1]$ can be decomposed into a countable number of subintervals $[t_i,t_{i+1}]$ so that $\gamma_{(t_i,t_{i+1})}$ is contained in a single stratum in \eqref{eq:decom}. Each admissible curve $\gamma$ has a well-defined length $L(\gamma)$ (cf. \cite[Theorem 38]{Bor92}). Given two points $x,y \in |M|$, the distance $d(x,y)$ is defined as
\begin{equation*}
d(x,y)=\inf \{L(\gamma) \mid \gamma \text{ is an admissible curve joining $x$ to $y$}\}.
\end{equation*}

Then $(|M|, d)$ becomes a length space. Furthermore, we say that the Riemannian orbifold $(M,g)$ is \emph{complete} if the metric space $(|M|, d)$ is complete. In this case, any two points in $|M|$ can be joined by a minimizing geodesic that realizes the distance $d$. Moreover, it follows from \cite[Theorem 3, P22]{Bor92} that if $\Gamma:[0,1] \to |M|$ is a minimizing geodesic, then for any stratum $\Sigma_{\alpha}$, either $\gamma \subset \Sigma_{\alpha}$ or $\gamma \cap \Sigma_{\alpha} \subset \{\gamma(0)\} \cup \{\gamma(1)\}$. In particular, this implies that $M_{\mathrm{reg}}$ is \emph{geodesically convex} in the sense that any $p,q \in M_{\mathrm{reg}}$ can be connected by a minimizing geodesic contained within $M_{\mathrm{reg}}$.

\begin{defn}[Complex orbifolds]
A \emph{complex} orbifold $(M,J)$ of complex dimension $n$ is a $2n$-dimensional smooth orbifold $M$ such that (i) each local chart $(\tilde U, \Gamma)$ satisfies $\tilde U \subset \C^n$ on which $\Gamma$ acts holomorphically; (ii) An embedding between two local charts $(\tilde U, \Gamma) \to (\tilde U', \Gamma')$ is holomorphic. The complex structure $J$ is an $(1,1)$-tensor induced by the complex structure of local charts.
\end{defn}

\begin{defn}[Symplectic/K\"ahler orbifold]
A \emph{symplectic} orbifold is a smooth orbifold $(M, \omega)$ with a closed non-degenerate $2$-form $\omega$. A \emph{K\"ahler} orbifold $(M,\omega, J)$ is both a symplectic and complex orbifold, where the complex structure $J$ is compatible with the symplectic form $\omega$ in the sense that the bilinear form $g(\cdot, \cdot)=\omega(\cdot, J\cdot)$ defines a Riemannian metric.
\end{defn}

Now, we recall the definition of the group action on orbifolds.
\begin{defn}[Group action] 
Let $G$ be a connected compact Lie group, and let $M$ be an orbifold. A smooth map $\mu: G \times M \rightarrow M$ is called a smooth action of $G$ on $M$ if $|\mu| : G \times |M| \rightarrow |M|$ is a group action.
\end{defn}

For a group $G$-action $\mu$ on $M$, $|\mu|(g,x)$ is simplified as $gx$ for $(g,x) \in G \times |M|$. 
Moreover, for a fixed $x \in |M|$, the isotropy group of $x$ and the orbit through $x$ are denoted by $G_x$ and $G(x)$, respectively. In the following, we will always assume, without loss of generality, that any group action is \emph{effective}, i.e., $\{g \in G \mid gx=x,\, \forall x \in |M|\}=\{1\}$.

Each orbit $G(x)$, as in the manifold case, is a manifold diffeomorphic to $G/G_x$. Moreover, we have the following slice theorem; see \cite[Lemma 2.7, Theorem 2.13]{GKRW17}.

\begin{prop}[Slice theorem] \label{prop:slice1}
For any $x\in |M|$, $G(x)$ is a suborbifold of $M$. Moreover, a $G$-invariant neighborhood of the orbit is equivariantly diffeomorphic to a neighborhood of the zero section in the associated orbibundle $G \times_{G_x} W / \Gamma_x$, where $W =T_{\tilde x} \tilde U_x /T_{\tilde x} \pi_x^{-1}\lc G(x) \rc$ for a local chart $(U_x, \tilde U_x, \Gamma_x, \pi_x)$.
\end{prop}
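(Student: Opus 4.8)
My plan is to transcribe the classical equivariant tubular neighborhood theorem into local charts, using a $G$-invariant metric to produce the slice. First I would produce a $G$-invariant Riemannian orbifold metric: starting from any orbifold metric $g_0$ (which exists by the partition of unity noted after Definition~\ref{def:ro}) and averaging over the Haar measure of the compact group $G$, the tensor $g = \int_G \mu_a^* g_0\, da$ is smooth and $G$-invariant. Fix a good local chart $(U_x, \tilde U_x, \Gamma_x, \pi_x)$ with $\pi_x^{-1}(x)=\{\tilde x\}$ and lift $g$ to $\tilde U_x$. Since $G_x$ fixes $x$, after replacing $U_x$ by a $G_x$-invariant neighborhood each element of $G_x$ lifts to a diffeomorphism of $\tilde U_x$ fixing $\tilde x$; such a lift is determined only up to composition with $\Gamma_x$, so what one really obtains is an action on $\tilde U_x$ of an extension $1 \to \Gamma_x \to \hat G_x \to G_x \to 1$, where the identity component $G_x^0$ lifts canonically by path lifting. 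This extension preserves the lifted $g$ and acts linearly on the uniformized tangent space $\tilde T_x M = T_{\tilde x}\tilde U_x$.

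Since $G(x)$ is a manifold diffeomorphic to $G/G_x$, its preimage $\pi_x^{-1}(G(x)\cap U_x)$ is a $\Gamma_x$-invariant submanifold of $\tilde U_x$ passing through $\tilde x$; this already exhibits $G(x)$ as a suborbifold, proving the first assertion. Using the lifted $g$ I would then take $W$ to be the $g$-orthogonal complement in $T_{\tilde x}\tilde U_x$ of the tangent space $T_{\tilde x}\pi_x^{-1}(G(x))$ to the lifted orbit, which is canonically identified with the quotient $T_{\tilde x}\tilde U_x / T_{\tilde x}\pi_x^{-1}\lc G(x) \rc$ appearing in the statement. Both $\Gamma_x$ and the induced $G_x$-action act linearly on $W$, and since $\Gamma_x$ is normal in $\hat G_x$ these two actions are compatible; this is precisely what makes the associated orbibundle $G \times_{G_x} W/\Gamma_x$ well defined as an orbifold.

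Finally, I would construct the equivariant diffeomorphism from the exponential map of $g$. Exponentiating normal vectors in the chart, $\tilde{\exp}_{\tilde x}: W \to \tilde U_x$ is $\hat G_x$-equivariant and, by the inverse function theorem applied $\Gamma_x$-equivariantly, is a diffeomorphism from a neighborhood of $0$ onto its image for $U_x$ small enough. Spreading this out over the orbit via the $G$-action gives a $G$-equivariant map from a neighborhood of the zero section of $G \times_{G_x} W/\Gamma_x$ onto a $G$-invariant neighborhood of $G(x)$, which then descends to a diffeomorphism of orbifolds. The main obstacle is the bookkeeping forced by the two finite/compact symmetries: one must verify that the lift of $G_x$ to the chart is controlled exactly by $\Gamma_x$, so that $\hat G_x$ is the correct object and the iterated quotient $G\times_{G_x}W/\Gamma_x$ is meaningful, and that $\tilde{\exp}_{\tilde x}$ is simultaneously equivariant for $\Gamma_x$ and for $G_x$, so that the resulting map is a genuine morphism of orbifolds rather than merely a homeomorphism of underlying spaces. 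Granting this compatibility, the remaining steps are a faithful copy of the manifold argument.
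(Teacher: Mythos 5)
Your argument is the standard equivariant tubular neighborhood construction (invariant metric by averaging, lift of $G_x$ to an extension $\hat G_x$ by $\Gamma_x$ acting on the chart, orthogonal slice $W$, equivariant exponential map spread over the orbit), and it is correct in outline; the delicate points you flag — that the lift of $G_x$ is controlled exactly by $\Gamma_x$ and that the exponential is simultaneously $\Gamma_x$- and $G_x$-equivariant — are indeed the only places where the orbifold setting differs from the manifold case. The paper itself gives no proof and simply cites \cite[Lemma 2.7, Theorem 2.13]{GKRW17}, which carries out essentially this same argument, so your proposal matches the intended route.
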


Here, the action of $G$ induces a representation of $G_x$ on the vector orbispace $(W,\Gamma_x)$. More precisely, the representation is given by a group homomorphism $G_x \to \mathrm{GL}(W/\Gamma_x):=N(\Gamma_x)/\Gamma_x$, where $N(\Gamma_x)$ is the normalizer of $\Gamma_x$ in $\mathrm{GL}(W)$.

Next, we recall

\begin{defn}[Hamiltonian group action]
A $G$-action on a symplectic orbifold $(M,\omega)$ is \emph{symplectic} if any element of $G$ preserves $\omega$. A symplectic $G$-action is \emph{Hamiltonian} if there exists a $G$-equivariant smooth map $\Phi: M \to \g^*$ such that 
\begin{align*}
-\iota(\xi_M) \omega = d \langle\Phi ,\xi\rangle \textit{ for all } \xi \in \g,
\end{align*}
where $\xi_M$ is the fundamental vector field induced by $\xi$. The map $\Phi$ is called the \emph{moment map}. 
\end{defn}

Let $(V, \Gamma, \omega)$ be a symplectic vector orbispace, where $(V,\Gamma)$ is a vector orbispace, and $\omega$ is a symplectic bilinear form on $V$ which is preserved by $\Gamma$. Let $\mathrm{Sp}(V/ \Gamma)$ denote the group of symplectic isomorphisms of $V / \Gamma$, that is, the group $N(\Gamma)/ \Gamma$, where $N(\Gamma)$ is the normalizer of $\Gamma$ in $\mathrm{Sp}(V)$. A \emph{symplectic representation} of a Lie group $H$ on a symplectic vector orbispace $V / \Gamma$ is a group homomorphism $\rho : H \rightarrow \mathrm{Sp}(V / \Gamma)$.
\begin{exmp} \label{ex:mod1}
Let $\rho: H \rightarrow \mathrm{Sp}(V / \Gamma)$ be a symplectic representation of $H$ on a symplectic vector orbispace $(V / \Gamma,\omega)$. Then the action of $H$ on $(V / \Gamma,\omega)$ is Hamiltonian with a moment map $\Phi_{V / \Gamma} : V / \Gamma \rightarrow \h^{*}$, which vanishes at $0$, given by the formula
\begin{align*}
\langle \Phi_{V / \Gamma} (v),\xi \rangle =-\frac{ \omega (\xi_M(v) , v)}{2} \textit{ for all } \xi \in \h \textit{ and } v \in V / \Gamma.
\end{align*}
where $\xi_M(v)$ is the value at $v$ of $\xi_M$. 
\end{exmp}
Any symplectic representation has a natural lift; see \cite[Lemma 3.1]{LT97}.

\begin{lem}\label{lem:lift1}
Let $\rho : H \rightarrow \mathrm{Sp}(V/\Gamma)$ be a symplectic representation of a Lie group $H$ on a symplectic vector orbispace $V / \Gamma$. Then there exists a group $\hat H$, which is an extension of $H$ by $\Gamma$, and a representation $\hat{\rho} : \hat{H} \rightarrow N(\Gamma) \subset \mathrm{Sp}(V)$ so that the following diagram is exact and commutes:
\begin{equation}
\label{D101}
\begin{tikzcd}
1 \arrow[r] & \Gamma \arrow[d, no head, double] \arrow[r] & \hat{H} \arrow[r, "\pi"] \arrow[d, "\hat{\rho}"] & H \arrow[r] \arrow[d, "\rho"] & 1 \\
1 \arrow[r] & \Gamma \arrow[r] & N(\Gamma) \arrow[r] & \mathrm{Sp}(V / \Gamma) \arrow[r] & 1
\end{tikzcd} 
\end{equation}
Moreover, $\hat{\rho}$ is faithfulI if $\rho$ is faithful.
\end{lem}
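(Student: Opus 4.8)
The statement is exactly the assertion that the top extension $1 \to \Gamma \to \hat H \to H \to 1$ is the \emph{pullback} of the bottom extension along $\rho$, so the natural approach is the fiber-product construction. Writing $q : N(\Gamma) \to N(\Gamma)/\Gamma = \mathrm{Sp}(V/\Gamma)$ for the quotient homomorphism, I would set
\[
\hat H := \bigl\{ (h, A) \in H \times N(\Gamma) \; : \; \rho(h) = q(A) \bigr\},
\]
with group law inherited coordinatewise from $H \times N(\Gamma)$; this is a subgroup since $\rho$ and $q$ are homomorphisms, so the defining relation $\rho(h)=q(A)$ is preserved under products and inverses. I take $\pi : \hat H \to H$ and $\hat\rho : \hat H \to N(\Gamma)$ to be the restrictions of the two coordinate projections, and the inclusion $\Gamma \hookrightarrow \hat H$ to be $\gamma \mapsto (1,\gamma)$, which lands in $\hat H$ because $q(\gamma)=1=\rho(1)$. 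By construction $\hat\rho$ maps into $N(\Gamma) \subset \mathrm{Sp}(V)$, as required.

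It then remains to verify the three claims. For the Lie-group structure: since $\Gamma$ is finite, $N(\Gamma)/\Gamma$ is a Lie group and $q$ is a smooth covering, so the condition $\rho(h)=q(A)$ is closed and $\hat H$ is a closed subgroup of the Lie group $H \times N(\Gamma)$, hence itself a Lie group by the closed-subgroup theorem. For exactness of the top row: $\pi$ is surjective because $q$ is surjective (given $h$, choose any $A \in q^{-1}(\rho(h))$), while $\ker\pi = \{(1,A) : q(A)=1\} = \{1\}\times\Gamma$, which is precisely the image of the inclusion $\gamma\mapsto(1,\gamma)$; thus $\hat H$ is an extension of $H$ by $\Gamma$. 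For commutativity of \eqref{D101}: on $\Gamma$ the map $\hat\rho$ sends $(1,\gamma)$ to $\gamma$, so it restricts to the inclusion $\Gamma \hookrightarrow N(\Gamma)$, and the identity $q\circ\hat\rho = \rho\circ\pi$ is exactly the defining relation of the fiber product.

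Finally, for the faithfulness statement, suppose $\rho$ is faithful and $\hat\rho(h,A)=A=1$. Then $\rho(h)=q(A)=q(1)=1$, so $h=1$ by injectivity of $\rho$, giving $(h,A)=(1,1)$; hence $\ker\hat\rho$ is trivial and $\hat\rho$ is faithful. I do not anticipate a genuine obstacle here: the pullback is the universal object realizing a prescribed homomorphism to the quotient, and every exactness and commutativity assertion is formal. The only point deserving a word of care is verifying that $\hat H$ carries a Lie-group structure compatible with all the maps, which is precisely where one uses that quotienting by the \emph{finite} group $\Gamma$ keeps $q$ a smooth covering and leaves everything inside the Lie category.
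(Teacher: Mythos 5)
Your proposal is correct and is essentially the paper's own proof: the paper simply declares $\hat H$ to be the fiber product of $N(\Gamma)$ and $H$ over $\mathrm{Sp}(V/\Gamma)$ and leaves the verifications implicit, whereas you have carried out exactly those verifications (exactness, commutativity, the Lie-group structure via the closed-subgroup theorem, and faithfulness), all correctly.
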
 

\begin{proof}
One can define $\hat H$ as the fiber product of $N(\Gamma)$ and $H$ over $\mathrm{Sp}(V / \Gamma)$.
\end{proof}

If the group is compact and abelian with maximal dimension, we have the following result from \cite[Lemma 6.1]{LT97}.

\begin{lem}\label{lem:lift2}
Let $\rho : H \rightarrow \mathrm{Sp}(V/\Gamma)$ be a faithful symplectic representation of a compact abelian Lie group $H$ on a symplectic vector orbispace $V / \Gamma$ such that $\mathrm{dim}\,V=2 \mathrm{dim}\,H$. Then $H$ and $\hat H$ (from Lemma \ref{lem:lift1}) are torus. 
\end{lem}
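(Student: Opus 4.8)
The plan is to analyze the image of the faithful representation $\hat\rho$ inside a maximal compact subgroup of $\mathrm{Sp}(V)$ and to exploit the self-centralizing property of maximal tori. Write $n=\dim H=\tfrac{1}{2}\dim V$. Since $\Gamma$ is finite, the exact sequence in \eqref{D101} gives $\dim\hat H=\dim H=n$, and $\hat H$ is compact, being an extension of the compact group $H$ by the finite group $\Gamma$. Because $\hat\rho(\hat H)$ is then a compact subgroup of $\mathrm{Sp}(V)\cong\mathrm{Sp}(2n,\R)$, after conjugating (equivalently, after choosing a $\hat\rho(\hat H)$-compatible complex structure and inner product on $V$) we may assume $\hat\rho(\hat H)\subset U(n)$, a maximal compact subgroup. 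As $\hat\rho$ is faithful, it identifies $\hat H$ with an $n$-dimensional compact subgroup of $U(n)$.

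First I would show that the identity component $\hat H_0$ is a maximal torus of $U(n)$. Since $H$ is abelian, $\pi\bigl([\hat H_0,\hat H_0]\bigr)=[\,\pi(\hat H_0),\pi(\hat H_0)\,]$ is trivial, so the derived subgroup $[\hat H_0,\hat H_0]$ is contained in $\ker\pi=\Gamma$. But the derived subgroup of a connected Lie group is connected, and a connected subgroup of the finite group $\Gamma$ is trivial; hence $\hat H_0$ is abelian. A connected compact abelian Lie group is a torus, so $\hat H_0\cong T^n$, and $\hat\rho(\hat H_0)$ is an $n$-dimensional torus in $U(n)$. Since $\mathrm{rank}\,U(n)=n$, every torus in $U(n)$ has dimension at most $n$, so $\hat\rho(\hat H_0)$ is in fact a maximal torus.

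The crux is to prove that $\hat H$ is connected. Fix $\hat g\in\hat H$ and consider the map $\hat H_0\to\hat H$, $\hat t\mapsto\hat g\hat t\hat g^{-1}\hat t^{-1}$. Because $H$ is abelian, $\pi$ sends each such commutator to $1$, so this map takes values in the finite group $\Gamma$; it is continuous, sends the identity to the identity, and $\hat H_0$ is connected, so it is identically trivial. Thus every element of $\hat H$ centralizes $\hat H_0$, i.e. $\hat\rho(\hat H)$ lies in the centralizer of the maximal torus $\hat\rho(\hat H_0)$ in $U(n)$. Since a maximal torus of the compact connected group $U(n)$ is self-centralizing, this centralizer equals $\hat\rho(\hat H_0)$, and the faithfulness of $\hat\rho$ forces $\hat H=\hat H_0$. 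Hence $\hat H$ is a torus. Finally $H\cong\hat H/\Gamma$ is the quotient of a torus by a finite subgroup; it is connected (being a continuous image of $\hat H$), compact, and abelian, hence also a torus.

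I expect the only genuine obstacle to be the connectedness step. The dimension count and the embedding into $U(n)$ are routine, and the self-centralizing property of a maximal torus is standard; the point is to rule out extra components of $\hat H$, which is exactly where the hypothesis that $H$ is abelian (forcing all commutators into the discrete group $\Gamma$) combines with self-centralizing to do the work. One minor technical care is needed in invoking compatibility of the chosen complex structure with $\omega$, so that $\hat\rho(\hat H)$ genuinely lands in a conjugate of $U(n)$ rather than merely in some maximal compact subgroup.
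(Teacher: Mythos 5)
Your proof is correct. Note first that the paper does not actually prove this lemma: it is quoted verbatim from Lerman--Tolman \cite[Lemma 6.1]{LT97} with no argument supplied, so there is no in-paper proof to compare against. Your argument is a valid self-contained substitute. The chain of reasoning holds up at every step: $\hat H$ is compact (a closed subgroup of $N(\Gamma)\times H$, proper over the compact $H$ with finite fibres $\Gamma$), averaging gives an $\hat\rho(\hat H)$-invariant compatible complex structure so that $\hat\rho(\hat H)\subset U(n)$; the commutator trick (commutators land in the discrete group $\Gamma$, hence vanish on the connected $\hat H_0$) shows $\hat H_0$ is abelian and that all of $\hat H$ centralizes $\hat H_0$; the hypothesis $\dim V=2\dim H$ is used exactly where it must be, to make $\hat\rho(\hat H_0)$ a full-rank, hence maximal, torus of $U(n)$; and self-centralizing of maximal tori plus faithfulness of $\hat\rho$ kills the extra components. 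For comparison, the Lerman--Tolman argument reaches the same conclusion through the weight decomposition of $V\otimes\C$ under $\hat H_0$: faithfulness together with the dimension count forces the weights to form a basis of the weight lattice, identifying $\hat\rho(\hat H_0)$ with the standard diagonal torus, after which the connectedness of $\hat H$ is again forced. The two routes are essentially equivalent in content (the "weights form a basis" statement is the infinitesimal form of "the image is a maximal torus"); yours has the small advantage of isolating the connectedness step explicitly, which is indeed the only genuinely nontrivial point.
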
 

Let $(M, \omega,G, \Phi)$ be a symplectic orbifold with a Hamiltonian $G$-action and moment map $\Phi$. For $x \in |M|$ such that $\Phi(x)$ is fixed by the coadjoint action $\mathrm{Ad}_G^*$, we consider a local chart $(U_x, \tilde U_x, \Gamma_x, \pi_x)$. Then $\omega$ is lifted to a smooth $2$-form (still denoted by $\omega$) on $\tilde U_x$. Next, we define $\lc T_{\tilde x} \pi_x^{-1}(G(x)) \rc^{\omega}$ to be the symplectic complement of $T_{\tilde x} \pi_x^{-1}(G(x))$ and form the symplectic vector orbispace $(V,\Gamma_x, \omega)$, where
\begin{align*}
V = \lc T_{\tilde x} \pi_x^{-1}(G(x)) \rc^{\omega} / T_{\tilde x} \pi_x^{-1}(G(x)).
\end{align*} 
It is clear that by our assumption on $x$, $T_{\tilde x} \pi_x^{-1}(G(x)) \subset \lc T_{\tilde x} \pi_x^{-1}(G(x)) \rc^{\omega}$. Moreover, the induced linear action of $G_x$ on $V/\Gamma_x$ is a symplectic representation of $G$ on $V/\Gamma_x$, which is called the symplectic slice representation.
Now, we can state the symplectic slice theorem from \cite[Lemma 3.5]{LT97}, which was well-known in the manifold case \cite{Ma84, Ma85, GS84}.
\begin{thm}[Symplectic slice theorem]
\label{thm:sslice}
With the above assumptions, for any $G_x$-invariant splitting $\g^*=\g_x^* \oplus \mathrm{ann}(\g_x)$, where $\mathrm{ann}(\g_x)$ denotes the annihilator of $\g_x$ in $\g^*$, there exists a $G$-invariant symplectic form $\omega_Y$ on the orbifold $Y = G \times_{G_x} (\mathrm{ann}(\g_x)\times V / \Gamma_x)$ such that 
\begin{enumerate}
\item a neighborhood of $G(x)$ in M is equivariantly symplectomorphic to a neighborhood of the zero section in Y, and
\item the action of G on $(Y,\omega_Y)$ is Hamiltonian with a moment map $\Phi_Y:Y \rightarrow \g^{*}$ given by
\begin{align*}
\Phi_Y ( [g,\eta,v]) = \Phi(x)+\mathrm{Ad}^*_g \lc -\eta + \Phi_{V/ \Gamma_x}(v) \rc,
\end{align*}
where $\Phi_{V / \Gamma_x} : V / \Gamma_x \rightarrow \g_x^{*}$ is the moment map for the slice representation of $G_x$ on $V/\Gamma_x$ as in Example \ref{ex:mod1}. 
\end{enumerate}
\end{thm}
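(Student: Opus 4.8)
The plan is to establish this as an orbifold version of the Marle--Guillemin--Sternberg equivariant normal form: extract the smooth model from the differentiable slice theorem (Proposition~\ref{prop:slice1}) and then upgrade the diffeomorphism to a symplectomorphism by an equivariant Moser argument. All constructions are carried out $\Gamma_x$-equivariantly in the uniformizing chart $\tilde U_x$ and then descended to the orbifold quotient. The hypothesis that $\Phi(x)$ is fixed by $\mathrm{Ad}^*_G$ enters precisely at the linear stage: differentiating the equivariance of $\Phi$ gives $\eta_M \langle \Phi,\xi\rangle = \langle \Phi(x),[\eta,\xi]\rangle$ at $\tilde x$, and $\langle \Phi(x),[\eta,\xi]\rangle = \pm\langle \mathrm{ad}^*_{\eta}\Phi(x),\xi\rangle = 0$ by the coadjoint-fixed assumption. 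Hence, writing $T := T_{\tilde x}\pi_x^{-1}(G(x))$ for the lifted orbit direction, the defining relation $-\iota(\xi_M)\omega = d\langle \Phi,\xi\rangle$ shows $\omega(\eta_M,\xi_M)(\tilde x)=0$, so $T\subseteq T^{\omega}$ and $V=T^{\omega}/T$ genuinely carries the reduced symplectic form.

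With this in hand I would fix the linear model. Since $\ker d\Phi_{\tilde x}=T^{\omega}$ and $\mathrm{im}\,d\Phi_{\tilde x}=\mathrm{ann}(\g_x)$, I choose a $G_x$- and $\Gamma_x$-invariant decomposition $T_{\tilde x}\tilde U_x = T\oplus V_0\oplus T'$ with $T^{\omega}=T\oplus V_0$, $V_0\cong V$ symplectic, and $d\Phi_{\tilde x}$ restricting to an isomorphism $T'\xrightarrow{\sim}\mathrm{ann}(\g_x)$. This identifies the normal representation $W=T_{\tilde x}\tilde U_x/T$ appearing in Proposition~\ref{prop:slice1} with $\mathrm{ann}(\g_x)\times V$ as a $G_x$-representation, so that the smooth model $G\times_{G_x}W/\Gamma_x$ is identified with $Y=G\times_{G_x}(\mathrm{ann}(\g_x)\times V/\Gamma_x)$, and the differentiable slice theorem supplies an equivariant diffeomorphism of a neighborhood of $G(x)$ onto a neighborhood of the zero section of $Y$.

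Next I would construct the model form $\omega_Y$ by symplectic reduction. In the uniformizing picture, equip $T^*G\times V$ with $\Omega_{\mathrm{can}}\oplus\omega$, let $\hat G_x$ (the extension of $G_x$ by $\Gamma_x$ from Lemma~\ref{lem:lift1}) act diagonally by the cotangent lift of right translation together with the lifted slice representation, and reduce at level $0$ of its moment map. Using the left trivialization $T^*G\cong G\times\g^*$, the zero level set is $\{(g,\nu,v):\nu|_{\g_x}=\pm\Phi_{V/\Gamma_x}(v)\}$, whose $\hat G_x$-quotient is exactly $Y$, with reduced form the desired $G$-invariant $\omega_Y$. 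The residual left $G$-action is Hamiltonian, and since the $G$-moment map on $T^*G$ is $(g,\nu)\mapsto\mathrm{Ad}^*_g\nu$, writing $\nu=-\eta+\Phi_{V/\Gamma_x}(v)$ via the chosen splitting $\g^*=\g_x^*\oplus\mathrm{ann}(\g_x)$ yields $\Phi_Y([g,\eta,v])=\Phi(x)+\mathrm{Ad}^*_g(-\eta+\Phi_{V/\Gamma_x}(v))$ after the constant shift placing the zero section at level $\Phi(x)$, with $\Phi_{V/\Gamma_x}$ the slice moment map of Example~\ref{ex:mod1}; nondegeneracy near the zero section follows by continuity from the linear model.

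Finally I would run the equivariant Moser step. Pulling $\omega$ back through the slice diffeomorphism gives a second $G$-invariant symplectic form $\omega_0$ near the zero section of $Y$ that, by the previous two steps, agrees with $\omega_Y$ along the entire zero section $G(x)$. Working in uniformizing charts and averaging over $G$ and $\Gamma_x$, the relative Poincar\'e lemma produces a $G$- and $\Gamma_x$-invariant primitive of $\omega_Y-\omega_0$ vanishing to second order along the zero section, whose associated time-dependent Moser vector field integrates, after shrinking, to an equivariant symplectomorphism $\psi$ fixing $G(x)$ with $\psi^*\omega_Y=\omega_0$; descending to the quotient gives statement (i). Arranging (as in the manifold case) that the primitive is $G$-basic forces the isotopy to preserve the moment map, so that $\Phi_Y\circ\psi$ equals the chart expression of $\Phi$, giving (ii). I expect this last step to be the main obstacle: carrying out the isotopy simultaneously $G$- and $\Gamma_x$-equivariantly near the possibly singular orbit, keeping the interpolating vector field tangent to the orbifold strata and integrable, and ensuring $\psi$ intertwines the two moment maps. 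The linear identification and the reduction construction of $\omega_Y$ are comparatively routine.
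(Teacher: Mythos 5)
Your proposal follows essentially the same route as the paper: the model $(Y,\omega_Y)$ is built as the symplectic reduction $(T^*G\times V/\Gamma_x)\sslash G_x$ with the zero level set identified with $G\times\mathrm{ann}(\g_x)\times V/\Gamma_x$, the smooth identification comes from Proposition~\ref{prop:slice1}, and the upgrade to a symplectomorphism is the equivariant normal-form step. The only difference is that where you unwind the final step as an explicit equivariant Moser argument, the paper simply invokes the $G$-relative Darboux theorem of Bates--Lerman (whose proof is that Moser argument), so your outline is correct and matches the paper's.
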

In fact, $(Y, \omega_Y)$ is obtained via symplectic reduction $(T^* G \times V /\Gamma_x) \sslash G_x$, where the action of $G_x$ on $T^*G=G \times \g$ is on the right. Specifically, the zero level set in $T^* G \times V/\Gamma_x$ of the moment map consists of $(g, -\Phi_{V/\Gamma_x}(v)+h,v)$, where $g \in G$, $h \in \mathrm{ann}(\mathfrak g_x)$ and $v \in V/\Gamma_x$. Consequently, the level set is isomorphic to $G \times \mathrm{ann}(\mathfrak g_x) \times V/\Gamma_x$. Thus, Proposition \ref{thm:sslice} follows from Proposition \ref{prop:slice1} and the $G$-relative Darboux theorem \cite[Theorem 6]{BL97} with the necessary adaptations for the orbifold setting. It is important to note that the identification in the above result depends on the $G_x$-equivariant splitting of $\g^*$.
In our application, we require the following concept, where the group is a maximal torus.
\begin{defn}[Symplectic toric orbifold] \label{def:sto}
A \emph{symplectic toric orbifold} $(M^{2n}, \omega, \T^n, \Phi)$ is a symplectic orbifold $(M^{2n}, \omega)$ with a Hamiltonian $\T^n$-action and moment map $\Phi$.
\end{defn}
We denote the Lie algebra of $\T^n$ by $\lt$. For a symplectic toric orbifold $(M^{2n}, \omega, \T^n, \Phi)$ and any $x \in |M|$, we set $H:=\T_x$ with Lie algebra $\h$ and lattice $\mathfrak i$. Since the symplectic slice representation of $H$ on $V/\Gamma_x$ is faithful and $2\mathrm{dim}\,\hat \h=\mathrm{dim}\, V$, it follows from Lemma \ref{lem:lift2} that both $H$ and its lift $\hat H$ are tori. Next, we choose a subtorus $K$ with Lie algebra $\mathfrak k$ so that $\lt=\mathfrak k \times \h$. Therefore, we can identify $\mathrm{ann}(\h)=\mathfrak k^*$ and hence $\lt^*=\mathfrak k^* \times \h^*$. Consequently, we have the identification 
\begin{align*}
\T^n \times_{H} (\mathfrak k^* \times V / \Gamma_x)=T^* K \times V / \Gamma_x,
\end{align*}
and the following result is immediate from the symplectic slice theorem \ref{thm:sslice}.
\begin{cor}\label{cor:slice1}
Let $(M^{2n}, \omega, \T^n, \Phi)$ be a symplectic toric orbifold. Then, for any $x \in |M|$, there exists a $\T$-invariant neighborhood $U$ of $\T(x)$ such that $U$ is equivariantly symplectomorphic to a neighborhood of the $K \times \{0\}$ in
\begin{align*}
T^* K \times V / \Gamma_x,
\end{align*}
equipped with the standard symplectic form, where $K$ is a subtorus with $\T^n=\T_x \times K$. Moreover, the moment map restricted to $U$ is
\begin{align*}
\Phi ((k,\eta,v)) = \Phi(x) - \eta + \Phi_{V/\Gamma_x}(v),
\end{align*}
where $(k,\eta) \in T^*K=K \times \mathfrak k^*$ and $v \in V/\Gamma_x$.
\end{cor}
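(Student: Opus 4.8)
The plan is to deduce the statement directly from the symplectic slice theorem (Theorem~\ref{thm:sslice}), exploiting the two features of the abelian case that make it collapse: the coadjoint action is trivial, and the complementary subtorus $K$ splits off cleanly. First I would check the hypothesis of Theorem~\ref{thm:sslice}. Since $G=\T^n$ is abelian, its coadjoint action $\mathrm{Ad}^*_G$ on $\lt^*$ is trivial, so \emph{every} value $\Phi(x)$ is automatically a fixed point; hence the slice theorem applies at an arbitrary $x\in|M|$, with $G_x=H=\T_x$. As recalled before the statement, Lemma~\ref{lem:lift2} guarantees that $H$ is a subtorus, and I would fix the complementary subtorus $K$ with $\T^n=K\times H$ and $\lt=\mathfrak k\oplus\h$, so that the canonical (and, by abelianness, $H$-invariant) splitting $\lt^*=\h^*\oplus\mathrm{ann}(\h)$ identifies $\mathrm{ann}(\h)=\mathfrak k^*$.

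Next I would apply Theorem~\ref{thm:sslice} with this splitting. It produces a $\T^n$-invariant symplectic form $\omega_Y$ on
\begin{align*}
Y=\T^n\times_H(\mathfrak k^*\times V/\Gamma_x)
\end{align*}
together with an equivariant symplectomorphism from a $\T^n$-invariant neighborhood $U$ of $\T(x)$ onto a neighborhood of the zero section of $Y$. The key algebraic step is to trivialize this associated bundle. Writing $\T^n=K\times H$ and letting $H=G_x$ act on the $H$-factor by right translation, on $V/\Gamma_x$ by the slice representation $\rho$, and trivially on $\mathfrak k^*$ (again by abelianness), the map
\begin{align*}
[(k,h_0),(\eta,v)]\longmapsto (k,\eta,\rho(h_0)v)
\end{align*}
is a well-defined diffeomorphism $Y\xrightarrow{\;\sim\;}T^*K\times V/\Gamma_x$, with $T^*K=K\times\mathfrak k^*$, intertwining the residual $\T^n=K\times H$-action with translation on the $K$-factor and $\rho$ on $V/\Gamma_x$. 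The zero section of $Y$ corresponds exactly to $K\times\{0\}$, i.e. to the orbit $\T(x)\cong\T^n/H\cong K$. Under this identification $\omega_Y$ becomes the standard product form, which is precisely the content of the reduction description $Y=(T^*\T^n\times V/\Gamma_x)\sslash H$ recorded after Theorem~\ref{thm:sslice}: cotangent reduction of $T^*\T^n$ by the right $H$-translation yields $T^*(\T^n/H)=T^*K$ with its canonical form, while the $V/\Gamma_x$-factor retains its linear symplectic form.

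Finally, I would transport the moment map. Under the diffeomorphism above the point $(k,\eta,v)$ is represented by $[(k,e),(\eta,v)]$, and Theorem~\ref{thm:sslice}(ii) gives
\begin{align*}
\Phi((k,\eta,v))=\Phi(x)+\mathrm{Ad}^*_{(k,e)}\bigl(-\eta+\Phi_{V/\Gamma_x}(v)\bigr)=\Phi(x)-\eta+\Phi_{V/\Gamma_x}(v),
\end{align*}
where the second equality again uses triviality of the coadjoint action, and $\Phi_{V/\Gamma_x}$ is the slice moment map of Example~\ref{ex:mod1}. This is exactly the asserted formula. The only point demanding genuine care is the trivialization of the associated bundle together with the verification that $\omega_Y$ coincides with the standard product form; once the cotangent-reduction identification $Y\cong T^*K\times V/\Gamma_x$ is in hand, the remainder is a direct specialization of Theorem~\ref{thm:sslice} to the abelian setting.
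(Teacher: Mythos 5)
Your proposal is correct and follows essentially the same route as the paper: apply Theorem \ref{thm:sslice} (whose fixed-point hypothesis is vacuous since $\T^n$ is abelian), use Lemma \ref{lem:lift2} to see $H=\T_x$ is a subtorus, choose a complementary subtorus $K$, and identify $\T^n\times_H(\mathfrak k^*\times V/\Gamma_x)$ with $T^*K\times V/\Gamma_x$, after which the moment map formula specializes by triviality of $\mathrm{Ad}^*$. The paper treats this identification as immediate, whereas you spell out the bundle trivialization and the cotangent-reduction check on $\omega_Y$; that is only a matter of added detail, not a different argument.
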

If a symplectic toric orbifold $(M^{2n}, \omega, \T^n, \Phi)$ has a $\T^n$-invariant compatible complex structure $J$. it is not always possible for the $\T^n$-action to extend to a holomorphic $(\C^*)^n$-action; see \cite[Example 1]{Ci22}. The key issue is that for some $\xi \in \lt$, the vector field $J \xi_M$ may not be complete. Note that this problem does not arise if $M$ is compact. When the extension is possible, we define the following:
\begin{defn}[K\"ahler toric orbifold] \label{def:kto}
A \emph{K\"ahler toric orbifold} $(M^{2n}, \omega, J, (\C^*)^n, \Phi)$ is a K\"ahler orbifold $(M^{2n}, \omega, J)$ with a holomorphic action of $(\C^*)^n$ such that its restriction to $\T^n$ is a Hamiltonian action with moment map $\Phi$.
\end{defn}
For a K\"ahler toric orbifold $(M^{2n}, \omega, J, (\C^*)^n, \Phi)$ and any $x \in |M|$, we set as above $H=\T_x$ with complexification $H_{\C}$. Additionally, we choose a complementary subtorus $K$ with complexification $K_{\C}$. For later application, we prove the following holomorphic slice theorem, which essentially follows from \cite[Theorem 1.12, Theorem 1.23]{Sj95}. We will sketch and modify the proof to accommodate the orbifold setting.
\begin{thm}[Holomorphic slice theorem]
\label{thm:hslice}
Let $(M^{2n}, \omega, J, (\C^*)^n, \Phi)$ be a K\"ahler toric orbifold. Then, for any $x \in |M|$, there exists a complex vector orbispace $(V, \Gamma_x)$ with a faithful complex representation of $H^{\C}$ such that there exists a $(\C^*)^n$-equivariant biholomorphic map $\psi: K_{\C} \times V/\Gamma_x$ onto a $(\C^*)^n$-invariant open neighborhood of $x$.
\end{thm}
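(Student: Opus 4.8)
The plan is to reduce the statement to a $\Gamma_x$-equivariant assertion on a uniformizing chart, to produce there a holomorphic slice by linearizing the complexified isotropy action, and finally to match the resulting model with the symplectic one from Theorem \ref{thm:sslice}. Fix a good chart $(U_x,\tilde U_x,\Gamma_x,\pi_x)$ with $\tilde U_x\subset\C^n$ and $\pi_x(\tilde x)=x$, and recall from Lemma \ref{lem:lift1} that $H=\T_x$ lifts to a torus $\hat H$ acting on $\tilde U_x$ and sitting in an extension of $H$ by $\Gamma_x$. Since the $(\C^*)^n$-action is holomorphic, the stabilizer of $x$ is a closed complex subgroup of $(\C^*)^n$ whose maximal compact subgroup is $H$; hence, up to the finite group recorded by $\Gamma_x$, it equals $H_{\C}$. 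Writing $(\C^*)^n=H_{\C}\times K_{\C}$ for the chosen complementary subtorus, I conclude that the orbit $(\C^*)^n\cdot x=K_{\C}\cdot x$ is biholomorphic to $K_{\C}$, and the whole task becomes the construction of a $\Gamma_x$-equivariant holomorphic tubular model around it.

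First I would linearize the isotropy. As $\hat H$ is a compact torus fixing $\tilde x$ and acting by biholomorphisms, Bochner's linearization theorem furnishes holomorphic coordinates centred at $\tilde x$ in which $\hat H$ acts by a linear representation; by analytic continuation from the totally real subgroup $\hat H\subset\hat H_{\C}$, its complexification $\hat H_{\C}$ then acts by the complexified linear representation in the same coordinates. Decomposing $\C^n=\bigoplus_\chi W_\chi$ into $\hat H_{\C}$-weight spaces, the subspace $T_{\tilde x}(K_{\C}\cdot\tilde x)$ is a sub-sum of weight spaces, because $K_{\C}$ and $H_{\C}$ commute and therefore $\hat H$ preserves the orbit. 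I let $V$ be the complementary sum of weight spaces; it is a complex vector orbispace $(V,\Gamma_x)$ carrying a faithful complex representation of $H_{\C}$, faithfulness being inherited from the faithful symplectic slice representation recorded before Corollary \ref{cor:slice1}.

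With $\tilde x$ as the origin in these coordinates, I would define $\psi(k,v)=k\cdot\pi_x(v)$ for $k\in K_{\C}$ and $v$ in a neighbourhood of the origin in $V\subset\tilde U_x$, and then pass to the $\Gamma_x$-quotient. Its differential at $(e,0)$ is the isomorphism $\mathfrak k_{\C}\oplus V\to T_{\tilde x}(K_{\C}\cdot\tilde x)\oplus V=T_{\tilde x}\tilde U_x$, so $\psi$ is a local biholomorphism there, and $K_{\C}$-equivariance spreads this property along $K_{\C}\times\{0\}$. Full equivariance is immediate: $K_{\C}$ acts on the first factor, while for $h\in H_{\C}$ one has $h\cdot\psi(k,v)=k\cdot\pi_x(\hat h\, v)=\psi(k,hv)$ since $h$ fixes the origin, commutes with $k$, and acts linearly on $V$; this identifies the domain with $(\C^*)^n\times_{H_{\C}}V/\Gamma_x$, which trivializes to $K_{\C}\times V/\Gamma_x$ because $(\C^*)^n=K_{\C}\times H_{\C}$. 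The image $\psi(K_{\C}\times V/\Gamma_x)$ is then automatically open and $(\C^*)^n$-invariant.

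The principal obstacle is upgrading the local biholomorphism near $K_{\C}\times\{0\}$ to a genuine biholomorphism onto an invariant neighbourhood, that is, proving global injectivity of $\psi$ along the possibly noncompact and a priori non-closed orbit $K_{\C}\cdot x$. This is exactly where Bochner linearization alone is insufficient and where I would import Sjamaar's argument \cite{Sj95}: the K\"ahler metric together with the moment map $\Phi$ controls the complexified orbit, forcing it to be closed inside a suitable invariant neighbourhood and matching the holomorphic slice $V$ with the symplectic slice of Theorem \ref{thm:sslice} under the diffeomorphism $T^*K\cong K_{\C}$; this prevents the transverse model from wrapping back onto itself. The remaining points — that the linearization, the choice of $V$, and the map $\psi$ can all be carried out $\Gamma_x$-equivariantly so as to descend to the orbifold quotient — are routine bookkeeping once the chart-level statement is in place.
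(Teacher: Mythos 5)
Your proposal follows essentially the same route as the paper's proof: lift the isotropy $H=\T_x$ to a torus $\hat H$ acting on the uniformizing chart via Lemma \ref{lem:lift1}, linearize that action (the paper does this by averaging the chart map over $H$ rather than quoting Bochner, but it is the same device), take $V$ to be an $\hat H_{\C}$-invariant complement of the tangent space to the complex orbit, and invoke Sjamaar's theorem for the global equivariant extension and injectivity. The only step you leave implicit that the paper makes explicit is that the $H_{\C}$-saturation of a small neighbourhood of $0$ in $V$ is all of $V$ (because the faithful representation of $H_{\C}\cong(\C^*)^m$ on $V\cong\C^m$ has $m$ independent weights), which is exactly what upgrades your map from $K_{\C}\times(\text{a neighbourhood of }0)$ to the claimed domain $K_{\C}\times V/\Gamma_x$.
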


\begin{proof}
We set $W=\tilde{T}_x M$, the uniformized tangent space at $x$. It is clear that $(\C^*)^n_x=H_{\C}$ and $H_{\C}$ induces a complete representation $\rho: H_{\C} \to \mathrm{GL}(W/\Gamma_x,\C)$, which restricts to $H$ as a unitary representation $H \to \mathrm{U}(W/\Gamma_x)$. It follows from Lemma \ref{lem:lift1} that $H$ lifts to a torus $\hat H \to \mathrm{U}(W)$ with complexification $\hat \rho: \hat H_{\C} \to \mathrm{GL}(W,\C)$.

The lift of the tangent space to the complex orbit $(\C^*)^n(x)$ forms a complex subspace of $W$, and we denote its orthogonal complement by $V$. It is clear that $(V, \Gamma_x)$ is a complex vector orbispace and the representation $\rho$ restricts to $H_{\C}$ as $\rho: H_{\C} \to \mathrm{GL}(V/\Gamma_x,\C)$. In particular, $V$ is $\hat H_{\C}$-invariant. 

Let $(\tilde U, \Gamma_x)$ be a holomorphic local chart around $x$, where $\tilde U \subset \C^n$. For the orbifold $U=\tilde U/\Gamma_x$, we choose a biholomorphic map $\phi: U \to M$ with $\phi(0)=x$ and $d\phi_0=\mathrm{id}_{\C^n}$. By shrinking $\tilde U$ if possible, we assume $\tilde U$ is $\hat H$-invariant. Then we set $U'=\phi(U)$ and let $\psi: U' \to U$ be the inverse of $\phi$. After averaging over $H$ and further shrinking $U'$ if necessary, we may assume that both $\psi$ and $\phi$ are $H$-invariant.

Next, we set $B':=V \cap \tilde U$, $B=B'/\Gamma_x$ and $S=H_{\C}(B)$. Then, one can prove that if $\tilde U$ is sufficiently small, the map $\phi: B \to M$ can be uniquely extended to a $(\C^*)^n$-equivariant map from $(\C^*)^n \times_{H_{\C}} S$ onto an open set of $M$ (see \cite[Theorem 1.12]{Sj95} for details).

Since the action of $H_{\C}$ on $V$ is effective with $\mathrm{dim}_{\C}\, V=\mathrm{dim}_{\C}\, H_{\C}$, it is clear that $S=W$. Additionally, since $(\C^*)^n/H_{\C}=K_{\C}$, the conclusion follows.
\end{proof}

\subsection{Symplectic toric orbifolds and labeled polyhedra}
In this section, we consider a symplectic toric orbifold $(M^{2n}, \omega,\T^n, \Phi)$ (cf. Definition \eqref{def:sto}). In the case of manifolds, Delzant \cite{De88} proved that compact toric manifolds correspond one-to-one with smooth and simple polytopes through the moment map (cf. Definition \ref{def:poly}). Later, this result was generalized by Lerman and Tolman \cite{LT97} to compact toric orbifolds, establishing a connection with rational and simple polytopes. We will prove a similar correspondence for a class of symplectic toric orbifolds.
First, we recall the following definitions. Throughout, we assume $\lt$ to be the Lie algebra of $\T^n$ with lattice $\mathfrak l$, $\lt^*$ to be its dual, and $\mathfrak l^*$ the dual lattice.
\begin{defn} \label{def:poly}
Let $\lt$ be the Lie algebra of $\T^n$ with lattice $\mathfrak l$, and $\lt^*$ be its dual and $\mathfrak l^*$ dual lattice.
\begin{enumerate}
\item \emph{(Polyhedron)} A \emph{polyhedron} is a finite intersection of affine half spaces $H^+_{n_i, a_i} := \{ x \in \mathfrak{t}^* \; | \; \langle n_i , x \rangle +a_i \ge 0\}$ with $n_i \in \lt, a_i \in \R$. A \emph{face} of a polyhedron $P$ is a polyhedron $P\cap H_{n, a}$, where $H_{n,a}:= \{ x \in \mathfrak{t}^* \; | \; \langle n , x \rangle +a =0\}$ is an affine hyperplane such that $P \subset H^+_{n,a}$. Faces of dimension $n-1$, $1$ and $0$ are called \emph{facet}, \emph{edge} and \emph{vertex}, respectively.

\item \emph{(Polytope)} A \emph{polytope} is a bounded polyhedron. Equivalently, a polytope is the convex hull of finite vertices.
\item \emph{(Polyhedral cone)} A \emph{polyhedral cone} is a finite intersection of $H^+_{n_i, 0}$. Equivalently, a polyhedral cone is the convex hull of finite rays from $0$. A polyhedral cone is called \emph{rational} if each edge is generated by an element in $\mathfrak l$. A polyhedral cone is called \emph{proper} if it does not contain a line.

\item \emph{(Asymptotic cone)} For a polyhedron $P=\cap H^+_{n_i,a_i}$, its asymptotic cone, denoted by $C(P)$, is a polyhedral cone $C(P):=\cap H^+_{n_i,0}$.
\item \emph{(Proper/Rational/Simple)} A polyhedron $P$ is \emph{proper} if its asymptotic cone $C(P)$ is proper. $P$ is \emph{rational} if, for each vertex $x \in P$, the polyhedral cone $C_v:=\mathrm{Cone}(\R_+(P-v))$ is rational. $P$ is called \emph{simple} if there are exactly $n$ edges at any vertex, and the generators of these edges form a basis of $\lt^*$ over $\R$.
\end{enumerate}
\end{defn}

For a polyhedron, we have the following structure result (cf. \cite[Theorem 1.2]{Z07}).

\begin{lem}\label{lem:mink}
Any polyhedron $P$ can be decomposed as
\begin{align*}
P=\mathrm{Conv}(\{v_i\})+C(P),
\end{align*}
where $\{v_i\}$ is the set of vertices of $P$, and sum is the Minkowski sum.
\end{lem}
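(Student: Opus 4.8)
The plan is to prove the two inclusions separately. First I note that, since a set of vertices $\{v_i\}$ is invoked, we work in the situation where $P$ possesses at least one vertex; equivalently $C(P)$ contains no line. This is automatic for the proper polyhedra considered in this paper, and is the only case in which the decomposition is meaningful. The structural facts I would use throughout are: every nonempty face $F$ of $P$ is again a polyhedron with $\dim F < \dim P$ whenever $F$ is proper, its vertices are vertices of $P$, and its asymptotic cone satisfies $C(F) \subseteq C(P)$; and that $C(P) = \{ y \in \lt^* : \langle n_i, y \rangle \ge 0 \text{ for all } i \}$ is precisely the recession cone of $P$, i.e. the set of directions $y$ with $x + ty \in P$ for every $x \in P$ and $t \ge 0$.

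The inclusion $\mathrm{Conv}(\{v_i\}) + C(P) \subseteq P$ is the easy direction. Each $v_i \in P$ and $P$ is convex, so $\mathrm{Conv}(\{v_i\}) \subseteq P$; and for $x \in P$, $y \in C(P)$ one checks directly that $\langle n_i, x + y \rangle + a_i = (\langle n_i, x\rangle + a_i) + \langle n_i, y \rangle \ge 0$ for all $i$, so $x + y \in P$. Hence $\mathrm{Conv}(\{v_i\}) + C(P) \subseteq P + C(P) = P$.

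For the reverse inclusion $P \subseteq \mathrm{Conv}(\{v_i\}) + C(P)$, I would argue by induction on $\dim P$. When $\dim P = 0$, $P$ is a single vertex and there is nothing to prove. For the inductive step, fix $x \in P$ and a vertex $v$ of $P$ (which exists since $P$ is pointed). If $x - v \in C(P)$, then $x = v + (x-v) \in \{v\} + C(P)$ and we are done. Otherwise $x-v$ is not a recession direction, so the ray $\{x + t(x - v) : t \ge 0\}$ must leave $P$ at some finite parameter $t^{*} > 0$; let $y := x + t^{*}(x-v)$ be the exit point, where some constraint $\langle n_j, \cdot \rangle + a_j \ge 0$ first becomes an equality. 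Then $y$ lies in the proper face $F = P \cap H_{n_j, a_j}$ with $\dim F < \dim P$, and $x$ is the convex combination $x = \tfrac{1}{1+t^{*}} y + \tfrac{t^{*}}{1+t^{*}} v$. By the induction hypothesis applied to $F$, $y \in \mathrm{Conv}(\{\text{vertices of } F\}) + C(F) \subseteq \mathrm{Conv}(\{v_i\}) + C(P)$; since $v$ is a vertex and the right-hand side is convex, the convex combination $x$ also lies in it.

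The main obstacle is the reverse inclusion, and within it the dichotomy step: one must justify both that a non-recession ray genuinely exits $P$ in finite time and that its exit point lands in a strictly lower-dimensional face, so that the induction can close. This rests on the identification of $C(P)$ with the recession cone together with the facial structure of polyhedra; I would also need the standing assumption that $P$ is pointed to guarantee the existence of the initial vertex $v$ and to ensure that the faces arising in the induction remain proper.
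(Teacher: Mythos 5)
Your proof is correct, and it is genuinely different from what the paper does: the paper gives no argument at all, quoting the decomposition directly from Ziegler's \emph{Lectures on Polytopes} (the Minkowski--Weyl representation theorem), whose textbook proofs typically run through Fourier--Motzkin elimination or the homogenization/cone-lifting trick. Your route is a self-contained induction on $\dim P$: the inclusion $\mathrm{Conv}(\{v_i\})+C(P)\subseteq P$ is immediate from convexity and the identification of $C(P)$ with the recession cone, and for the reverse inclusion you shoot the ray from a vertex $v$ through $x$, either landing in $v+C(P)$ or exiting $P$ at a point $y$ of a proper face $F$, then writing $x$ as a convex combination of $v$ and $y$ and invoking the inductive hypothesis on $F$. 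The supporting facts you cite --- that $C(P)$ equals the recession cone independently of the chosen half-space presentation, that vertices of a face are vertices of $P$ with $C(F)\subseteq C(P)$, and that the exit constraint, having strictly negative slope along the ray, cannot be identically tight on $P$ and therefore cuts out a face of strictly smaller dimension --- are all routine and correctly deployed. Two small remarks. You are right to restrict to pointed $P$: as literally stated (``any polyhedron'') the lemma fails when $P$ contains a line, since then $P$ has no vertices and the right-hand side is empty; the paper only ever applies it to proper polyhedra, where $C(P)$ and hence $P$ are line-free. And the exit parameter $t^{*}$ can in fact be $0$ (when $x$ already sits on a facet whose defining functional decreases along the ray), in which case $y=x$ and the convex-combination step degenerates; the induction still closes because $x$ then lies in the proper face $F$ directly, so this is only a cosmetic imprecision.
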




For a proper, rational, simple polyhedron $P$, being simple implies that $P$ must be of dimension $n$. Therefore, it has a representation
\begin{align} \label{eq:rep}
P=\bigcap_{i=1}^N H^+_{n_i, a_i}, 
\end{align}
so that each $P \cap H_{n_i, a_i}$ is a facet. Moreover, being rational means that each $n_i$ can be chosen to be a primitive element of the lattice $\mathfrak l$; that is, there is no integer $k1$ such that $n_i/k \in \mathfrak l$. Consequently, the representation \eqref{eq:rep} is unique. Being proper indicates that $P$ itself does not contain an affine line. Moreover, the dual polyhedral cone $C'(P):=\{v \in \lt \mid \la v, w \ra \ge 0,\, \forall w \in C(P)\}$ has a nonempty interior so that there exists $b \in \lt$ such that the function $\la b,\cdot \ra$ on $P$ is proper and bounded below (cf. \cite[Appendix]{PW94}).

Now, we have the following definition.
\begin{defn} \label{def:orb1}
A symplectic toric orbifold $(M^{2n}, \omega,\T^n, \Phi)$ satisfies $(\dagger)$ if
\begin{enumerate}[label=(\alph*)]
\item $\T^n$ has finite fixed points.

\item There exists $b \in \lt$ such that the function $\la \Phi, b \ra$ on $M$ is proper and bounded below. In particular, it implies that $\Phi$ itself is proper.
\end{enumerate}
\end{defn}

Note that any compact symplectic toric orbifold satisfies $(\dagger)$. In this case, Lerman and Tolman proved the following result \cite[Theorem 5.1, Theorem 5.2]{LT97}, which is a generalization of the celebrated Atiyah-Guillemin-Sternberg convexity theorem for Hamiltonian torus actions on compact symplectic toric manifolds \cite{AT82}\cite{GS82}.

\begin{thm}
\label{thm:convex1}
Let $(M^{2n}, \omega,\T^n, \Phi)$ be a compact symplectic toric orbifold. Then, the image of $\Phi$ is a rational and simple polytope, which is the convex hull of the image of points in $M$ fixed by $\T^n$. Moreover, each fiber $\Phi^{-1}(a)$ is connected.
\end{thm}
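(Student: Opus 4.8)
The plan is to run the standard Atiyah--Guillemin--Sternberg Morse-theoretic argument, transplanted into the orbifold category using the local normal form supplied by Corollary \ref{cor:slice1}. Throughout, for $\xi \in \lt$ I write $\Phi^{\xi} := \la \Phi, \xi \ra$, and I fix a $\T^n$-invariant compatible complex structure $J$ (obtained by averaging over $\T^n$), with associated metric $g(\cdot,\cdot)=\omega(\cdot,J\cdot)$, so that the gradient flow of $\Phi^{\xi}$ is available. \textbf{Step 1 (Morse--Bott structure).} First I would show that each $\Phi^{\xi}$ is a Morse--Bott function on $M$ whose critical set is the fixed-point set of the subtorus generated by $\xi$, and whose critical submanifolds all have even index and coindex. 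The critical points of $\Phi^{\xi}$ are precisely the zeros of the fundamental field $\xi_M$; lifting to a good chart and invoking Corollary \ref{cor:slice1}, the action linearizes and $V/\Gamma_x$ splits into two-dimensional $\xi$-weight spaces, so the Hessian of $\Phi^{\xi}$ is block-diagonal with $2\times 2$ blocks of a fixed sign per weight. Hence the negative eigenspace is a complex, thus even-real-dimensional, subspace, and passage to the quotient by $\Gamma_x$ does not alter these parities.

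\textbf{Step 2 (Connectedness of fibers).} The heart of the argument is to prove that every fiber $\Phi^{-1}(a)$ is connected. I would choose $\xi$ with rationally independent components, so that $\Phi^{\xi}$ has finitely many critical values and its critical submanifolds are the $\T^n$-fixed components. Because every critical submanifold other than the global maximum and minimum has both index and coindex at least $2$, the sublevel sets $\{\Phi^{\xi}\le c\}$ and the regular level sets remain connected as $c$ crosses a critical value. Iterating over the coordinates of $\xi$, and using that the gradient flow preserves the orbit-type stratification \eqref{eq:decom}, one deduces connectedness of $\Phi^{-1}(a)$. The subtlety is purely orbifold-theoretic: one must check that orbifold Morse--Bott theory delivers the same connectivity conclusions, which holds because the local models are quotients of the standard ones and the flow descends to the quotient.

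\textbf{Step 3 (Convexity, polytope, rationality, simplicity).} By Corollary \ref{cor:slice1}, for any $x$ a neighborhood of $\Phi(x)$ in the image equals $\Phi(x)+C_x$, where $C_x$ is the rational cone spanned by the weights of the symplectic slice representation of $\T_x$ on $V/\Gamma_x$; hence the image is locally convex and locally polyhedral. Combining this local convexity with Step 2 via the usual bootstrap---the set of image points joinable to a fixed one by a straight segment lying in the image is open and closed---shows that $\Phi(M)$ is convex. Since $M$ is compact, $\Phi(M)$ is a compact, convex, locally polyhedral set, i.e.\ a polytope, and its extreme points are exactly the images of $\T^n$-fixed points, where $\Phi^{\xi}$ attains an extremum, so it is their convex hull. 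For rationality and simplicity I would examine a fixed point $p$: the isotropy representation on $\tilde T_p M$ decomposes into $n$ complex weight lines with weights $\alpha_1,\dots,\alpha_n$ in the weight lattice $\mathfrak l^*$, the local cone $C_p$ is generated by these up to sign, and thus $\Phi(p)$ is a vertex with exactly $n$ incident edges whose primitive generators span $\lt^*$ over $\R$. This gives simplicity, while rationality is immediate from $\alpha_i \in \mathfrak l^*$.

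The main obstacle I anticipate is Step 2: transplanting the connectedness-of-fibers argument faithfully into the orbifold setting. One must ensure that orbifold Morse--Bott theory, applied across the singular strata, still forces regular level sets to remain connected whenever indices and coindices are at least $2$, and that the fixed-point sets of the various subtori organizing the critical data are themselves well-behaved suborbifolds on which the induced structures are standard. Once this connectivity is secured, the remaining assertions in Step 3 follow essentially formally from the local normal form of Corollary \ref{cor:slice1}.
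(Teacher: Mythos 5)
Your proposal is essentially correct, but you should be aware that the paper does not prove Theorem \ref{thm:convex1} at all: it is quoted directly from Lerman--Tolman \cite[Theorems 5.1, 5.2]{LT97}, and your Morse-theoretic sketch is in fact the route taken in that cited source (the classical Atiyah/Guillemin--Sternberg argument transplanted to orbifolds via the local normal form). Where the authors of this paper \emph{do} prove a convexity statement --- the generalization to noncompact orbifolds satisfying $(\dagger)$, Theorem \ref{thm:image}, which specializes to recover Theorem \ref{thm:convex1} in the compact case --- they take a genuinely different route: they use Corollary \ref{cor:slice1} only to establish the \emph{local} statements (the image is locally an open neighborhood of $\Phi(x)$ in $\Phi(x)+C_x$, with locally connected fibers), and then invoke the local-global principle of Hilgert--Neeb--Plank \cite[Theorem 3.19]{HNP94} to globalize openness, convexity, and fiber connectedness in one stroke, with properness of $\Phi$ as the only global input. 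That approach buys two things: it avoids having to set up Morse--Bott theory across orbifold strata, and it extends immediately to the noncompact setting the paper actually needs, where your Step 2 would require extra care (properness of $\Phi^{\xi}$ and control of critical sets at infinity). Conversely, your approach is more self-contained and yields the extra topological information (even Betti numbers, perfection of $\Phi^{\xi}$) as a byproduct.

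One place where your sketch is thinner than it should be: in Step 2 the assertion that ``every critical submanifold other than the global maximum and minimum has both index and coindex at least $2$'' presupposes that $\Phi^{\xi}$ has a \emph{unique} local minimum component and a unique local maximum component, which is itself part of what must be proved (via connectedness of $M$ and the absence of index-$1$ critical manifolds, i.e.\ Atiyah's Lemma 2.2), and the passage from connectedness of level sets of a single $\Phi^{\xi}$ to connectedness of fibers of the full map $\Phi$ is the inductive argument over subtori rather than a mere ``iteration over the coordinates of $\xi$.'' These are standard but they are the real content of the connectedness step, so they deserve more than the one sentence you give them.
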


For a general symplectic toric orbifold, even in the manifold cases, it is possible that the image of the moment map is not convex (cf. \cite[Counterexample 0.1]{Pr94}). Next, we prove the following result, which is a generalization of Theorem \ref{thm:convex1}.

\begin{thm}
\label{thm:image}
Let $(M^{2n}, \omega,\T^n, \Phi)$ be a symplectic toric orbifold with $(\dagger)$. Then the following assertions hold:
\begin{enumerate}[label=(\roman*)]
\item $\Phi(M)$ is a proper, rational, simple polyhedron.
\item $\Phi: M \rightarrow \Phi(M)$ is an open mapping.
\item The inverse images of points in $\Phi(M)$ are connected.
\item $x \in M$ is fixed by $\T^n$ iff $\Phi(x)$ is a vertex of $\Phi(M)$.
\end{enumerate}
\end{thm}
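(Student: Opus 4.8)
The plan is to treat $\Phi$ as a \emph{proper} map to $\lt^*$ and to deduce all four assertions from a local-to-global convexity principle, feeding in the local normal form of Corollary~\ref{cor:slice1}; this is the noncompact, orbifold analogue of the Atiyah--Guillemin--Sternberg argument \cite{AT82}\cite{GS82} behind Theorem~\ref{thm:convex1}. First I would verify that (b) forces $\Phi$ itself to be proper: for compact $C\subset\lt^*$ the functional $\langle\cdot,b\rangle$ is bounded on $C$, say $|\langle\cdot,b\rangle|\le R$ there, so $\Phi^{-1}(C)$ is a closed subset of the compact set $\langle\Phi,b\rangle^{-1}([-R,R])$ and hence compact. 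This is the only use of (b), substituting for the compactness of $M$ in the classical proof.

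Next I would extract the local structure. Fixing $x\in|M|$ and writing $\T^n=\T_x\times K$, $H:=\T_x$, Corollary~\ref{cor:slice1} gives $\Phi(k,\eta,v)=\Phi(x)-\eta+\Phi_{V/\Gamma_x}(v)$ on a $\T^n$-invariant neighborhood. The slice term is the moment map of a faithful linear $H$-action on $V/\Gamma_x$ with $\dim V=2\dim H$, so its image is the cone spanned by the primitive rational weights of the representation; faithfulness and half-dimensionality make these weights a basis of $\h^*$, so this is a rational simplicial cone $\sigma_x$ of dimension $\dim H$, onto which $\Phi_{V/\Gamma_x}$ is open with connected (single-orbit) fibers. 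Under $\lt^*=\mathfrak{k}^*\times\h^*$ the local image is therefore the convex rational polyhedral cone $\Phi(x)+(\mathfrak{k}^*\times\sigma_x)$, and $\Phi$ is open onto it with connected fibers. Thus at every point $\Phi$ is locally open, locally convex with rational polyhedral image, and locally fiber-connected, which are precisely the hypotheses needed to globalize.

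With $\Phi$ proper and these local models in place, I would invoke the local-to-global convexity principle (in the spirit of \cite{LT97}, with the noncompact adaptations of \cite{Pr94}, carried to the orbifold category through Corollary~\ref{cor:slice1}) to conclude that $\Phi(M)$ is convex, that each local cone is a genuine neighborhood of $\Phi(x)$ in $\Phi(M)$, that $\Phi$ is open onto $\Phi(M)$, and that all fibers are connected; this gives (ii) and (iii). For (iv), $\Phi(x)$ is a vertex iff its local cone $\Phi(x)+(\mathfrak{k}^*\times\sigma_x)$ contains no line, i.e. iff $\mathfrak{k}^*=0$, i.e. iff $K=\{0\}$ and $x$ is $\T^n$-fixed. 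To obtain (i) I would upgrade ``convex and locally polyhedral'' to ``proper, rational, simple polyhedron'': there are finitely many vertices by (iv) and (a); simplicity and rationality come from the $n$-dimensional simplicial rational cones $\sigma_x$ at the vertices; properness holds because $\langle\cdot,b\rangle$ is proper and bounded below on $\Phi(M)$, forcing the asymptotic cone $C(\Phi(M))$ to contain no line; and Lemma~\ref{lem:mink} then presents $\Phi(M)=\mathrm{Conv}(\{v_i\})+C(\Phi(M))$.

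The main obstacle is the global step: passing from the purely local convexity and fiber-connectedness to their global counterparts without compactness of $M$. The substitute is that $\langle\Phi,b\rangle$ is proper and bounded below, so its sublevel sets are compact and it serves as a Morse--Bott function whose critical set is the compact fixed-point locus of the subtorus generated by $b$; connectivity must be propagated along these compact sublevel sets, while the slice theorem keeps indices and coindices even so that level sets remain connected, and the orbifold strata must be accommodated throughout. A secondary technical point is the finiteness of facets, which does not follow formally from finitely many vertices; I expect to obtain it by induction on dimension, each facet being the moment image of a codimension-two symplectic suborbifold again satisfying $(\dagger)$, or directly from the finitely many simplicial vertex-cones together with the polyhedrality of $C(\Phi(M))$.
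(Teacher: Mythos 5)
Your overall strategy -- make $\Phi$ proper from $(\dagger)$(b), extract the local model $\Phi(x)+(\mathfrak{k}^*\times\sigma_x)$ from Corollary~\ref{cor:slice1}, and feed local openness, local convexity and local fiber-connectedness into a local-to-global convexity principle -- is exactly the paper's route (the paper cites \cite[Theorem 3.19]{HNP94} for the globalization, so your worry about having to run a Morse--Bott propagation by hand is unnecessary; that principle already delivers openness, connected fibers, and the fact that $P$ is a closed, convex, locally polyhedral set). Your treatment of (ii), (iii) and (iv) matches the paper's.

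The genuine gap is in the last step of (i): upgrading ``closed, convex, locally polyhedral, with finitely many extreme points and line-free asymptotic cone'' to ``polyhedron,'' i.e.\ a \emph{finite} intersection of half-spaces. Your appeal to Lemma~\ref{lem:mink} is circular, since that lemma is a structure theorem \emph{for polyhedra} and presupposes exactly what you are trying to prove; likewise your alternative fix invokes ``the polyhedrality of $C(\Phi(M))$,'' which is not yet established at that stage (the asymptotic cone of a closed convex locally polyhedral set is not obviously polyhedral until the set itself is known to be a polyhedron). You correctly flag that finiteness of facets does not follow formally from finiteness of vertices, but neither of your proposed repairs closes the gap. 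The paper closes it as follows: for each point $a\in P$ the local cone from the slice model actually equals the \emph{global} recession set $L_a:=\R^+(P-a)$ (because $P$ is closed and convex), so $L_a$ is a polyhedral cone; letting $a_1,\dots,a_N$ be the finitely many extreme points, one proves the identity
\begin{align*}
P=\bigcap_{i=1}^N L_{a_i}
\end{align*}
by a nearest-point/supporting-hyperplane argument: if $y$ lay in the intersection but not in $P$, the supporting hyperplane at the nearest point of $P$ to $y$ would cut off some extreme point $a_i$ with $y\in L_{a_i}$, a contradiction. Since a finite intersection of polyhedral cones (translated) is a polyhedron, this finishes (i). You need some argument of this kind; without it the proof of (i) is incomplete.
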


\begin{proof}
We set $P$ to be the image of $\Phi$. For any $x \in |M|$, let $H:=\T_x$ with Lie algebra $\h$ and lattice $\mathfrak i$. We then choose a subtorus $K$ with Lie algebra $\mathfrak k$ so that $\lt=\mathfrak k \times \h$. It follows from Corollary \ref{cor:slice1} that there exists a $\T$-invariant neighborhood $U$ of $\T(x)$ such that $U$ is equivariantly symplectomorphic to a neighborhood of $K \times \{0\}$ in
\begin{align}
\label{E201aa}
T^* K \times V / \Gamma_x,
\end{align}
equipped with the standard symplectic form. Moreover, the moment map restricted to $U$ is
\begin{align}
\label{E201a}
\Phi ((k,\eta,v)) = \Phi(x) - \eta + \Phi_{V/\Gamma_x}(v),
\end{align}
where $(k,\eta) \in T^*K=K \times \mathfrak k^*$ and $v \in V/\Gamma_x$.

We claim that the cone $C = \Phi_{V/ \Gamma_x} (V / \Gamma_x)$ is a rational, simple polyhedral cone in $\h^*$. Indeed, let $\pi : \hat{H} \rightarrow H$ be the group obtained from Lemma \ref{lem:lift1}, then $\hat H$ is also a torus from Lemma \ref{lem:lift2}. It follows from a standard result (cf. \cite[Lemma 3.3]{LT97}) that $(V,\omega)$ admits a decomposition $(V,\omega)=\oplus_{i=1}^m( V_i, \omega_i)$ by invariant mutually perpendicular $2$-dimensional symplectic subspaces. In addition, the representation $\hat \rho$ of $\hat H$ on $(V_i, \omega_i)$ has weight $\beta_i$. Moreover, the moment map $\Phi_{V}: V \to \hat \h^*$ is given by
\begin{align}
\label{E201}
\Phi_{V} (v_1,\cdots,v_m) = \sum |v_i|^2 \beta_i \; \textit{ for all } v = (v_1,\cdots,v_m) \in \oplus_i V_i,
\end{align}
where $m=\mathrm{dim}\,\h$ and $|\cdot|$ is an invariant norm compatible with $\omega$. Since the representation $\hat{\rho}$ is faithful, the vectors $\{\beta_i\}$ for a $\Z$-basis of $\hat{\mathfrak i}^*$. Let $\{e_i\}$ denote the dual basis of the lattice $\hat{\mathfrak i}$. It is clear that the diagram
\begin{equation} \label{eq:dia1}
\begin{tikzcd}
V \arrow[d, "\hat{\Phi}_V"] \arrow[r] & V / \Gamma_x \arrow[d, "\Phi_{V / \Gamma_x}"] \\
\hat{\mathfrak{h}}^* & \h^* \arrow[l, "\varpi^*"] 
\end{tikzcd}
\end{equation}
commutes, where $\varpi : \hat{\h} \rightarrow \h$ is the isomorphism of Lie algebras induced by $\pi: \hat{H} \rightarrow H$ and $\varpi^*$ is its transpose. Therefore, the image of $\Phi_{V / \Gamma_x}$ is the rational simple polyhedral cone:
\begin{align*}
\Phi_{V / \Gamma_x} (V / \Gamma_x) = \{\eta \in \h^*\;|\; \langle \varpi{(e_i)},\eta \rangle \geq 0 \},
\end{align*}
which finishes the proof of the claim. 
Next, we define the polyhedral cone $C_x:=\mathfrak k^* \times C \subset \lt^*$. It is clear from \eqref{E201a}, \eqref{E201} and \eqref{eq:dia1} that for any open neighborhood $V$ of $x$, there exists a smaller neighborhood $x\in U_x \subset V$ such that $\Phi(U_x)$ is an open subset of $P$, which takes the form of an open neighborhood of $\Phi(x)$ in $\Phi(x)+C_x$. Moreover, $\Phi^{-1}(\Phi(y)) \cap U_x$ is connected for any $y \in U_x$. Since Definition \ref{def:orb1} (b) implies that $\Phi$ is proper, it follows from the ``local-global-principle'' \cite[Theorem 3.19]{HNP94} that $\Phi: M \to P$ is an open such that $\Phi^{-1}(a)$ is connected for any $a \in P$. In addition, $P$ is a closed, convex, locally polyhedral set in the sense that for any $a \in P$, there exists a neighborhood $V$ such that $V \cap P=V \cap (a+\overline{\R^+(P-a)})$. 
Notice that $\overline{\R^+(P-a)}=\R^+(P-a)=C_x$ for $x \in \Phi^{-1}(a)$ since $P$ is closed and convex. In particular, if $a=\Phi(x)$ is an extreme point of $P$, we must have $\mathfrak k^*=0$ and hence $H=\T^n$. Conversely, if $x$ is a fixed point of $\T^n$, $\Phi(x)$ is an extreme point by \eqref{E201}. Therefore, the assertions (ii), (iii), and (iv) of the theorem are proved.
Note that $P$ has at least one extreme point; otherwise, $P$ must contain a line because $P$ is closed and convex, which would contradict the assumption in Definition \ref{def:orb1} (b). Moreover, it follows from Definition \ref{def:orb1} (a) that $P$ has finitely many extreme points, denoted by $\{a_1,\cdots, a_N\}$.

Therefore, to prove (i), we only need to show that $P$ is a polyhedron. This follows immediately from the identity
\begin{align} \label{E201b}
P=\bigcap_{i=1}^N L_{a_i},
\end{align}
where $L_{a_i}:=\R^+(P-a_i)$ is a polyhedral cone. It is clear that $P \subset \bigcap_{i=1}^N L_{a_i}$. To prove that converse, we equip $\lt^*$ with an inner product that identifies $\lt$ with $\lt^*$. Suppose, to the contrary, that there exists $ y\in \bigcap_{i=1}^N L_{a_i} \backslash P$. Let $y' \in P$ be the nearest point in $P$ to $y$. Then, the affine hyperplane $H_{v,b}= \{ x \in \mathfrak{t}^* \; | \; \langle v , x \rangle +b =0\}$ for $v=y'-y$ and $b=-\la y',v \ra$ is a supporting hyperplane of $P$. Since $P$ contains no affine line, $P \cap H_{n,b}$ has an extreme point $a_i$. Since $y \in L_{a_i}$, according to the definition of $L_{a_i}$, there exists $z \in P$ located on the interior of the segment formed by $y$ and $a_i$. However, $z \notin H_{v,b}^+$, leading to a contradiction.
In conclusion, the identity \eqref{E201b} holds, and the proof of the theorem is complete. 
\end{proof}

As in the case of compact symplectic orbifolds, the polyhedron itself cannot fully determine the symplectic orbifold. Therefore, we need the concept of a \emph{labeled polyhedron}, introduced in \cite{LT97}, to capture information about the structure groups.

\begin{defn}
A \emph{labeled polyhedron} $P$ in $\lt^*$ is a proper, rational, simple polyhedron plus a positive integer attached to each facet. Two labeled polyhedra are \emph{isomorphic} if one can be mapped to the other by a translation, and the corresponding open facets have the same integer labels.
\end{defn}
Note that any labeled polyhedron $P$ can be expressed uniquely in the following way: 
\begin{align} \label{eq:rep2}
P = \bigcap_{i = 1}^{N} \{ x \in \lt^* \; | \langle x, m_i n_i \rangle+a_i \geq 0\},
\end{align} 
where $n_i \in \mathfrak l$ is primitive, and $F_i:=P \cap \{ x \in \lt^* \; | \langle x, m_i n_i \rangle+a_i =0\}$ is the $i$-th facet, to which the integer $m_i$ is attached.
Given a symplectic toric orbifold $(M^{2n}, \omega,\T^n, \Phi)$ with $(\dagger)$, we define an associated labeled polyhedron. First, we set $P=\Phi(M)$ and assign an integer to each facet of $P$ following \cite{LT97}. Let
\begin{align*}
P=\bigcap_{i=1}^N H^+_{n_i, a_i},
\end{align*}
so that $n_i \in \mathfrak l$ is primitive and the $i$-th facet is $F_i:=P \cap H_{n_i,a_i}$. For any $\Phi(x) \in \mathring{F_i}$, by using the same notations as in the proof of Theorem \ref{thm:image}, we know that both $\hat H$ and $H$ are circles. For $\{e_1\}$ being the $\Z$-basis of $\hat{\mathfrak{i}}$, we define $m_i>0$ to be the integer so that $ \varpi (e_1)=m_i n_i$. It follows from the diagram \eqref{D101} that $\Gamma_x=\Z/m_i \Z$. Moreover, it follows from \eqref{E201aa}, \eqref{E201a} and the fact that $\mathring{F_i}$ is connected that any point in $\Phi^{-1}(\mathring{F_i})$ has the same structure group $\Z/m_i \Z$. Therefore, $m_i$ is well-defined for the facet $F_i$.

Generally, one can read the structure group of any $x \in M$ from the labeled polyhedron $P$. Indeed, we assume, without loss of generality, that $\Phi(x)$ lies in the interior of $\cap_{i=1}^m F_i$. Then the isotropy group $H$ of $x$ is the subtorus of $\T^n$ so that its Lie algebra $\h$ is the linear span of $\{n_i\}_{1 \le i \le m}$. Moreover, the structure group $\Gamma_x$ at $x$ is isomorphic to $\mathfrak i/\mathfrak i'$, where $\mathfrak i$ is the lattice of $\h$, and $\mathfrak i'$ is the subgroup generated by $\{m_i n_i\}_{1 \le i \le m}$; see \cite[Lemma 6.6]{LT97}.

Next, we show that a labeled proper, rational, simple polyhedron determines the symplectomorphism of the symplectic toric orbifold. 
\begin{thm}
\label{T204}
If two symplectic toric orbifolds $(M^{2n}, \omega,\T^n, \Phi)$ and $(M'^{2n}, \omega',\T^n, \Phi')$ with $(\dagger)$ have isomorphic labeled polyhedra, then they are isomorphic as symplectic toric orbifolds.
\end{thm}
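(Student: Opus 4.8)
The plan is to produce a $\T^n$-equivariant symplectomorphism $F\colon M\to M'$ intertwining the two moment maps, following the scheme of Delzant \cite{De88} and Lerman--Tolman \cite{LT97}. Since an isomorphism of labeled polyhedra is by definition a translation of $\lt^*$, and shifting a moment map by a constant vector of $\lt^*$ yields another moment map for the same action, I may compose $\Phi'$ with this translation and assume from the outset that $\Phi(M)=\Phi'(M')=P$ as labeled polyhedra, with identical primitive normals $n_i$ and labels $m_i$.

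First I would trivialize both orbifolds over the interior $\mathring P$. By Theorem \ref{thm:image}(iv) and the local model \eqref{E201a}, a point $x$ with $\Phi(x)\in\mathring P$ has $C_x=\lt^*$, which forces $\h=0$; hence $\T_x$ is trivial and, by the label computation preceding Theorem \ref{T204}, so is $\Gamma_x$. Thus $\Phi^{-1}(\mathring P)$ is a free $\T^n$-manifold and $\Phi$ restricts to a proper Lagrangian torus fibration over $\mathring P$ with period lattice the fixed lattice $\mathfrak l$. Because $\mathring P$ is convex, hence contractible, the action--angle theorem gives a $\T^n$-equivariant, moment-map-preserving symplectomorphism $\Phi^{-1}(\mathring P)\cong(\mathring P\times\T^n,\ \textstyle\sum_i dp_i\wedge d\theta_i)$, and likewise for $M'$. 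Composing these produces an equivariant symplectomorphism $F_0\colon\Phi^{-1}(\mathring P)\to\Phi'^{-1}(\mathring P)$ over the identity of $\mathring P$.

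Next I would treat the strata over $\partial P$. For $x\in\Phi^{-1}(\partial P)$, Corollary \ref{cor:slice1} supplies an equivariant symplectic chart modeled on $T^*K\times V/\Gamma_x$ with the standard form, where $K$ is the complementary subtorus to $\T_x$. By the discussion following Theorem \ref{thm:image}, the normals $n_i$ and labels $m_i$ of the facets containing $\Phi(x)$ determine $\T_x$, the lift $\hat H$, the weights $\beta_i$ of the slice representation, and $\Gamma_x\cong\mathfrak i/\mathfrak i'$; hence the model $T^*K\times V/\Gamma_x$ is the same for $M$ and $M'$. This yields, near each orbit over $\partial P$, a local equivariant symplectomorphism $F_x$ whose restriction to the interior is again in action--angle form.

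The main obstacle is to glue $F_0$ with the local maps $F_x$ into a single global $F$. Over an overlap inside $\mathring P$, the comparison $F_x\circ F_0^{-1}$ is an equivariant, moment-preserving automorphism of $\mathring P\times\T^n$, necessarily of the form $(p,\theta)\mapsto(p,\theta+c(p))$; preservation of $\sum_i dp_i\wedge d\theta_i$ forces the symmetry $\partial c_i/\partial p_j=\partial c_j/\partial p_i$, i.e.\ the (locally $\R$-valued) $1$-form $\sum_i c_i\,dp_i$ is closed, so that these automorphisms form a sheaf on $P$ whose sections over contractible opens are exact translations $c=\nabla g$. One then organizes the $\{F_x\}$ and $F_0$ into a \v{C}ech $0$-cochain for an open cover of $P$ adapted to the stratification; its coboundary is a $1$-cocycle valued in this automorphism sheaf, and the obstruction to trivializing it lies in the first cohomology of that sheaf over $P$. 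Since $P$ is convex, hence contractible, and the sheaf is built from functions modulo affine ones, this cohomology vanishes, so the local maps can be simultaneously adjusted to agree and patch to a global $F$. The delicate point, and the reason the labels enter, is that the fiber translations used in the adjustment must respect the collapsing of the circle subgroups $\exp(\R n_i)$ prescribed by each facet, so that the corrected maps extend continuously across the lower strata; an equivariant Moser argument along the stratification, ordered by increasing codimension, makes this extension precise.
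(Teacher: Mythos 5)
Your proposal is correct and follows essentially the same route as the paper: both reduce to a common labeled polyhedron $P$, produce local isomorphisms over $P$ from the slice/local normal form, and glue by showing that the \v{C}ech cohomology $H^1(P,\mathcal S)$ of the sheaf of fiber-preserving automorphisms vanishes, using the contractibility of $P$ and the presentation of that sheaf as invariant smooth functions modulo $\underline{\ell\times\R}$, exactly as in \cite[Theorem 7.4]{LT97}. The only substantive difference is cosmetic: the paper realizes local automorphisms of $\Phi^{-1}(U)$ directly as time-one Hamiltonian flows of invariant functions on the full preimage, which makes the extension across the boundary strata (the ``delicate point'' you defer to an equivariant Moser argument) automatic.
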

\begin{proof}
The result follows essentially from \cite[Theorem 7.4]{LT97}, and we sketch the proof for readers' convenience.
We set $P=\Phi(M)=\Phi'(M')$ to be the common labeled polyhedron. Then it follows from \cite[Proposition 6.5]{LT97} that $(M^{2n}, \omega,\T^n, \Phi)$ and $(M'^{2n}, \omega',\T^n, \Phi')$ are locally isomorphic over $P$ in the sense that every point $a$ in P has a neighborhood $U_a$ such that $\Phi'^{-1}(U_a)$ and $\Phi^{-1}(U_a)$ are isomorphic as symplectic toric orbifolds. Then we define a sheaf $ \mathcal{S} $ over $P$ such that for every open set $U \subset P$, $\mathcal{S}(U)$ is the set of isomorphisms of $\Phi^{-1}(U)$. Suppose that $\mathcal{U} = \{U_i\}$ be a countable covering of $P$ such that there is an isomorphism $h_i : \Phi^{-1}(U_i) \rightarrow \Phi'^{-1}(U_i)$ for each $i$. We define $f_{ij} : \Phi^{-1}(U_i \cap U_j) \rightarrow \Phi^{-1}(U_i \cap U_j)$ by $f_{ij} = h_i^{-1} \circ h_j$. Then $\{f_{ij}\} \in C^1(\mathcal{U},\mathcal{S})$ and it is closed, i.e., $\{f_{ij}\} \in Z^1 (\mathcal{U}, \mathcal{S})$. Therefore, the global isomorphism between $(M^{2n}, \omega,\T^n, \Phi)$ and $(M'^{2n}, \omega',\T^n, \Phi')$ follows from the fact that the \v{C}ech cohomology $H^1(P, \mathcal S)=0$.
Let $\underline{\ell \times \R}$ denote the sheaf of locally constant functions with values in $\ell \times \R$, where $\ell \subset \lt$ is the lattice group. Next, we define a sheaf $\mathcal{C}^{\infty}$ over $P$ as follows: for each open set $U \subset P$, $\mathcal{C}^{\infty}(U)$ is the set of $\T^n$-invariant smooth functions on $\Phi^{-1}(U)$. It is clear that $H^i (P,\underline{\ell \times \R})=H^i (P,\mathcal{C}^{\infty})=0$ for any $i>0$ since $P$ is contractible and the fact that $\mathcal{C}^{\infty}$ is a fine sheaf.
Now, we have the short exact sequence of sheaves:
\begin{equation} \label{eq:ses}
\begin{tikzcd}
0 \arrow[r] & \underline{\ell\times \R} \arrow[r, "j"] & \mathcal{C}^{\infty} \arrow[r, "\Lambda"] & \mathcal{S} \arrow[r] & 0.
\end{tikzcd}
\end{equation}
Here, $j$ is defined by $j ( \xi ,c) (x) = c + \langle \xi, \Phi(x) \rangle$ for $(\xi,c) \in \ell \times \R$. For the map $\Lambda$, let $U \subset P$ be an open set and let $f : \Phi^{-1}(U) \rightarrow \R$ be a smooth $\T^n$-invariant function. Then we define the vector field $X_f$ satisfying $\iota_{X_f} \omega = df$, and define $\Lambda(f)$ to be the time one flow of $X_f$.
Therefore, the short exact sequence \eqref{eq:ses} yields immediately that $H^i(P,\mathcal{S}) = 0$ for $i > 0$.
\end{proof}
Conversely, for each labeled polyhedron, one can construct the corresponding symplectic toric orbifold with $(\dagger)$. The construction is essentially Delzant's construction \cite{De88} by symplectic reduction with minor modifications as \cite[Theorem 8.1]{LT97}.

\begin{thm}\label{T206}
Given a labeled polyhedron $P \subset \lt^*$, there exists a symplectic toric orbifold $(M^{2n},\omega,\T^n,\Phi)$ with $(\dagger)$ such that $\Phi(M) = P$ and the orbifold structure group of a point in $M$ which maps to the interior of the facet $F$ is $\Z / m \Z$, where $m$ is the integer attached to $F$.
\end{thm}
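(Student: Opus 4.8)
The plan is to realize $M$ by a Delzant-type symplectic reduction, following \cite{De88} and \cite[Theorem 8.1]{LT97}, the only genuinely new input being the verification of $(\dagger)$ in the noncompact setting. First I would encode the combinatorics. Writing $P$ in its unique normal form \eqref{eq:rep2} as $P=\bigcap_{i=1}^N\{x\mid \la x, m_in_i\ra + a_i \ge 0\}$ with $n_i\in\mathfrak l$ primitive, I define the linear map $\beta:\R^N\to\lt$ on the standard basis by $\beta(\epsilon_i)=m_in_i$. Because $P$ is proper and simple, its asymptotic cone contains no line, so the $n_i$ span $\lt$ and $\beta$ is surjective; set $\mathfrak n=\ker\beta$. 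On the integral level the map $\Z^N\to\mathfrak l$, $\epsilon_i\mapsto m_in_i$, generally has image a proper finite-index sublattice, so the kernel $N$ of the induced homomorphism $\T^N\to\T^n=\lt/\mathfrak l$ is a compact abelian group of dimension $N-n$ which need not be connected; its component group is exactly the torsion of $\mathfrak l/\beta(\Z^N)$, and this disconnectedness is what produces the orbifold singularities. This yields the exact sequence $1\to N\to\T^N\to\T^n\to 1$.

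Next I would carry out the reduction. Equip $\C^N$ with its standard symplectic form and the Hamiltonian $\T^N$-action with moment map $\phi(z)=\tfrac12\sum_i|z_i|^2\,\epsilon_i^*$, whose image is the positive orthant; composing with the transpose $\iota^*:(\R^N)^*\to\mathfrak n^*$ of the inclusion $\mathfrak n\hookrightarrow\R^N$ gives the moment map $\phi_N=\iota^*\circ\phi$ for the $N$-action. Setting $a=\sum_i a_i\epsilon_i^*$ and $\nu=\iota^*(a)$, I define $M:=\phi_N^{-1}(\nu)/N$. Since $\mathrm{Im}\,\beta^*=\ker\iota^*$, on the level set the class $\phi(z)-a$ lifts uniquely to some $\Phi\in\lt^*$ with $\beta^*\Phi=\phi(z)-a$, and $\Phi$ descends to the moment map of the residual $\T^n=\T^N/N$-action. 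By construction $\la\Phi,m_in_i\ra+a_i=\tfrac12|z_i|^2\ge 0$, which shows $\Phi(M)=P$ and, by choosing $z$ with prescribed vanishing coordinates, that every face is attained.

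Then I would check that $M$ is a symplectic toric orbifold with the asserted local structure. As $N$ is compact its action is automatically proper, so $M$ is Hausdorff; simplicity of $P$ guarantees that $\nu$ is a regular value of $\phi_N$ and that $N$ acts locally freely on $\phi_N^{-1}(\nu)$: at a point $z$ with vanishing set $I$, simplicity forces $\{n_i\}_{i\in I}$ to be linearly independent, so $\mathfrak n\cap\mathrm{span}\{\epsilon_i\}_{i\in I}=0$ and the stabilizer $N\cap\T^I$ is finite. Hence $M$ is a symplectic orbifold of dimension $2N-2(N-n)=2n$ with moment map $\Phi$, and it is connected since $\Phi$ is open with connected torus fibers over the convex base $P$. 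For a point mapping into $\mathring F_i$ one has $I=\{i\}$, and since $n_i$ is primitive the same lattice computation as in the proof of Theorem \ref{thm:image} (cf. \cite[Lemma 6.6]{LT97}) identifies its structure group with $\mathfrak i/\mathfrak i'=\Z/m_i\Z$.

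Finally I would verify $(\dagger)$, which is the step that genuinely departs from the compact theory of \cite{LT97}. Condition (a) is immediate: fixed points correspond under $\Phi$ to vertices of $P$, and a polyhedron cut out by finitely many facets has only finitely many vertices. For (b), I first observe that $\Phi$ is proper, because on $\Phi^{-1}(K)$ for compact $K\subset P$ each $|z_i|^2=2(\la\Phi,m_in_i\ra+a_i)$ is bounded, so the corresponding subset of $\phi_N^{-1}(\nu)$ is compact and so is its image in $M$. Properness of $P$ then provides, as noted after Definition \ref{def:poly}, some $b\in\lt$ with $\la\cdot,b\ra$ proper and bounded below on $P$, whence $\la\Phi,b\ra$ is proper and bounded below on $M$. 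The hard part is precisely this last step: unlike the compact case the level set $\phi_N^{-1}(\nu)$ is noncompact, so one must invoke properness of $P$, equivalently that its asymptotic cone contains no line, both to know $P$ possesses a vertex at all and to transport the proper exhausting function $\la\cdot,b\ra$ back to $M$ via the properness of $\Phi$; once $\Phi$ is shown proper the remaining assertions are formal.
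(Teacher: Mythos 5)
Your proposal is correct and follows essentially the same route as the paper: the Delzant--Lerman--Tolman symplectic reduction of $\C^N$ by the kernel of $\T^N\to\T^n$, with the residual moment map identifying the image with $P$ and the isotropy computation giving the labels. You are in fact somewhat more careful than the paper's sketch, both in working with the possibly disconnected kernel (which the paper loosely calls a subtorus) and in spelling out the verification of $(\dagger)$, which the paper dismisses as clear.
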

\begin{proof}
We sketch the construction. We write $P$ as in \eqref{eq:rep2}
\begin{align*}
P = \bigcap_{i = 1}^{N} \{ x \in \lt^* \; | \langle x, m_i n_i \rangle+a_i \geq 0\}.
\end{align*} 
We define a linear projection $\varpi: \R^N \rightarrow \lt$ by $\varpi(e_i) = m_i n_i$, where $\{e_i\}$ is the standard basis for $\R^N$. The kernel of $\varpi$, denoted by $\mathfrak k$, is the Lie algebra of a subtorus $K \subset \T^N$. Therefore, we obtain a short exact sequence and its dual : 
\begin{equation*}
\begin{tikzcd}
0 \arrow[r] & \mathfrak{k} \arrow[r, "i"] & \R^N \arrow[r, "\varpi"] & \lt \arrow[r] & 0, \\
0 \arrow[r] & \lt^* \arrow[r, "\varpi^{*}"] & (\R^N)^* \arrow[r, "i^*"] & \mathfrak{k}^* \arrow[r] & 0,
\end{tikzcd}
\end{equation*} 
where $i$ is the inclusion. Equiped $\C^N$ with standard symplectic form $\sum_j \frac{\sqrt{-1}}{2} d z_j \wedge d \bar{z}_j$. The standard $\T^N$-action on $\C^N$ has moment map 
\begin{align*}
\Phi_{\T^N} (z_1,\cdots,z_N) = \frac{1}{2}\sum_{i =1}^N |z_i|^2 e_i^*,
\end{align*}
where $\{e_i^*\}$ is the basis dual to $\{e_i\}$. Next, we define an affine embedding $l: \lt^* \rightarrow (\R^N)^*$ by $l(x) = \varpi^*(a)+ a$, where $x=\sum_i a_i e^*_i$. Note that 
\begin{align*}
l (P) 
& = \{ \xi \in (\R^N)^* \; | \; \xi \in l(\lt^*) \textrm{ and } \xi_i \geq 0 \textrm{ for all } i\}= \{ \xi \in (\R^N)^* \; | \; \xi \in (i^*)^{-1} (i^*(a)) \textrm{ and } \xi_i \geq 0 \textrm{ for all } i\}.
\end{align*}
The Hamiltonian $K$-action on $\C^N$ has the moment map $\Phi_K = i^* \circ \Phi_{\T^N}$. It is not hard to prove that $i^*(a)$ is a regular value of $\Phi_K$, and the reduced space $M = \Phi^{-1}_K (i^*(a)) / K$ is a symplectic toric orbifold. The action of $\T^N$ descends to a Hamiltonian action of $\T^n=\T^N/K$, and the image of the corresponding moment map is exactly $P$. Moreover, one can show that for $[z] \in M$ mapping to the interior of the $i$-th facet of $P$, the structure group at $[z]$ is exactly the isotropy group $K_z$ for $z \in \C^N$, which is easy to see equal to $\Z /m_i \Z$. 
It is clear that the constructed symplectic toric orbifold satisfies $(\dagger)$ since $P$ satisfies $(\dagger)$. In sum, the proof is complete.
\end{proof}

For a given labeled polyhedron $P$, we denote the associated symplectic toric orbifold obtained in Theorem \ref{T206} by $(M_P, \omega_P)$, which carries a compatible $\T^n$-invariant complex structure $J_P$ inherited from that of $\C^N$. In addition, the torus $\T^n$-action can be extended to a holomorphic $(\C^*)^n$-action on $M_P$, and hence $(M_P,J_P, (\C^*)^n)$ is a toric variety, whose concept we recall below; see \cite[Theorem VII.2.1]{Au04} or \cite[Lemma 2.1]{BGL08}.

\begin{defn}[Toric variety] \label{def:tr}
A toric variety $X$ is an irreducible normal variety containing a complex torus $(\C^*)^n$ as a Zariski open subset such that the action of $(\C^*)^n$ on itself extends to an algebraic action of $(\C^*)^n$ on X.
\end{defn}

It is known (cf. \cite[Corollary 3.18]{CLS11}) that a toric variety $X_{\Sigma}$ can be obtained from a fan $\Sigma \subset \lt^*$ by first constructing an affine toric variety for each cone in $\Sigma$ and then gluing all pieces together. In particular, $(M_P,J_P, (\C^*)^n)$ as a toric variety is determined by the normal fan of the polyhedron $P$.

Combined with Theorem \ref{T204}, the following result is immediate; see also \cite[Theorem 9.1]{LT97}.

\begin{thm}\label{thm:ka}
Any symplectic toric orbifold $(M^{2n}, \omega,\T^n, \Phi)$ with $(\dagger)$ has a complex structure $J$ so that it becomes a K\"ahler toric orbifold. Moreover, $(M,J, (\C^*)^n)$ is a toric variety with a fan equal to the normal fan of $\Phi(M)$.
\end{thm}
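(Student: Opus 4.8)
The plan is to transport all the complex-analytic structure from the Delzant model produced in Theorem \ref{T206} back to $M$ via an equivariant symplectomorphism, relying on the classification established in Theorem \ref{T204}.

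First I would set $P=\Phi(M)$, which by Theorem \ref{thm:image} is a proper, rational, simple polyhedron, and attach to each facet the integer dictated by the structure group of its preimage, as described after Definition \ref{def:orb1}. This turns $P$ into a labeled polyhedron. Then I would invoke Theorem \ref{T206} to produce the standard model $(M_P,\omega_P,\T^n,\Phi_P)$ with $\Phi_P(M_P)=P$ and the same facet labels; by construction this carries a $\T^n$-invariant complex structure $J_P$ compatible with $\omega_P$, inherited from $\C^N$ through the symplectic reduction, and its $\T^n$-action extends to a holomorphic $(\C^*)^n$-action, so that $(M_P,J_P,(\C^*)^n)$ is a toric variety whose fan is the normal fan of $P$.

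Next, since $(M,\omega,\T^n,\Phi)$ and $(M_P,\omega_P,\T^n,\Phi_P)$ share the same labeled polyhedron, Theorem \ref{T204} yields a $\T^n$-equivariant symplectomorphism $\Psi:(M,\omega)\to(M_P,\omega_P)$. I would then simply define $J:=\Psi^*J_P$, i.e. $J=d\Psi^{-1}\circ J_P\circ d\Psi$ on each uniformized tangent space. Because $\Psi^*\omega_P=\omega$ and $J_P$ is $\omega_P$-compatible, the identity $\omega(\cdot,J\cdot)=\omega_P(d\Psi\,\cdot,J_P\,d\Psi\,\cdot)$ shows that $g(\cdot,\cdot):=\omega(\cdot,J\cdot)$ is symmetric and positive definite, so $J$ is an $\omega$-compatible complex structure; equivariance of $\Psi$ together with $\T^n$-invariance of $J_P$ makes $J$ invariant. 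Thus $(M,\omega,J)$ is a K\"ahler toric orbifold and $\Psi$ is a biholomorphism $(M,J)\to(M_P,J_P)$.

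Finally, transporting the holomorphic $(\C^*)^n$-action on $M_P$ through the biholomorphism $\Psi$ gives a holomorphic $(\C^*)^n$-action on $(M,J)$ that extends the $\T^n$-action; since $M$ is thereby biholomorphic to the toric variety $M_P$ while respecting the big torus, $(M,J,(\C^*)^n)$ is itself a toric variety with the same fan, namely the normal fan of $\Phi(M)$. I expect no genuine obstacle here: the two substantial ingredients---existence of the model with its complex structure and uniqueness up to symplectomorphism---are exactly Theorems \ref{T206} and \ref{T204}, so the only point requiring care is that pulling back $J_P$ really produces a compatible complex \emph{orbifold} structure, which is a routine chart-by-chart verification since $\Psi$ is an equivariant diffeomorphism of orbifolds.
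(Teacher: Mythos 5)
Your proposal is correct and is essentially the paper's argument: the paper states this theorem as an immediate consequence of Theorem \ref{T204} (uniqueness up to equivariant symplectomorphism for a given labeled polyhedron) and Theorem \ref{T206} (Delzant's construction of the model $(M_P,\omega_P,J_P)$ carrying a compatible complex structure and a $(\C^*)^n$-action determined by the normal fan of $P$), with the complex structure on $M$ obtained by pulling back $J_P$ along the equivariant symplectomorphism. No meaningful difference in route.
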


For a general K\"ahler toric orbifold (cf. Definition \eqref{def:kto}), we prove the following result similar to \cite[Lemma 2.14]{Ci22}; see also \cite[Lemma 9.2]{LT97} and \cite[Proposition 1]{Ab03}.

\begin{thm}\label{thm:to}
Let $(M^{2n}, \omega,J, (\C^*)^n, \Phi)$ be a K\"ahler toric orbifold with $(\dagger)$. Then, there exists a $(\C^*)^n$-equivariant biholomorphism $\varphi:(M_P,J_P) \to (M,J)$, where $P=\Phi(M)$. Moreover, $[\varphi^* \omega]=[\omega_P]$ in the orbifold de Rham group $H^2_{\mathrm{dR}}(M_P)$.
\end{thm}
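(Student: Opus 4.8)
The plan is to realize both $(M,J)$ and $(M_P,J_P)$ as toric varieties attached to the \emph{same} fan, namely the normal fan of $P=\Phi(M)$, and then to invoke the classification of toric varieties by their fans, before comparing K\"ahler classes through the moment polyhedron. First I would show that $(M,J,(\C^*)^n)$ is itself a toric variety whose fan is the normal fan of $P$. The principal orbit through a point mapping to the interior of $P$ is a Zariski-open copy of $(\C^*)^n$ inside $M$, so Definition \ref{def:tr} applies once normality is verified, and the local structure is supplied by the holomorphic slice theorem (Theorem \ref{thm:hslice}): around each $x$, $M$ is $(\C^*)^n$-equivariantly biholomorphic to $K_{\C}\times V/\Gamma_x$, an affine toric orbifold. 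By Theorem \ref{thm:image} the orbit-type stratification of $M$ matches the face structure of $P$ exactly, with fixed points corresponding to vertices, and the isotropy/label data at a point mapping into $\cap_i \mathring{F_i}$ read off from the primitive normals $n_i$ and integers $m_i$ precisely as in the normal fan. Assembling the affine charts along this combinatorics identifies the fan of $M$ with the normal fan of $P$.

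Since Theorem \ref{thm:ka} already presents $(M_P,J_P,(\C^*)^n)$ as the toric variety of the normal fan of $P$, the classification of toric varieties by fans (cf. \cite[Corollary 3.18]{CLS11}) then yields a $(\C^*)^n$-equivariant biholomorphism $\varphi:(M_P,J_P)\to(M,J)$; here one matches not only the cones but also the lattice and labeling data, which is automatic because both fans carry the same primitive generators $n_i$ and the same structure groups $\Z/m_i\Z$ on corresponding strata. For the cohomological statement I would compare $\varphi^*\omega$ and $\omega_P$ as two $\T^n$-invariant K\"ahler forms on the fixed complex orbifold $(M_P,J_P)$. Their moment maps $\Phi\circ\varphi$ and $\Phi_P$ both have image $P$, hence the same support parameters $a_i$ in the representation $P=\bigcap_i H^+_{n_i,a_i}$. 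On a toric orbifold the de Rham class of a $\T^n$-invariant K\"ahler form depends only on these support parameters, and this dependence is injective: the facet $F_i$ corresponds to the toric divisor $D_i$, and $a_i$ is recovered as a pairing of $[\omega]$ against a toric cycle via a Duistermaat--Heckman type computation. As $\varphi^*\omega$ and $\omega_P$ share all $a_i$, we conclude $[\varphi^*\omega]=[\omega_P]$ in $H^2_{\mathrm{dR}}(M_P)$.

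I expect the main obstacle to be the first step: verifying carefully that the affine slice charts furnished by Theorem \ref{thm:hslice} glue into a genuine toric variety whose fan is \emph{precisely} the normal fan of $P$, with the correct lattice and orbifold-structure-group data on every stratum, rather than mere agreement of the top-dimensional cones. The noncompactness of $P$ and the orbifold singularities both complicate this bookkeeping. The cohomology comparison is a secondary difficulty, since divisor classes and the pairing used to recover the $a_i$ behave less simply for noncompact orbifolds, where one must work with Borel--Moore cycles or compactly supported cohomology in place of naive Poincar\'e duality.
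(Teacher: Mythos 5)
Your construction of the equivariant biholomorphism is essentially the paper's own argument: both rely on Theorem \ref{thm:hslice} to produce affine toric charts $\C^n/\Gamma_i$ around the fixed points, observe that these cover $M$ (every face of the proper polyhedron $P$ contains a vertex), and glue using transition maps dictated by the combinatorics of the normal fan together with the labels $m_i$. The only organizational difference is that the paper first applies Theorem \ref{T204} to obtain a $\T^n$-equivariant \emph{symplectomorphism} $\psi:(M_P,\omega_P)\to(M,\omega)$ and then compares the two complex structures $J_P$ and $\psi^*J$ on the fixed symplectic orbifold $M_P$, whereas you compare $(M,J)$ and $(M_P,J_P)$ directly as toric varieties of the same fan. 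That reordering matters for the second half of the statement, which is where your proposal has a genuine gap.

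Your cohomology argument rests on the claim that the class of a $\T^n$-invariant K\"ahler form on $(M_P,J_P)$ is determined by the support parameters $a_i$ of its moment image. This is the crux, and it is asserted rather than proved. The natural way to prove it is the identity $[\omega]/2\pi=\sum_i a_i[D_i]$ for an \emph{arbitrary} invariant K\"ahler form, but in the paper this identity is only available as a consequence of Theorem \ref{thm:to} itself (it is how Corollary \ref{cor:class} is derived from Proposition \ref{prop:class}(ii), which is proved only for the Guillemin metric $\omega_P$); invoking it here is circular unless you supply an independent proof, and your proposed substitute --- recovering the $a_i$ by pairing $[\omega]$ against toric cycles --- runs into exactly the noncompactness and orbifold issues you flag, and moreover cannot work as stated: the $a_i$ are \emph{not} recoverable from $[\omega]$, since the exact sequence $0\to\lt^*\to\mathcal D\to H^2_{\mathrm{dR}}(M_P)\to 0$ shows they are determined only up to the translation ambiguity $a_i\mapsto a_i+\langle w,n_i\rangle$. (You only need the well-defined direction, but that is precisely the unproved part.) The paper sidesteps all of this: because $\psi$ is a symplectomorphism one has $\psi^*\omega=\omega_P$ on the nose, so the cohomological statement reduces to showing that the complex-structure-correcting biholomorphism $\phi$ of $M_P$ acts as the identity on $H^2$; this follows because $\phi$ preserves each divisor $D_i$, these generate $\mathrm{Pic}(M_P)$ by \cite[Theorem 4.2.1]{CLS11}, and $\mathrm{Pic}$ surjects onto $H^2(|M_P|,\Z)$ by \cite[Theorem 12.3.2]{CLS11}. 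To repair your argument you should either route it through Theorem \ref{T204} in this way, or prove directly (via Theorem \ref{thm:bound} and the Legendre transform) that the difference of the two K\"ahler potentials extends to a smooth global function on $M_P$, so that the two forms differ by $\partial\bar\partial$ of a smooth function.
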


\begin{proof}
It follows from Theorem \ref{T204} that there exists a $\T^n$-equivariant symplectomorphism $\psi: (M_P, \omega_P, \T^n) \to (M, \omega, \T^n) $. We set $J':=\psi^* J$, which, by our assumption, is a compatible $\T^n$-invariant complex structure on $(M_P, \omega_P, \T^n)$ such that the action extends to $(\C^*)^n$.

We claim that $(M_P, J_P,(\C^*)^n )$ is equivariantly biholomorphic to $(M_P, J',(\C^*)^n )$ via a map $\phi$. In addition, the biholomorphism $\phi$ acts as the identity on the cohomology group $H^2(|M_P|, \Z)$. Indeed, the claim can be proved similar to \cite[Proposition 1]{Ab03}; see also \cite[Lemma 9.2]{LT97}. We set $x_1,\cdots,x_m$ are all points fixed by $(\C^*)^n$. For each $x_i$, it follows from Theorem \ref{thm:hslice} that there exists a $(\C^*)^n$-invariant neighborhood $U_i$, which is $(\C^*)^n$-equivariantly biholomorphic to $\C^n/\Gamma_i$, where $\Gamma_i$ is the structure group at $x_i$. In particular, the neighborhood $U_i$ is an affine toric variety on which $(\C^*)^n$-action is linearized. It is not difficult to show that the collection $\{U_i\}$ covers $M_P$, and all the transition maps are rational and entirely determined by the combinatorics of $P$. Therefore, we can define a biholomorphism $\phi: (M_P, J_P) \to (M_P, J')$; see, for example, \cite[Lemma 2.14]{Ci22}.

We set the $\T^n$-invariant Weil divisor $D_i:=\Phi_P^{-1}(F_i)$, where $F_i$ is the $i$-th facet of $P$. Note that the map $\phi$ constructed preserves all $D_i$. Therefore, $\phi$ acts as the identity on the Picard group $\mathrm{Pic}(M_P)$, since $\mathrm{Pic}(M_P)$ is generated by all $D_i$ (cf. \cite[Theorem 4.2.1]{CLS11}). Consequently, it follows from \cite[Theorem 12.3.2]{CLS11} that $\phi$ acts as the identity on $H^2(|M_P|, \Z)$.

Finally, the equivariant biholomorphism is defined as $\varphi=\phi \circ \psi$, which satisfies $[\varphi^* \omega]=[\omega_P]$ in $H^2(|M_P|,\Z)$ and hence in $H^2_{\mathrm{dR}}(M_P)$ by Proposition \ref{thm:der}.
\end{proof}

\section{K\"ahler geometry of toric orbifolds}

\subsection{K\"ahler metrics on toric orbifolds}

In this section, we consider a K\"ahler toric orbifold $(M^{2n}, \omega, J, (\C^*)^n, \Phi)$ with ($\dagger$) (cf. Definition \ref{def:kto},\ref{def:orb1}), and recall the Abreu-Guillemin theory on the K\"ahler metrics (cf. \cite{G94}\cite{Ab03}\cite{Ab01}\cite{Ap19}).

Let $P=\Phi(M)$ be the labeled polyhedron. By \eqref{eq:rep2}, $P$ is uniquely represented as: 
\begin{align} \label{eq:rep3}
P = \bigcap_{i = 1}^{N} \{ x \in \lt^* \; | \langle x, m_i n_i \rangle+a_i \geq 0\},
\end{align} 
where $n_i \in \mathfrak l$ is primitive, and $F_i:=P \cap \{ x \in \lt^* \; | \langle x, m_i n_i \rangle+a_i =0\}$ is the $i$-th facet, to which the integer $m_i$ is attached. Denote $\mathring{P}$ as the interior of the polyhedron $P$, and define $\mathring{M} = \Phi^{-1}(\mathring{P})$.

We fix a basis $\{e_1,\cdots,e_n\}$ of $\lt$ and denote by $K_i =(e_i)_M$ the induced fundamental vector field. Also, we can identify $\lt^* \simeq \R^n$ by using the dual basis $\{e_1^*,\cdots,e_n^*\}$. If we set 
\begin{align*}
\iota_{K_i} \omega = - d x_i,
\end{align*}
the moment map $\Phi$ is $x=(x_1,\cdots, x_n)$.

Let $g(\cdot, \cdot)=\omega(\cdot, J\cdot)$ be the K\"ahler metric, and we set $H_{ij} = g(K_i,K_j)$. Since $H_{ij}$ is $\T^n$-invariant,we can regard $H_{ij}$ as a smooth function $H_{ij}(x)$ on $P$. More intrinsically, $\mathbf{H}:=\{H_{ij}\}$ is a smooth function over $P$ with values in $S^2\lt^*$, i.e., symmetric $2$-form on $\lt^*$. Since $\mathbf{H}$ is positive definite on $\mathring{P}$, we denote $\mathbf{G} : = \mathbf{H}^{-1}$ the inverse matrix on $\mathring{P}$ with values in $S^2\lt$.

It is clear that $\{K_1,\cdots,K_n,JK_1,\cdots,JK_n\}$ forms a commutative basis of $T_p M$ at each point $p \in \mathring{M}$. On $\mathring{M}$, we denote its dual $1$-forms by $\{\theta_1,\cdots,\theta_n,J\theta_1,\cdots,J \theta_n\}$. It is not hard to see that
\begin{align*}
d \theta_i = d (J \theta_i)= \iota_{K_j} J \theta_i=\LL_{K_j} J \theta_i = 0.
\end{align*}
Therefore, $J \theta_i=\Phi^*(\beta)$ for a $1$-form $\beta$ on $\mathring{P}$. Since $J \theta_i$ is closed and $\mathring{P}$ is contractible, we can write 
\begin{align*}
J \theta_i = - d y_i
\end{align*}
for some smooth function $y_i (x)$, defined on $\mathring{P}$ up to an additive constant. Furthermore, we have 
\begin{align} \label{eq:id1}
\frac{d y_i}{dx_j} = G_{ij}(x).
\end{align}
Note that the connection of the torus bundle $\mathring{M} \to \mathring{P}$ has the flat connection $\mathrm{\theta}:=\sum_{i =1}^n \theta_i \otimes e_i$. The K\"ahler form $\omega$ and the K\"ahler metric $g$ on $\mathring{P}$ become
\begin{align}
\label{eq:id2}
\omega = \sum_{i = 1}^n d x_i \wedge \theta_i \quad \text{and} \quad g = \sum_{i,j=1}^n (G_{ij} d x_i \otimes d x_j + H_{ij} \theta_i \otimes \theta_j).
\end{align} 

We fix a point $p_0 \in \mathring{M}$ and identify $\mathring{M}=(\C^*)^n(p_0)=(\C^*)^n$ with holomorphic coordinates $z_i=\exp(y_i+\sqrt{-1} t_i)$, where $(t_1,\cdots, t_n): \mathring{M} \to \lt/\mathfrak{l}=\T^n$ are the angular coordinates with $dt_i=\theta_i$. The functions $\{x_1,\cdots,x_n; t_1,\cdots,t_n\}$ on $\mathring{P} \times \T^n \cong \mathring{M}$ are called \emph{momentum-angle coordinates}. Note that once $p_0$ is chosen, $\{y_i\}$ are determined.

The following result is from \cite{G94}.

\begin{lem}
There exists a smooth, strictly convex function $u(x)$ on $\mathring{P}$, called \emph{symplectic potential} of $g$, satisfying
\begin{align*}
G_{ij} = \frac{\partial^2 u}{\partial x_i \partial x_j}.
\end{align*}
\end{lem}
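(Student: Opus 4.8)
The plan is to integrate the first-order relation \eqref{eq:id1}. The key structural input is that $\mathbf{G}=\mathbf{H}^{-1}$ is \emph{symmetric}: since $g$ is symmetric we have $H_{ij}=g(K_i,K_j)=g(K_j,K_i)=H_{ji}$, and the inverse of a symmetric matrix is symmetric. Combining this with \eqref{eq:id1}, which identifies $G_{ij}$ with the Jacobian entries $\partial y_i/\partial x_j$ of the map $y=(y_1,\dots,y_n)$ on $\mathring P$, yields the integrability condition
\begin{align*}
\frac{\partial y_i}{\partial x_j}=G_{ij}=G_{ji}=\frac{\partial y_j}{\partial x_i}.
\end{align*}
Equivalently, the $1$-form $\alpha:=\sum_{i=1}^n y_i\,dx_i$ on $\mathring P$ is closed.

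Next I would invoke the Poincar\'e lemma. Because $\mathring P$ is the interior of a convex polyhedron, it is convex and hence contractible, so every closed $1$-form on it is exact. Thus there is a smooth function $u$ on $\mathring P$, unique up to an additive constant, with $du=\alpha$, i.e. $\partial u/\partial x_i=y_i$ for every $i$. Differentiating once more and using \eqref{eq:id1} gives
\begin{align*}
\frac{\partial^2 u}{\partial x_i\,\partial x_j}=\frac{\partial y_i}{\partial x_j}=G_{ij},
\end{align*}
which is the asserted identity. Smoothness of $u$ is automatic, as it is an antiderivative of the smooth functions $y_i$ (equivalently, its Hessian consists of the smooth functions $G_{ij}$).

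Finally, strict convexity follows from positivity of the Hessian. On $\mathring M$ the fundamental vector fields $K_1,\dots,K_n$ are linearly independent, so for the Riemannian metric $g$ the Gram matrix $\mathbf{H}=(H_{ij})$ is positive definite on $\mathring P$; hence its inverse $\mathbf{G}$ is positive definite there as well. Since $\he u=\mathbf{G}$, the function $u$ is strictly convex on $\mathring P$.

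I do not expect a serious obstacle here, since the whole argument reduces to the exactness of a closed $1$-form. The two points meriting a line of justification are the symmetry of $\mathbf{G}$ (needed for closedness of $\alpha$) and the contractibility of $\mathring P$ (needed for exactness); both follow immediately from the symmetry of $g$ and the convexity of $P$. The only genuine content beyond these is the already-established relation \eqref{eq:id1}, on which the entire integration rests.
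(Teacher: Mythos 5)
Your proof is correct. The paper itself offers no proof of this lemma, simply citing Guillemin's work \cite{G94}; your argument --- closedness of $\sum_i y_i\,dx_i$ from the symmetry of $\mathbf{G}=\mathbf{H}^{-1}$, exactness from the convexity (hence contractibility) of $\mathring{P}$, and strict convexity from the positive definiteness of $\mathbf{G}$ --- is precisely the standard derivation behind that citation, and it correctly builds on the relation \eqref{eq:id1} already established in the text preceding the lemma.
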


It follows from \eqref{eq:id1} and the discussion above that we may assume that $y_i=\partial u/\partial x_i$. Since $u$ is strictly convex on $\mathring{P}$, and $y=(y_1,\cdots, y_n)$ sends $\mathring{P}$ onto $\R^n$, we define the Legendre transform of the symplectic potential $u(x)$ as the function $\phi(y) = \phi(y_1,\cdots,y_n)$ satisfying
\begin{align*}
\phi(y(x)) + u(x) = \langle y(x), x\rangle,
\end{align*}
The map $\phi(y): \R^n \to \R$ is called the \emph{K\"ahler potential}, a name justified by the following result from \cite[Theorem 4.1]{G94}.

\begin{lem}
\label{lem:kp}
On $\mathring{M}$, we have
\begin{align*}
\omega = 2\sqrt{-1}\partial \bar \partial \phi \quad \text{and} \quad g = \sum_{i,j=1}^n \lc \frac{\partial^2 \phi}{\partial y_i \partial y_j} d y_i \otimes d y_j + \frac{\partial^2 \phi}{\partial y_i \partial y_j} \theta_i \otimes \theta_j \rc.
\end{align*} 
\end{lem}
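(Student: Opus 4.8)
The plan is to work throughout on the principal orbit $\mathring M \cong (\C^*)^n$ in the momentum--angle/holomorphic coordinates already fixed, and to reduce everything to two standard facts about the Legendre transform relating $u(x)$ and $\phi(y)$. First I would record the dual identities: differentiating the defining relation $\phi(y(x))+u(x)=\la y(x),x\ra$ together with $y_i=\partial u/\partial x_i$ gives $x_i=\partial\phi/\partial y_i$, and one further differentiation yields $\partial^2\phi/\partial y_i\partial y_j=\partial x_i/\partial y_j$. Since $\partial y_k/\partial x_j=G_{kj}$ by \eqref{eq:id1} and $\mathbf G=\mathbf H^{-1}$, the Jacobian $(\partial x_i/\partial y_j)$ is exactly $\mathbf H$, so that $\phi_{ij}:=\partial^2\phi/\partial y_i\partial y_j=H_{ij}$ and, dually, $dx_i=\sum_j H_{ij}\,dy_j$ (equivalently $dy_k=\sum_m G_{km}\,dx_m$). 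These two identities are the engine of the whole argument.

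For the first assertion I would compute $2\sqrt{-1}\,\partial\bar\partial\phi$ directly. Writing the holomorphic coordinate as $w_i=\log z_i=y_i+\sqrt{-1}\,t_i$, so that $y_i=\tfrac12(w_i+\bar w_i)$, a short calculation gives $\partial\bar\partial\phi=\tfrac14\sum_{j,k}\phi_{jk}\,dw_j\wedge d\bar w_k$. Expanding $dw_j\wedge d\bar w_k$ into its $dy\wedge dy$, $dt\wedge dt$ and mixed parts, and using the symmetry of $\phi_{jk}$ to cancel the first two, the antisymmetric $(dy,dy)$ and $(\theta,\theta)$ contributions drop out and one is left with $2\sqrt{-1}\,\partial\bar\partial\phi=\sum_{j,k}\phi_{jk}\,dy_j\wedge\theta_k$, where $\theta_k=dt_k$. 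Substituting $\phi_{jk}=H_{jk}$ and then $dx_i=\sum_j H_{ij}\,dy_j$, together with the symmetry $H_{ij}=H_{ji}$, converts this expression into $\sum_i dx_i\wedge\theta_i$, which is $\omega$ by \eqref{eq:id2}. This establishes $\omega=2\sqrt{-1}\,\partial\bar\partial\phi$.

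For the metric formula I would simply transform the expression for $g$ in \eqref{eq:id2} from the $x$-variables to the $y$-variables. The angular part is immediate, since $\sum_{i,j}H_{ij}\,\theta_i\otimes\theta_j=\sum_{i,j}\phi_{ij}\,\theta_i\otimes\theta_j$ by $\phi_{ij}=H_{ij}$. For the base part, inserting $dy_k=\sum_m G_{km}\,dx_m$ into $\sum_{k,l}\phi_{kl}\,dy_k\otimes dy_l$ and using $\phi_{kl}=H_{kl}$ produces the coefficient $\sum_{k,l}H_{kl}G_{km}G_{lp}$, which equals $G_{mp}$ because $\mathbf G\mathbf H\mathbf G=\mathbf G$; hence $\sum_{k,l}\phi_{kl}\,dy_k\otimes dy_l=\sum_{m,p}G_{mp}\,dx_m\otimes dx_p$, matching the $dx\otimes dx$ part of \eqref{eq:id2}. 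Combining the two pieces yields the claimed formula for $g$.

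The only genuinely delicate point is the bookkeeping in the $\partial\bar\partial$ computation: one must track the factors of $\tfrac12$ coming from $y_i=\tfrac12(w_i+\bar w_i)$ and the powers of $\sqrt{-1}$, so that the normalization $2\sqrt{-1}$ reproduces exactly $\omega=\sum_i dx_i\wedge\theta_i$ with no stray constant. Everything else is formal manipulation resting on the Legendre duality $\phi_{ij}=H_{ij}$, the substitution $dx_i=\sum_j H_{ij}\,dy_j$, and the relations $J\theta_i=-dy_i$, $\theta_i=dt_i$ already fixed in the text.
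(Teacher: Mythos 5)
Your computation is correct, and it is the standard Legendre-duality argument: the identities $x_i=\partial\phi/\partial y_i$ and $\phi_{ij}=H_{ij}=(G^{-1})_{ij}$, followed by the expansion of $\partial\bar\partial\phi$ in the coordinates $w_i=y_i+\sqrt{-1}\,t_i$, with the factors of $\tfrac14$ and $-2\sqrt{-1}$ combining to give exactly $\sum_i dx_i\wedge\theta_i$. The paper itself does not prove this lemma but cites \cite[Theorem 4.1]{G94}; your argument is a correct self-contained version of essentially that same proof.
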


Given $P$ as in \eqref{eq:rep3}, we know consider the standard K\"ahler toric orbifold $(M_P^{2n}, \omega_P, J_P, (\C^*)^n, \Phi_P)$ obtained in Theorem \ref{T206}. It follows from \cite[Theorem 1]{Ab01} that its symplectic potential is given by
\begin{align} \label{eq:cansp}
u_P (x) = \frac{1}{2} \sum_{i =1}^N (l_i(x) + a_i ) \log(l_i(x) + a_i),
\end{align}
where $l_i(x)$ is the linear function $l_i(x)=\la m_i n_i,x \ra$. Combining Lemma \ref{lem:kp} and the expression \eqref{eq:cansp}, we have the following result similar to \cite[Theorem 4.5]{G94}. 

\begin{prop}
\label{prop:class}
For the K\"ahler toric orbifold $(M_P^{2n}, \omega_P, J_P, (\C^*)^n, \Phi_P)$, the following conclusions hold.
\begin{enumerate}[label=(\roman*)]
\item The K\"ahler potential $\phi_p$ is given by
\begin{align}
\label{eq:id3}
\phi_P=\frac{1}{2} \sum_{i =1}^N \lc -a_i \log( l_i(x)+a_i) + l_i(x) \rc.
\end{align} 
\item In the orbifold de Rham cohomology $H^2_{\mathrm{dR}}(M_P)$, we have
\begin{align*}
\frac{[\omega_P]}{2\pi}=\sum_{i =1}^N a_i [D_i],
\end{align*}
where $D_i=\Phi_P^{-1}(F_i)$ is the Weil divisor given by the preimage of the $i$-th facet $F_i$ of $P$. 
\end{enumerate}
\end{prop}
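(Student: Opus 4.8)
The plan is to prove the two parts in turn. Part (i) is a direct computation of the Legendre transform of the canonical symplectic potential \eqref{eq:cansp}, and part (ii) then feeds the resulting explicit K\"ahler potential into the Poincar\'e--Lelong formula, using the Delzant construction of Theorem \ref{T206} to identify the logarithmic terms with norms of tautological sections.

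For (i), I would first differentiate $u_P(x)=\tfrac12\sum_{i=1}^N(l_i(x)+a_i)\log(l_i(x)+a_i)$. Since each $l_i(x)=\langle m_in_i,x\rangle$ is linear, one gets $y_k=\partial u_P/\partial x_k=\tfrac12\sum_i(\partial_{x_k}l_i)\bigl[\log(l_i(x)+a_i)+1\bigr]$, and Euler's identity $\sum_k x_k\,\partial_{x_k}l_i=l_i(x)$ gives $\langle y,x\rangle=\tfrac12\sum_i l_i(x)\bigl[\log(l_i(x)+a_i)+1\bigr]$. Subtracting, the terms $l_i\log(l_i+a_i)$ cancel against the corresponding part of $u_P$, leaving $\phi_P=\langle y,x\rangle-u_P=\tfrac12\sum_i\bigl[-a_i\log(l_i(x)+a_i)+l_i(x)\bigr]$, which is exactly \eqref{eq:id3}. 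This part is entirely routine.

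For (ii), the key input is the Delzant construction: the coordinates $z_i$ on $\C^N$ descend to tautological holomorphic sections $s_i$ of the orbifold line bundles $\mathcal{O}(D_i)$ with $\operatorname{div}(s_i)=D_i=\Phi_P^{-1}(F_i)$, and the moment-map identity $\Phi_{\T^N}=l\circ\Phi_P$ on the zero level set yields $\|s_i\|^2=|z_i|^2=2\bigl(l_i(\Phi_P)+a_i\bigr)$ for the Hermitian metric induced from the flat metric on $\C^N$. Substituting $\log(l_i(\Phi_P)+a_i)=\log\|s_i\|^2-\log 2$ into \eqref{eq:id3}, I rewrite $\phi_P=-\tfrac12\sum_i a_i\log\|s_i\|^2+\tfrac12\sum_i l_i(\Phi_P)+\mathrm{const}$, where the second sum is a globally smooth $\T^n$-invariant function on $M_P$ (being linear in the smooth moment map) while the logarithmic terms are locally integrable with singularities along the $D_i$. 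By Lemma \ref{lem:kp}, $\omega_P=2\sqrt{-1}\,\partial\bar\partial\phi_P$ on $\mathring{M}$. I would then invoke the Poincar\'e--Lelong formula $\tfrac{\sqrt{-1}}{2\pi}\partial\bar\partial\log\|s_i\|^2=[D_i]-c_1(\mathcal{O}(D_i))$ as currents on $M_P$ and match the smooth parts on the dense open set $\mathring{M}$; since two smooth forms agreeing on $\mathring{M}$ agree everywhere, this produces the global identity
\[
\frac{\omega_P}{2\pi}=\sum_{i=1}^N a_i\,c_1(\mathcal{O}(D_i))+\frac{\sqrt{-1}}{2\pi}\,\partial\bar\partial\Bigl(\sum_{i=1}^N l_i(\Phi_P)\Bigr).
\]
The last term is exact, and $c_1(\mathcal{O}(D_i))$ represents $[D_i]$ in $H^2_{\mathrm{dR}}(M_P)$ via Theorem \ref{thm:der}; passing to cohomology classes gives $\tfrac{[\omega_P]}{2\pi}=\sum_i a_i[D_i]$.

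The main obstacle I anticipate is the analytic bookkeeping in (ii): I must justify the Poincar\'e--Lelong formula in the orbifold category (where $\mathcal{O}(D_i)$ is an orbi-line bundle and $D_i$ carries generic structure group $\Z/m_i\Z$) and verify cleanly that the singular contributions are precisely the discrepancy between the current $\partial\bar\partial[\phi_P]$ and the smooth form $\omega_P$, so that matching smooth parts on $\mathring{M}$ is legitimate. Identifying the cycle class of the Weil divisor $D_i$ with $c_1(\mathcal{O}(D_i))$ in the orbifold de Rham cohomology, together with the precise normalization $\|s_i\|^2=2(l_i(\Phi_P)+a_i)$ coming from the reduction, are the points requiring the most care; the computation in (i) and the exactness of the linear term are straightforward.
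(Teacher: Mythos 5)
Your proof of (i) is correct and coincides with the paper's (which simply declares the Legendre transform ``immediate by direct computation''); your explicit cancellation of the $l_i\log(l_i+a_i)$ terms is exactly that computation. For (ii) you reach the same conclusion by a recognizable variant of the paper's argument rather than an identical one. The paper reduces to showing $-\tfrac{\sqrt{-1}}{2\pi}[\partial\bar\partial\log\phi]=[D_i]$ for the shifted circle moment map $\phi=\la n_i,\Phi_P\ra+a_i/m_i$ (note $l_i+a_i=m_i\phi$, so this is the same function up to a constant), and establishes it \emph{locally}: near each point of $D_i$ it linearizes the circle action in a chart $(\tilde U,\Gamma_p)$, writes $\tilde\phi=|z_1|^2h$ with $h>0$ smooth, and reads off that the class is the first Chern class of the orbifold line bundle of $D_i$ from the transition data. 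You instead \emph{globalize}: the Delzant quotient supplies a tautological section $s_i$ of $\mathcal O(D_i)$ with $\|s_i\|^2=|z_i|^2=2(l_i(\Phi_P)+a_i)$ (your normalization is right for the paper's form $\sum_j\tfrac{\sqrt{-1}}{2}dz_j\wedge d\bar z_j$ and moment map $\tfrac12\sum|z_j|^2e_j^*$), and you invoke Poincar\'e--Lelong once. The two arguments are two faces of the same identity --- the paper's local expression $|z_1|^2h$ is precisely $\|s_i\|^2$ in a trivialization --- so neither is more general; your version buys a cleaner global bookkeeping at the cost of having to justify Poincar\'e--Lelong and the descent of $z_i$ to an orbifold line bundle section in the orbifold category, which is exactly the content the paper's local chart argument supplies directly. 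Your handling of the smooth remainder $\sum_i l_i(\Phi_P)$ (globally smooth, hence $\sqrt{-1}\partial\bar\partial$-exact as a de Rham class) and the matching of smooth parts on the dense set $\mathring{M}$ are both sound.
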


\begin{proof}
Since $\phi_P$ is the Legendre transformation of $u_P$, \eqref{eq:id3} is immediate by direct computation. To prove (ii), we only need to show that
\begin{align}
\label{eq:id5}
-\frac{\sqrt{-1}}{2\pi} [\partial \bar \partial \log( l_i(x)+a_i)]= [D_i].
\end{align}
in $H^2_{\mathrm{dR}}(M_P)$. Following \cite[Theorem 6.2]{G94}, we sketch the proof. Let $S^1$ be the circle of $\T^n$ generated by $n_i$. Then, the corresponding moment map, denoted by $\phi$, is $\la n_i, \Phi_P \ra+a_i/m_i$ after a translation. In particular, $\phi \ge 0$ on $M$ and $D_i=\phi^{-1}(0)$.

Thus, \eqref{eq:id5} follows from
\begin{align}
\label{eq:id5a}
-\frac{\sqrt{-1}}{2\pi} [\partial \bar \partial \log \phi]= [D_i].
\end{align}
Indeed, since $D_i$ is a complex suborbifold, for each $p \in D_i$, one can linearize the circle action. That is, there exists a holomorphic local chart $(\tilde U, \Gamma_p)$ around $p$ with complex coordinates $(z_1,\cdots, z_n)$ so that the lift of $U \cap D_i$ is the set where $z_1=0$, and the lift of the circle acts as
\begin{align*}
e^{2\pi \sqrt{-1} \theta}(z_1,z_2,\cdots,z_n)=(e^{2\pi \sqrt{-1} \theta} z_1,z_2,\cdots, z_n).
\end{align*}
In addition, the lift of $\phi$ on $\tilde U$, denoted by $\tilde \phi$, satisfies $\tilde\phi=|z_1|^2 h$, where $h$ is a smooth positive function on $\tilde U$. By considering all such open sets like $U$ and their transitions maps, we conclude that $-\frac{\sqrt{-1}}{2\pi} [\partial \bar \partial \log \phi]$ represents exactly the first Chern class of the orbifold line bundle corresponding to $D_i$. Therefore, \eqref{eq:id5a} holds.
\end{proof}

As a corollary, we prove

\begin{cor} \label{cor:class}
Let $(M^{2n}, \omega,J, (\C^*)^n, \Phi)$ be a K\"ahler toric orbifold with $(\dagger)$. Then, the labeled polyhedron $\Phi(M)$ is determined by the class $[\omega]$ in $H^2_{\mathrm{dR}}(M)$.
\end{cor}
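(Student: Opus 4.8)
The plan is to recover the labeled polyhedron from $[\omega]$ in two independent stages: its combinatorial type together with the facet labels is already determined by the toric complex structure $(M,J,(\C^*)^n)$, while the affine constants defining the facets are pinned down cohomologically through Proposition \ref{prop:class}. Concretely, I would fix the data $(M,J,(\C^*)^n)$ and take two $\T^n$-invariant K\"ahler forms $\omega,\omega'$ compatible with $J$, both satisfying $(\dagger)$, with moment maps $\Phi,\Phi'$ and labeled polyhedra $P=\Phi(M)$, $P'=\Phi'(M)$; it suffices to show that $[\omega]=[\omega']$ in $H^2_{\mathrm{dR}}(M)$ forces $P\cong P'$ as labeled polyhedra.

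For the first stage, by Theorem \ref{thm:ka} (equivalently Theorem \ref{thm:to}) the space $(M,J,(\C^*)^n)$ is a toric variety whose fan equals the normal fan both of $P$ and of $P'$. Hence $P$ and $P'$ share the same combinatorial type and the same primitive facet normals $\{n_i\}$, so that $P=\bigcap_i H^+_{m_i n_i,a_i}$ and $P'=\bigcap_i H^+_{m_i n_i,a_i'}$ with possibly different constants. The labels coincide as well: $m_i$ is the order of the structure group along the torus-invariant divisor $\mathcal D_i\subset M$ attached to the $i$-th ray, which is intrinsic to the complex orbifold and independent of the symplectic form.

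For the second stage I would translate the constants into cohomology. Let $\mathcal D_i\subset M$ denote the invariant divisor of the $i$-th ray. By Theorem \ref{thm:to} there is a $(\C^*)^n$-equivariant biholomorphism $\varphi\colon(M_P,J_P)\to(M,J)$ with $[\varphi^*\omega]=[\omega_P]$; being equivariant and ray-preserving, $\varphi$ carries $D_i=\Phi_P^{-1}(F_i)$ onto $\mathcal D_i$, so $\varphi^*[\mathcal D_i]=[D_i]$. Combining this with Proposition \ref{prop:class} (ii) and applying the isomorphism $(\varphi^*)^{-1}$ gives
\[
[\omega]=2\pi\sum_i a_i[\mathcal D_i]\quad\text{in } H^2_{\mathrm{dR}}(M),
\]
and likewise $[\omega']=2\pi\sum_i a_i'[\mathcal D_i]$ with the \emph{same} classes $[\mathcal D_i]$, since the $\mathcal D_i$ are intrinsic to $(M,J,(\C^*)^n)$. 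Thus the hypothesis $[\omega]=[\omega']$ becomes $\sum_i(a_i-a_i')[\mathcal D_i]=0$.

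Finally I would read off the translation. The classes $\{[\mathcal D_i]\}$ generate $H^2_{\mathrm{dR}}(M)$ (cf. \cite[Theorem 4.2.1, Theorem 12.3.2]{CLS11}), and their relations are exactly those of the Delzant sequence $0\to\lt^*\xrightarrow{\varpi^*}(\R^N)^*\xrightarrow{i^*}\mathfrak k^*\to0$ from Theorem \ref{T206}: one has $\sum_i c_i[\mathcal D_i]=0$ if and only if $(c_i)_i=\varpi^*(c)$ for some $c\in\lt^*$, i.e. $c_i=\langle c,m_i n_i\rangle$. Applying this to $c_i=a_i-a_i'$ produces $c\in\lt^*$ with $a_i-a_i'=\langle c,m_i n_i\rangle$ for all $i$, which is precisely the statement that $P'=P+c$; hence $P$ and $P'$ are isomorphic labeled polyhedra. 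I expect the main technical point to be this last step—identifying $H^2_{\mathrm{dR}}(M)$ with the divisor class group of the toric orbifold and matching the vanishing relation $\sum_i(a_i-a_i')[\mathcal D_i]=0$ with an honest translation of the polyhedron—together with the bookkeeping ensuring that the distinguished basis $\{[\mathcal D_i]\}$ used for $\omega$ and $\omega'$ is literally the same.
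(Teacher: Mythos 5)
Your proposal is correct and follows essentially the same route as the paper: recover the fan, the primitive normals $\{n_i\}$ and the labels $m_i$ from the complex toric orbifold structure, express $[\omega]/2\pi=\sum_i a_i[D_i]$ via Theorem \ref{thm:to} and Proposition \ref{prop:class}(ii), and then use the short exact sequence $0\to\lt^*\to\mathcal D\to H^2_{\mathrm{dR}}(M)\to 0$ to conclude that the constants $a_i$ are determined up to the additive terms coming from $\lt^*$, i.e.\ up to a translation of $P$. Your reformulation as a comparison of two forms with equal classes, and your writing of the kernel relation as $c_i=\langle c,m_in_i\rangle$ (matching the Delzant map $\varpi(e_i)=m_in_i$ of Theorem \ref{T206}, which is the normalization that literally produces a translation of the labeled polyhedron), are only cosmetic refinements of the paper's argument.
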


\begin{proof}
We set $P=\Phi(M)$, represented as in \eqref{eq:rep3}, and let $D_i=\Phi^{-1}(F_i)$. First, note that the toric variety $(M^{2n}, J, (\C^*)^n)$ itself determines the fan associated with $P$, and hence all primitive normal vectors $\{n_i\}$. In addition, the isotropic group $\T_x$ is generated by $n_i$, for any $x \in \mathring{D_i}$. Therefore, it follows from Theorem \ref{T206} that the label $m_i$ for the $i$-th facet is determined by the structure group at $x$, for any $x \in \mathring{D_i}$.

Furthermore, it follows from Theorem \ref{thm:to} and Proposition \ref{prop:class} (ii) that
\begin{align*}
\frac{[\omega]}{2\pi}=\sum_{i =1}^N a_i [D_i].
\end{align*}
By \cite[Theorem 4.2.1,Theorem 12.3.2]{CLS11} and Proposition \ref{thm:der}, we have the following short exact sequence
\begin{align*}
0 \longrightarrow \lt^* \overset{i}{\longrightarrow} \mathcal D \longrightarrow H^2_{\mathrm{dR}}(M) \longrightarrow 0,
\end{align*}
where $\mathcal D:=\{\sum_i c_i D_i \mid c_i \in \R\}$, and the map $i$ is given by 
\begin{align*}
i(w)=\sum_i \la w, n_i \ra D_i,
\end{align*}
for $w \in \lt^*$. Therefore, once $[\omega]$ is given, $a_i$ is determined up to an additive term $\la w, n_i \ra$ for some $w \in \lt^*$. Consequently, $P$ can be written, up to a translation, as
\begin{align*}
P = \bigcap_{i = 1}^{N} \{ x \in \lt^* \; | \langle x, m_i n_i \rangle+a_i \geq 0\}.
\end{align*} 
In sum, the associated labeled polyhedron is determined by $[\omega]$.
\end{proof}

For $(M_P^{2n}, \omega_P, J_P, (\C^*)^n, \Phi_P)$, we fix a momentum-angle coordinates $\{x_1,\cdots,x_n; t_1,\cdots,t_n\}$ on $\mathring{M_P}$. When the $\T^n$-invariant, compatible complex structure varies on $M_P$, we compare the corresponding metrics.

Let $J$ be another $\T^n$-invariant, compatible complex structure on $(M_P, \omega_P, \T^n)$. We set $g=\omega_P(\cdot, J \cdot)$. Similar to the discussions above, there exists a smooth, strictly convex function $u$ on $\mathring{P}$ such that
\begin{align}
g= u_{ij} dx_i \otimes dx_j+u^{ij} dt_i \otimes dt_j, \label{eq:kahs}
\end{align} 
where $(u_{ij})=\mathrm{Hess}(u)$ and $(u^{ij})$ its inverse. Note that the invariant complex structure $J$ is given by $J dt_i=-u_{ij} dx_j$. Conversely, any smooth, strictly convex function $u$ on $\mathring{P}$ defines by \eqref{eq:kahs} an invariant metric. Now, the following result was proved in \cite[Theorem 2]{Ab03}, see also \cite[Proposition 1]{ACGT04}.

\begin{thm}
\label{thm:bound}
With the above assumptions, the function $u$ determined in \eqref{eq:kahs} by a $\T^n$-invariant, compatible complex structure $J$ satisfies the following:
\begin{enumerate}[label=(\roman*)]
\item $u-u_P$ is smooth on $P$;
\item The function $\det \lc\mathrm{Hess}\, u \rc \prod_{i=1}^N \lc l_i(x)+a_i\rc$ is positive and smooth on $P$. 
\end{enumerate}
Conversely, any smooth, strictly convex function $u$ on $\mathring{P}$ satisfying (i) and (ii) defines an invariant K\"ahler metric on $M_P$ via \eqref{eq:kahs}. 
\end{thm}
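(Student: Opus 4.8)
The plan is to reduce the global smoothness statement to a purely local analysis near the preimage of each open face of $P$, and then to match the boundary behaviour of $u$ against the explicit Guillemin potential $u_P$ of \eqref{eq:cansp}. Over the dense open orbit $\mathring{M_P} = \Phi_P^{-1}(\mathring{P})$, the formulas in \eqref{eq:kahs} already show that any smooth strictly convex $u$ yields a smooth K\"ahler metric and that, conversely, the metric determines such a $u$; hence the entire content of the theorem lies in the compatibility of $g$ (equivalently of $J$) with the orbifold structure across the singular strata $\Phi_P^{-1}(\partial P)$. By Corollary \ref{cor:slice1} these strata are modelled, near the interior of each face, on standard local pieces, so it suffices to understand the behaviour near the interior of a single facet and then to verify that the resulting conditions are mutually compatible where facets meet.

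First I would normalise a facet. Fixing $F_i$ with primitive conormal $n_i$ and label $m_i$, an element of $\mathrm{GL}(n,\Z)$ together with a translation lets me assume $n_i = e_1$ and $l_i(x) + a_i = m_i x_1$, so that $\mathring{F_i} \subset \{x_1 = 0\}$ while the remaining $l_j(x) + a_j$ stay positive and smooth there. In these coordinates the local model is $(\C \times (\C^*)^{n-1})/(\Z/m_i\Z)$, with the cyclic group acting on the first factor, and the divisor $D_i = \Phi_P^{-1}(F_i)$ is $\{z_1 = 0\}$ with structure group $\Z/m_i\Z$. The explicit form of $u_P$ near $\mathring{F_i}$ is $\tfrac{1}{2} m_i x_1 \log(m_i x_1)$ plus a function smooth in $x$, whence $y_1 = \partial u_P/\partial x_1 = \tfrac{m_i}{2}\log x_1 + (\text{smooth})$ and $x_1$ behaves like a constant times $|z_1|^{2/m_i}$; this is precisely the relation making the K\"ahler potential $\phi_P$ of \eqref{eq:id3} smooth in the uniformising coordinate $w$ with $w^{m_i} = z_1$.

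The core step is then to show that, writing $u = u_P + (u - u_P)$, the complex structure $J$ (hence the metric $g$) extends smoothly across $\{z_1 = 0\}$ in the orbifold chart if and only if $u - u_P$ extends smoothly to $P$ and the normal degeneration of $\mathrm{Hess}\,u$ is of the exact order dictated by $u_P$. Condition (i) guarantees that $u - u_P$ perturbs $J_P$ only by a term smooth in the holomorphic coordinates, so $J$ remains a genuine complex structure up to the divisor; condition (ii) is the non-degeneracy requirement. Indeed, a direct computation shows $\det(\mathrm{Hess}\,u_P)$ has a simple pole of the form $c/(l_i+a_i)$ transverse to $\mathring{F_i}$, so multiplying by $\prod_{i} (l_i + a_i)$ clears precisely these poles, and positivity plus smoothness of the product is equivalent to $g$ being a positive-definite smooth metric on the boundary strata. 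I would establish this equivalence first for $u_P$ itself, where it reduces to the computation behind \eqref{eq:cansp} and Proposition \ref{prop:class} already available to us, and then for general $u$ by treating $u - u_P$ as a smooth perturbation. Compatibility at codimension-$\ge 2$ faces follows from the product structure of the local models, so assembling the local statements gives both the forward implication and, running the construction backwards (defining $J$ on $\mathring{M_P}$ by $J\,dt_i = -u_{ij}\,dx_j$ and extending across $\Phi_P^{-1}(\partial P)$), the converse.

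The main obstacle I anticipate is the facet computation with the orbifold group $\Z/m_i\Z$: one must track how the uniformising coordinate $w = z_1^{1/m_i}$ interacts with the log-singular term of $u_P$ and verify that the determinant condition (ii) is exactly the condition for smoothness of $g$ in $w$, and not merely smoothness modulo the pole. Keeping the normalisation of $m_i$ correct throughout, so that the cone structure of the orbifold chart matches the integer label attached to $F_i$, is where the argument is most delicate, and it is precisely the point at which the orbifold case departs from the manifold computations of \cite{G94} and \cite{Ab03}.
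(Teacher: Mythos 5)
The paper does not actually prove Theorem \ref{thm:bound}: it quotes the result directly from \cite[Theorem 2]{Ab03} and \cite[Proposition 1]{ACGT04}, so there is no in-text argument to compare against. Your sketch is, in outline, a correct reconstruction of the standard Abreu--Guillemin boundary analysis that underlies those references, and you have correctly isolated the one genuinely orbifold-specific point: with the facet normalised so that $l_i(x)+a_i=m_ix_1$, the Guillemin potential gives $y_1=\tfrac{m_i}{2}\log x_1+(\text{smooth})$, hence $x_1\sim|z_1|^{2/m_i}=|w|^2$ for the uniformising coordinate $w$ with $w^{m_i}=z_1$, which is exactly why the label $m_i$ (and not the primitive conormal alone) enters the model potential \eqref{eq:cansp}. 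The identification of the simple pole of $\det(\mathrm{Hess}\,u_P)$ along each facet, and the reduction of higher-codimension faces to products of facet models, are also as in the cited sources.

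Two steps in your sketch deserve more care than you give them. First, the passage from ``$u-u_P$ is smooth on $P$'' to ``the metric extends smoothly on $M_P$'' rests on the fact that a $\T^n$-invariant function on the orbifold chart $\C^n/\Gamma$ is smooth precisely when it is a smooth function of the (normalised) moment coordinates $|w_1|^2,\dots,|w_n|^2$; this is where the orbifold chart genuinely enters, and it is the reason condition (i) is stated relative to $u_P$ with its labels rather than relative to the unlabelled Guillemin potential. Second, the holomorphic coordinates $z_j=\exp(y_j+\sqrt{-1}t_j)$ are built from $y_j=\partial u/\partial x_j$, i.e.\ from $u$ itself, not from $u_P$; your phrase ``perturbs $J_P$ by a term smooth in the holomorphic coordinates'' conflates the two coordinate systems, and the clean way to argue is to show that (i) and (ii) force the two systems to differ by a diffeomorphism smooth up to the divisor. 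Neither point is an error of strategy --- both are handled in \cite{Ab03} and \cite{ACGT04} --- but as written they are the places where your argument is a gesture rather than a proof.
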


\subsection{Geometry of Ricci shrinker orbifolds}

In this section, we consider the Riemannian Ricci shrinker orbifolds and prove some fundamental estimates. Most estimates and proofs are adapted from those for Ricci shrinkers on manifolds.

\begin{defn}[Ricci shrinker orbifold]
A Ricci shrinker orbifold $(M^n, g, f)$ is a complete Riemannian orbifold (cf. Definition \ref{def:ro}) coupled with a smooth function $f:M \to \R$ such that
\begin{align}\label{eq:rso}
\Rc+\na^2 f=\frac{1}{2} g.
\end{align} 
\end{defn}

As in the manifold case, the function $R+|\na f|^2-f$ is always a constant on a Ricci shrinker orbifold. Therefore, we normalize $f$ by adding a constant so that 
\begin{align}\label{eq:rso1}
R+|\na f|^2=f.
\end{align} 

It is well-known that any Ricci shrinker on a manifold has nonnegative scalar curvature (cf. \cite{CBL07} and \cite{Zh09}). We demonstrate that a Ricci shrinker orbifold also has nonnegative scalar curvature if it is bounded below. Furthermore, the scalar curvature must be positive unless the orbifold is flat.

\begin{prop}\label{prop:sca}
Let $(M^n,g,f)$ be a Ricci shrinker orbifold such that $\inf_{|M|} R>-\infty$. Then $R>0$ everywhere unless $(M,g)$ is isometric to the flat cone $\R^n/\Gamma$, where $\Gamma \leq \mathrm{O}(n)$ is a finite subgroup.
\end{prop}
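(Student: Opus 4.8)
The plan is to follow the classical argument of Chen--B.-L. Wang and Z.-H. Zhang for the manifold case, checking that each step survives the passage to orbifolds. The starting point is the standard trace and contracted second Bianchi identities for a Ricci shrinker. Taking the trace of \eqref{eq:rso} gives $R + \Delta f = \tfrac{n}{2}$, and contracting the defining equation together with the second Bianchi identity yields the well-known evolution identity
\begin{align*}
\Delta_f R = \na_{\na f} R - \Delta R = R - 2|\Rc|^2,
\end{align*}
where $\Delta_f = \Delta - \na_{\na f}\cdot$ is the drift Laplacian. All of these are local, pointwise tensor identities, so they hold verbatim on the regular part $M_{\mathrm{reg}}$, which is an honest manifold; the only subtlety is to argue that the relevant maximum principle argument is not spoiled by the singular set.

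\textbf{Main steps.} First I would establish $R \ge 0$. Using $|\Rc|^2 \ge \tfrac{1}{n} R^2$, the evolution identity gives the differential inequality $\Delta_f R \le R - \tfrac{2}{n} R^2$. Since $R$ is bounded below by assumption, if $R$ were negative somewhere one would like to apply a minimum principle. The clean way is to work with the weighted measure $e^{-f}\,dV$: the drift Laplacian $\Delta_f$ is self-adjoint with respect to $e^{-f}\,dV$, the potential $f$ is proper and grows quadratically (this is a standard consequence of \eqref{eq:rso1} together with completeness), and so one has a generalized maximum principle of Omori--Yau type for $\Delta_f$ on the complete weighted orbifold. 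Applying it to a point approaching the infimum of $R$ forces $\tfrac{2}{n} R^2 \le R$ there, hence $R \ge 0$ at the infimum, giving $R \ge 0$ everywhere. Second, for the strong statement, suppose $R(p) = 0$ at some point. On $M_{\mathrm{reg}}$ the strong maximum principle applied to $\Delta_f R \le R - \tfrac{2}{n} R^2 \le R$ (so that $R$ is a supersolution of a linear equation vanishing at its minimum $0$) forces $R \equiv 0$ on the connected regular part, and then by density and continuity $R \equiv 0$ on all of $|M|$. Plugging $R \equiv 0$ back into the identity $\Delta_f R = R - 2|\Rc|^2$ gives $|\Rc|^2 \equiv 0$, so the orbifold is Ricci-flat; combined with \eqref{eq:rso} this gives $\na^2 f = \tfrac12 g$, the rigidity equation for a Gaussian shrinker, which forces $(M,g)$ to be a flat cone $\R^n/\Gamma$ with $\Gamma \le \mathrm{O}(n)$ finite.

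The two points requiring care in the orbifold setting are the maximum-principle arguments and the final rigidity identification. For the maximum principles, the key is that a minimizing sequence for $R$ may a priori run into the singular set; here I would either invoke that $M_{\mathrm{reg}}$ is geodesically convex (stated in the excerpt) and that $R$ extends continuously across $|M|_{\mathrm{sing}}$, or lift locally to a good chart $(\tilde U, \Gamma)$, where $R$ pulls back to a $\Gamma$-invariant smooth function satisfying the same drift inequality on the honest manifold $\tilde U$, and run the maximum principle upstairs. For the strong maximum principle in particular, linearizing as above and lifting to charts makes the singular points harmless. For the rigidity step, the equation $\na^2 f = \tfrac12 g$ says $f - \tfrac14 d(\cdot, q)^2$ is affine after identifying the unique critical point $q$ of $f$; lifting to the universal orbifold cover and applying the usual splitting/rigidity for gradient Gaussian shrinkers identifies the cover with flat $\R^n$, and the deck group $\pi_1^{\mathrm{orb}}(M)$ acts as a finite subgroup of $\mathrm{O}(n)$ fixing the cone point, yielding $\R^n/\Gamma$.

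\textbf{Main obstacle.} I expect the principal difficulty to be the maximum-principle analysis near the singular set rather than the algebra of the Bianchi identities. Ensuring that the Omori--Yau (or weighted) minimum principle for $\Delta_f$ is valid on a complete \emph{orbifold} with quadratic potential growth, and that a minimizing sequence for $R$ can be controlled even if it accumulates on $|M|_{\mathrm{sing}}$, is the crux; the chart-lifting trick, using $\Gamma$-invariance of all the geometric quantities, is what I would rely on to reduce everything to the familiar manifold statements.
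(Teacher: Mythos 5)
Your overall strategy (nonnegativity of $R$ via a drift-Laplacian maximum principle applied to $\Delta_f R = R - 2|\Rc|^2$, then the strong maximum principle and the rigidity equation $\na^2 f = \tfrac12 g$) is the same as the paper's, and the first two stages are essentially sound. Two remarks there. First, your displayed identity has a sign slip: with your own convention $\Delta_f=\Delta-\na_{\na f}$ it should read $\Delta_f R=\Delta R-\na_{\na f}R=R-2|\Rc|^2$; the final expression is the one you actually use, so nothing downstream is affected. Second, where you invoke an Omori--Yau principle for $\Delta_f$ on the weighted orbifold, the paper instead sets $\bar f=f+B$ (so that $|\na\bar f|^2\le\bar f$ and $\bar f$ is proper by the Cao--Zhou growth estimate, which does use the geodesic convexity of $M_{\mathrm{reg}}$) and multiplies $R$ by the explicit cutoff $\eta(\bar f/A)$; the product $u=\eta(\bar f/A)R$ is a compactly supported smooth function on the orbifold, its minimum point is handled inside a single local chart, and letting $A\to\infty$ gives $R\ge0$. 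This sidesteps having to establish an Omori--Yau principle on orbifolds, which is the extra machinery your route would require; your chart-lifting remark is the right local idea, but you would still need to control a minimizing sequence globally, which the cutoff does for free.

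The genuine gap is in your final rigidity step. You propose to pass to the universal orbifold cover and apply the manifold classification of Gaussian shrinkers there. But the universal cover of an orbifold is again an orbifold, and it is a manifold only when $M$ is developable; bad orbifolds are their own universal covers and retain singular points. At this stage of the argument you only know that $M$ is Ricci-flat with $\na^2 f=\tfrac12 g$, and nothing yet rules out non-developability, so the manifold rigidity theorem cannot be invoked on the cover. The paper avoids this entirely: from $\na^2 f=\tfrac12 g$ and the minimum point $p$ of $f$ one gets $f=\tfrac14 r^2+f(p)$ (using minimizing geodesics inside $M_{\mathrm{reg}}$), hence $\mathcal L_{\na r}g=\tfrac2r(g-dr^2)$ and $g=dr^2+r^2 g_\Sigma$, a metric cone over the level set $\{f=1/4\}$; the decisive input is then the smoothness of the orbifold metric at the vertex $p$, which forces the cone to coincide with the tangent cone $\R^n/\Gamma_p$ at $p$, i.e.\ the flat cone. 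Some use of the orbifold chart at the critical point of $f$ is unavoidable here, and your sketch omits it.
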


\begin{proof}
We first prove $R \ge 0$ on $M$. By our assumption, we assume $\inf_{|M|} R>-B$ for a positive constant $B$. We set $\bar f=f+B$, then by \eqref{eq:rso} and \eqref{eq:rso1},
\begin{align}
\Rc+\na^2 \bar f=\frac{1}{2} g \quad \text{and} \quad |\na \bar f|^2 \le \bar f. \label{eq:rso2}
\end{align} 
In particular, it implies that $\bar f \ge 0$ and $\sqrt{\bar f}$ is $1/2$-Lipschitz on $|M|$. We fix a point $p \in M_{\mathrm{reg}}$. Then, it follows from the geodesically convexity of $M_{\mathrm{reg}}$ and the same argument as \cite[Proposition 2.1]{CZ10} (see also \cite[Lemma 2.1]{HM11} and \cite[Lemma 1]{LW20}) that for any $x \in M_{\mathrm{reg}}$,
\begin{align}
\frac{d(p,x)}{2}+\frac{1}{3}-2n \le \sqrt{\bar f(p)}+\sqrt{\bar f(x)}. \label{eq:rso3}
\end{align} 
Since $\bar f$ is smooth and $M_{\mathrm{reg}}$ is dense, it is clear that \eqref{eq:rso3} holds for any $x \in |M|$. In particular, $\bar f$ is proper and bounded below.

Next, we recall the definition of weighted Laplacian $\Delta_f:=\Delta-\la \na f, \cdot \ra$. The following identities then hold:
\begin{align}
\Delta_f \bar f=&\frac{n}{2}-\bar f, \label{eq:rso4}\\
\Delta_f R=&R-2|\Rc|^2. \label{eq:rso5}
\end{align} 
Note that both identities are well-known for Ricci shrinkers on manifolds (see, for instance, \cite{PW10}), and hence, they hold on Ricci shrinker orbifolds if one uses the local charts. We take a smooth decreasing cutoff function $\eta(t)$ on $\R$ such that $\eta=1$ on $(-\infty,1]$ and $\eta=0$ on $[2,\infty)$. Additionally, we require that
\begin{align} 
\frac{(\eta')^2}{\eta}+|\eta''| \le C \label{eq:rso6}
\end{align} 
for a universal constant $C$.

For any constant $A>1$, we set $\varphi=\eta(\bar f/A)$ and $u=\varphi R$, which are smooth functions on $M$. From direct computation by using \eqref{eq:rso2}, \eqref{eq:rso4} and \eqref{eq:rso5}, we have
\begin{align}
\Delta_f u=&\varphi \Delta_f R+R\Delta_f\phi+2\la \na R, \na \varphi \ra \notag \\ 
=& \varphi (R-2|\Rc|^2)+R \lc \eta' \frac{n/2-\bar f}{A}+\eta'' \frac{|\na \bar f|^2}{A^2} \rc+2\frac{\eta'}{A} \la \na R, \na f \ra. \label{eq:rso7}
\end{align} 
Suppose that $u$ takes its minimum value at $q \in |M|$. By the maximum principle, considering the local chart around $q$ if necessary, we have at $q$:
\begin{align*}
\Delta_f u \ge 0 \quad \text{and} \quad \na u=\varphi \na R+\frac{\eta'}{A} R\na \bar f=0.
\end{align*}
Combining with \eqref{eq:rso6} and \eqref{eq:rso7}, we obtain at $q$:
\begin{align}\label{eq:rso8}
0\le \varphi (R-2R^2/n)+R \lc \eta' \frac{n/2-\bar f}{A}+\eta'' \frac{|\na \bar f|^2}{A^2}-2\frac{(\eta')^2}{\varphi} \frac{|\na \bar f|^2}{A^2} \rc. 
\end{align} 
Notice that $|\na \bar f|^2 \le \bar f \le 2A$ on the support of $u$, and $\eta' \le 0$. If $u(q) \le 0$, it follows from \eqref{eq:rso6} and \eqref{eq:rso8} that
\begin{align*}
u(q) \ge -\frac{C}{A},
\end{align*}
for a constant $C$ depending on $n$. By letting $A \to +\infty$, we conclude that $R \ge 0$ everywhere.

If there exists $x_0 \in |M|$ such that $R(x_0)=0$, it follows from \eqref{eq:rso5} and the strong maximum principle that $R \equiv 0$ on $M$. By \eqref{eq:rso5} again, we conclude that $\Rc \equiv 0$ and hence 
\begin{align}\label{eq:rso9}
\na^2 f=\frac{g}{2}
\end{align} 
on $M$. As discussed above, $f$ is proper and bounded below, so there exists a minimum point $p$, which may be a singular point. Let $r:=d(p,\cdot)$, and we claim
\begin{align}\label{eq:rso10}
f=\frac{r^2}{4}+f(p).
\end{align}
Indeed, for any $x \in M_{\mathrm{reg}}$, there exists a unit minimizing geodesic $\gamma(t): t\in [0,r(x)]$ starting from $p$ to $x$ such that $\gamma((0,r(x)]) \subset M_{\mathrm{reg}}$. Since $p$ is a minimum point of $f$, it follows immediately from \eqref{eq:rso9} that \eqref{eq:rso10} holds at $x$. Since $M_{\mathrm{reg}}$ is dense, \eqref{eq:rso10} holds everywhere.

Given that $|\na f|^2=f$, it follows from \eqref{eq:rso9} and \eqref{eq:rso10} that, away from $p$,
\begin{align*}
\mathcal L_{\na r} g=\frac{2}{r} (g-dr^2),
\end{align*}
which implies that for any vector field $U,V$ such that $[U,\na r]=[V,\na r]=0$, we have
\begin{align*}
\partial_r (g(U,V))=(\mathcal L_{\na r} g)(U,V)=\frac{2}{r} g(U,V).
\end{align*}
Therefore, if we denote the level set $f=1/4$ by $(\Sigma, g_{\Sigma})$, which is a Riemannian orbifold, then
\begin{align*}
g=dr^2+r^2 g_{\Sigma},
\end{align*}
which means $(M,g)$ is a cone orbifold. Since the tangent cone at $p$ is a flat cone $\R^n/\Gamma$, where $\Gamma$ is the structure group at $p$, we conclude immediately that $(M,g)$ is isometric to $\R^n/\Gamma$.

In sum, the proof is complete.
\end{proof}

\begin{rem}
For a Ricci shrinker orbifold with a compact singular set $|M|_{\mathrm{sing}}$, it is also true that $R \ge 0$. Indeed, this fact can be proved by the same argument as in \emph{\cite[Theorem 1.3]{Zh09}}, by applying the maximum principle for $\phi R$, where $\phi$ is a cutoff function constructed by using the distance function. Note that one can assume that the singular set is contained in $\{\phi=1\}$. We conjecture that Proposition \ref{prop:sca} still holds without the assumption $\inf_{|M|} R>-\infty$.
\end{rem}

For a Ricci shrinker orbifold, since its regular part is geodesically convex, the following result can be proved as in \cite[Proposition 2.1]{CZ10} or \cite[Lemma 2.1]{HM11}, by using Proposition \ref{prop:sca}.

\begin{prop}\label{prop:estf}
Let $(M^n,g,f)$ be a Ricci shrinker orbifold such that $\inf_{|M|} R>-\infty$. Then there exists a point $p \in |M|$ where $f$ attains its miminum and $f$ satisfies the quadratic growth estimate
\begin{align*}
\frac{1}{4} \lc d(p,x)-5n \rc_+^2 \le f(x) \le \frac{1}{4} \lc d(p,x)+\sqrt{2n} \rc^2,
\end{align*}
for all $x \in |M|$, where $a_+:=\max\{a,0\}$.
\end{prop}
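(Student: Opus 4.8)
The plan is to adapt the geodesic argument of \cite{CZ10} and \cite{HM11} to the orbifold setting, the only genuinely new points being to route every pointwise identity and the second variation inequality through local charts, and to ensure that the relevant minimizing geodesics meet the singular set only at their endpoints. By Proposition \ref{prop:sca} we have $R\ge 0$ on $|M|$, so the normalization \eqref{eq:rso1} gives $|\na f|^2\le f$; hence $f\ge 0$ and $\sqrt{f}$ is $\tfrac12$-Lipschitz on $(|M|,d)$. Moreover, the linear lower bound \eqref{eq:rso3} obtained in the proof of Proposition \ref{prop:sca} shows that $f$ is proper and bounded below, so $f$ attains its minimum at some $p\in |M|$, which may be a singular point.

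For the upper bound I would first control $f(p)$. Tracing \eqref{eq:rso} gives $R+\Delta f=n/2$; working in a good chart $(\tilde U_p,\Gamma_p)$ around $p$, the lift $\tilde f$ has an interior minimum at $\tilde p$, so $\na f(p)=0$ and $\Delta f(p)\ge 0$, whence $f(p)=R(p)\le n/2$. The $\tfrac12$-Lipschitz property of $\sqrt f$ then gives $\sqrt{f(x)}\le\sqrt{n/2}+\tfrac12 d(p,x)$ for all $x$, and squaring yields the upper bound $f(x)\le\tfrac14\lc d(p,x)+\sqrt{2n}\rc^2$. This half is routine.

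The lower bound is the substantive step. Fix $x\in M_{\mathrm{reg}}$ and join it to $p$ by a unit-speed minimizing geodesic $\gamma:[0,s_0]\to|M|$ with $s_0=d(p,x)$. By the geodesic convexity of $M_{\mathrm{reg}}$ and the structure result of Borzellino quoted above, $\gamma\lc(0,s_0)\rc\subset M_{\mathrm{reg}}$, so the standard manifold computations are valid on the interior and extend to the endpoints through local lifts. Writing $\dot f(s)=\tfrac{d}{ds}f(\gamma(s))$, equation \eqref{eq:rso} gives $\ddot f=\tfrac12-\Rc(\dot\gamma,\dot\gamma)$, which integrates, using $\dot f(0)=0$, to the exact identity $\int_0^{s_0}\Rc(\dot\gamma,\dot\gamma)\,ds=\tfrac{s_0}{2}-\dot f(s_0)$. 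Feeding a parallel orthonormal frame transverse to $\dot\gamma$ and the tent cutoff $\phi$ equal to $s$ on $[0,1]$, to $1$ on $[1,s_0-1]$, and to $s_0-s$ on $[s_0-1,s_0]$ into the second variation of arclength gives $\int_0^{s_0}\phi^2\Rc(\dot\gamma,\dot\gamma)\,ds\le 2(n-1)$. I would then split $\int_0^{s_0}\Rc=\int\phi^2\Rc+\int(1-\phi^2)\Rc$, substitute $\Rc(\dot\gamma,\dot\gamma)=\tfrac12-\ddot f$ on the two ramps, and integrate by parts once on each: the boundary term produced at the far endpoint is precisely $-\dot f(s_0)$, which cancels the $-\dot f(s_0)$ in the identity above, while the surviving integrals $\int(\cdots)\dot f\,ds$ are estimated directly by $|\dot f|\le\sqrt f$ and the Lipschitz bound on $\sqrt f$ near each endpoint. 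The only $x$-dependent survivor is a single term $\sqrt{f(x)}$ coming from the far ramp, giving $\tfrac{s_0}{2}-\sqrt{f(x)}\le \tfrac{5n}{2}$ up to absolute constants, i.e. $\sqrt{f(x)}\ge\tfrac12\lc d(p,x)-5n\rc$ and hence $f(x)\ge\tfrac14(d(p,x)-5n)_+^2$ for $x\in M_{\mathrm{reg}}$. Density of $M_{\mathrm{reg}}$ and continuity of $f$ and $d(p,\cdot)$ then extend the inequality to all of $|M|$.

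The main obstacle is the endpoint bookkeeping in this last step: a naive further integration by parts on the far ramp would reintroduce $\int_{s_0-1}^{s_0}f\,ds\sim s_0^2$ and destroy the estimate, so one must stop at the level of $\dot f$ and arrange that exactly one $\sqrt{f(x)}$ term survives, with the cancellation of $-\dot f(s_0)$ doing the essential work. In the orbifold setting this is compounded by the need to verify that $\gamma$ stays in $M_{\mathrm{reg}}$ on its interior (so that the second variation inequality and the pointwise identities are genuinely available) and that the critical-point relation $\dot f(0)=0$ persists when $p$ is singular, both of which I would handle by lifting to the good chart at $p$ and invoking Borzellino's geodesic structure theorem.
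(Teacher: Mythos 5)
Your proposal is correct and follows exactly the route the paper takes: the paper proves this proposition by citing the second-variation argument of \cite{CZ10} and \cite{HM11}, noting only that it transfers to orbifolds because $M_{\mathrm{reg}}$ is geodesically convex and $R\ge 0$ holds by Proposition \ref{prop:sca}. Your reconstruction of that argument (the Lipschitz bound on $\sqrt{f}$, the bound $f(p)\le n/2$ at the minimum via the traced equation in a good chart, and the tent-cutoff second-variation estimate with the cancellation of $-\dot f(s_0)$) is the intended proof, including the two orbifold-specific checks you single out.
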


By using Proposition \ref{prop:estf}, we can follow the same proof as in \cite[Theorem 1.2]{CZ10} or \cite[Lemma 2.2]{HM11} verbatim to show the following volume estimate.

\begin{prop}\label{prop:volume}
Let $(M^n,g,f)$ be a Ricci shrinker orbifold such that $\inf_{|M|} R>-\infty$. Then, there exists a constant $C=C(n)>0$ such that
\begin{align*}
|B(p,r)| \le C r^n,
\end{align*}
where $p$ is a minimum point of $f$.
\end{prop}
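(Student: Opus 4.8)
The plan is to transfer the Cao--Zhou volume-growth argument \cite{CZ10} to the orbifold setting, carrying out every integral computation on the regular part $M_{\mathrm{reg}}$, which is open, dense, geodesically convex, and of full measure. With the normalization $R+|\nabla f|^2=f$ already in force, Proposition \ref{prop:sca} gives $R\ge 0$, so tracing \eqref{eq:rso} yields $\Delta f=\tfrac{n}{2}-R\le\tfrac{n}{2}$, while Proposition \ref{prop:estf} shows that $f$ is proper with quadratic growth. The latter pins the sublevel sets $D(t):=\{f<t\}$ between geodesic balls centred at the minimum point $p$, namely $B(p,2\sqrt{t}-\sqrt{2n})\subset D(t)\subset B(p,2\sqrt{t}+5n)$. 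It therefore suffices to prove $V(t):=|D(t)|\le C\,t^{n/2}$, since substituting $t\sim r^{2}/4$ then recovers $|B(p,r)|\le C r^{n}$.

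For the core estimate I would fix, by Sard's theorem applied to $f$ on $M_{\mathrm{reg}}$, a regular value $t$ and apply the divergence theorem on $D(t)$ to obtain $\int_{\{f=t\}}|\nabla f|\,dA=\int_{D(t)}\Delta f\,dV=\tfrac{n}{2}V(t)-\chi(t)$, where $\chi(t):=\int_{D(t)}R\,dV\ge 0$; in particular $\chi(t)\le\tfrac{n}{2}V(t)$. Combining this with the coarea formula $V'(t)=\int_{\{f=t\}}|\nabla f|^{-1}\,dA$ and the pointwise identity $|\nabla f|^{2}=f-R$ produces the relation $t\,V'(t)-\tfrac{n}{2}V(t)=\chi'(t)-\chi(t)$. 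The remaining analytic input is to control the growth of $\chi$, and for this \eqref{eq:rso5}, i.e.\ $\Delta_f R=R-2|\Rc|^2\le R$, is exactly suited: integrating it as in \cite[Theorem 1.2]{CZ10} bounds the growth of $\chi$ and closes the estimate into a differential inequality of the form $V'(t)\le \tfrac{n}{2t}V(t)\bigl(1+o(1)\bigr)$, whose integration gives $V(t)\le C\,t^{n/2}$.

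The soliton analysis above is identical to the manifold case once $R\ge 0$ and the quadratic growth of $f$ are in hand, so I expect the main obstacle to be purely the orbifold bookkeeping, namely the verification that each analytic step survives the singularities. Concretely I would check that: (a) the divergence theorem and the coarea formula hold on $D(t)$ because $|M|_{\mathrm{sing}}$ has measure zero and, since $f$ and $\nabla f$ lift to $\Gamma$-invariant objects in each local chart, the normal flux of $\nabla f$ through the singular locus vanishes, so no boundary term beyond $\{f=t\}$ appears; (b) the identities \eqref{eq:rso4}, \eqref{eq:rso5} and the integrations by parts are read off in good local charts, just as in the proof of Proposition \ref{prop:sca}; and (c) Sard's theorem and the coarea formula are valid for the orbifold function $f$, so almost every $t$ is a regular value and $\{f=t\}$ is a smooth suborbifold of full $(n-1)$-dimensional measure. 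Once these points are settled, the argument of \cite{CZ10} (equivalently \cite{HM11}) applies essentially verbatim and the proof is complete.
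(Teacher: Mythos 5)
Your proposal is correct and takes essentially the same route as the paper, whose proof simply invokes Proposition \ref{prop:estf} and runs the Cao--Zhou/Haslhofer--M\"uller argument verbatim on the orbifold, with the measure-zero singular set and the geodesic convexity of $M_{\mathrm{reg}}$ handling exactly the bookkeeping points (a)--(c) you list. One minor remark: the upper bound already closes from $0\le\chi(t)\le\tfrac{n}{2}V(t)$ together with your identity $tV'(t)-\tfrac{n}{2}V(t)=\chi'(t)-\chi(t)$ and an integration by parts in $t$, so the evolution equation \eqref{eq:rso5} for $R$ is not actually needed at that step.
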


Next, we prove the following result, which is also well-known for Ricci shrinkers on manifolds (cf. \cite{Lott03} \cite{WW07}).

\begin{prop}\label{prop:fund}
Let $(M^n,g,f)$ be a Ricci shrinker orbifold such that $\inf_{|M|} R>-\infty$. Then, the orbifold fundamental group $\pi_1^{\mathrm{orb}}(M)$ is finite.
\end{prop}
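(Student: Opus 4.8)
The plan is to pass to the orbifold universal cover $\rho: \tilde M \to M$, whose deck transformation group is $\Gamma := \pi_1^{\mathrm{orb}}(M)$, and to derive a contradiction from the infiniteness of $\Gamma$ by comparing the weighted volumes $\int e^{-f}\,dV$ on $M$ and on $\tilde M$.

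First I would observe that for \emph{any} Ricci shrinker orbifold $(N^n, g_N, f_N)$ with $\inf_{|N|} R_N > -\infty$, the weighted volume $\int_N e^{-f_N}\,dV_N$ is finite and strictly positive. Positivity is immediate. For finiteness, let $p \in |N|$ be a minimum point of $f_N$ as in Proposition~\ref{prop:estf}, which gives $f_N(x) \geq \frac14\lc d(p,x) - 5n \rc_+^2$, while Proposition~\ref{prop:volume} gives $|B(p,r)| \leq C(n) r^n$. Decomposing $|N|$ into the annuli $A_k := B(p,k+1)\setminus B(p,k)$ and using $e^{-f_N} \leq e^{-\frac14(d(p,\cdot)-5n)_+^2}$, one obtains
\[
\int_N e^{-f_N}\,dV_N \;\leq\; \sum_{k \geq 0} |A_k|\, e^{-\frac14(k-5n)_+^2} \;\leq\; C(n)\sum_{k\geq 0}(k+1)^n\, e^{-\frac14(k-5n)_+^2} \;<\; \infty,
\]
since the Gaussian factor dominates the polynomial volume growth.

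Next I would equip $\tilde M$ with the pulled-back data $\tilde g := \rho^* g$ and $\tilde f := f \circ \rho$. Since $\rho$ is a local isometry, $(\tilde M, \tilde g, \tilde f)$ is again a complete Ricci shrinker orbifold satisfying \eqref{eq:rso}, with the normalization \eqref{eq:rso1} preserved and $\inf_{|\tilde M|} \tilde R = \inf_{|M|} R > -\infty$. Hence the previous step applies to $\tilde M$ and yields $\int_{\tilde M} e^{-\tilde f}\,dV_{\tilde g} < \infty$.

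Finally, suppose toward a contradiction that $\Gamma$ is infinite. The group $\Gamma$ acts on $\tilde M$ by isometries preserving $\tilde f$, with quotient $\tilde M / \Gamma = M$. Choosing a fundamental domain $D$ for the $\Gamma$-action, its essentially disjoint translates $\{\gamma D\}_{\gamma \in \Gamma}$ cover $\tilde M$ and satisfy $\int_{\gamma D} e^{-\tilde f}\,dV_{\tilde g} = \int_M e^{-f}\,dV_g =: c > 0$. Consequently
\[
\int_{\tilde M} e^{-\tilde f}\,dV_{\tilde g} \;\geq\; \sum_{\gamma \in \Gamma} \int_{\gamma D} e^{-\tilde f}\,dV_{\tilde g} \;=\; |\Gamma|\, c \;=\; \infty,
\]
contradicting the finiteness just obtained; therefore $\Gamma = \pi_1^{\mathrm{orb}}(M)$ is finite. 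The main obstacle is the orbifold-covering bookkeeping: one must justify that the orbifold universal cover exists as a complete Ricci shrinker orbifold (using the existence result quoted after the definition of $\pi_1^{\mathrm{orb}}$, together with the lifting of completeness and of \eqref{eq:rso}--\eqref{eq:rso1} through the local isometry $\rho$), and that a fundamental domain can be chosen so that the weighted-volume comparison survives across the singular strata. Phrasing the last step as the inequality $\int_{\tilde M} e^{-\tilde f}\,dV_{\tilde g} \geq |\Gamma|\,c$, rather than an exact identity, avoids any delicate orbifold volume normalization.
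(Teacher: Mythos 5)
Your argument is correct, but it is a genuinely different route from the paper's. The paper also passes to the orbifold universal cover $\rho:(\tilde M,\tilde g,\tilde f)\to(M,g,f)$, but then finishes much more directly: applying Proposition \ref{prop:estf} to $\tilde f$ centered at a lift $\tilde p$ of a minimum point, the lower bound $\tilde f(x)\ge\frac14\lc d(\tilde p,x)-5n\rc_+^2$ together with $\tilde f=f\circ\rho$ shows that $\rho$ is proper, so the (discrete) fiber over a regular point is compact, hence finite, and the deck group $\pi_1^{\mathrm{orb}}(M)$, acting simply transitively on that fiber, is finite. This uses only the potential growth estimate. Your proof instead combines Proposition \ref{prop:estf} with the volume upper bound of Proposition \ref{prop:volume} to get finiteness of the weighted volume $\int_{\tilde M}e^{-\tilde f}\,dV$, and then invokes a fundamental-domain/sheet-counting argument; the extra ingredients are all available in the paper (both propositions apply to $\tilde M$ since $\inf R>-\infty$ and completeness pull back), and your decision to restrict the multiplicity count to the regular parts and settle for the inequality $\int_{\tilde M}e^{-\tilde f}\,dV\ge|\Gamma|\,c$ correctly sidesteps the orbifold measure normalization and the fact that deck transformations need not act freely on all of $\tilde M$. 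What the paper's approach buys is economy — no volume estimate and no fundamental domain are needed; what yours buys is a quantitative byproduct, namely the explicit bound $|\pi_1^{\mathrm{orb}}(M)|\le\big(\int_{\tilde M}e^{-\tilde f}\,dV\big)/\big(\int_Me^{-f}\,dV\big)$, in the spirit of the weighted-volume proofs of finiteness of $\pi_1$ for manifold shrinkers.
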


\begin{proof}
Let $\rho: (\tilde M, \tilde g, \tilde f ) \to (M, g,f)$ be the universal cover, which is also a Ricci shrinker orbifold by using the pullback metric $\tilde g=\rho^* g$ and potential function $\tilde f=\rho^* f$. Let $p$ be a minimum point of $f$ and fix $\tilde p \in \rho^{-1}(p)$. Then, $\tilde p$ is also a minimum point of $\tilde f$, so the estimates in Proposition \ref{prop:estf} also hold for $\tilde f$, that is, 
\begin{align*}
\frac{1}{4} \lc d(\tilde p,x)-5n \rc_+^2 \le \tilde f(x) \le \frac{1}{4} \lc d(\tilde p,x)+\sqrt{2n} \rc^2,
\end{align*}
for any $x \in |\tilde M|$. Therefore, it implies that the map $\rho$ is proper. Since $\rho$ is a covering map, the preimage of any regular point in $|M|$ has a finite number of points. Consequently, $\pi_1^{\mathrm{orb}}(M)$ is finite.
\end{proof}

As a corollary, we prove

\begin{cor}\label{cor:dr1}
Let $(M^n,g,f)$ be a Ricci shrinker orbifold such that $\inf_{|M|} R>-\infty$. Then, 
\begin{align*}
H^1_{\mathrm{dR}}(M)=0.
\end{align*}
\end{cor}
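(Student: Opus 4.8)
The plan is to reduce the statement to the finiteness of the orbifold fundamental group established in Proposition \ref{prop:fund} and then push everything through the de Rham theorem for orbifolds. First I would apply Theorem \ref{thm:der} to identify $H^1_{\mathrm{dR}}(M) \cong H^1(|M|, \R)$, thereby converting the orbifold cohomology question into one about the ordinary singular cohomology of the underlying topological space $|M|$ with real coefficients.

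Next I would unwind the right-hand side by elementary algebraic topology. Since $\R$ is an injective $\Z$-module, the universal coefficient theorem gives $H^1(|M|, \R) \cong \mathrm{Hom}(H_1(|M|; \Z), \R)$, and the Hurewicz theorem identifies $H_1(|M|; \Z)$ with the abelianization $\pi_1(|M|)^{\mathrm{ab}}$ of the ordinary fundamental group of $|M|$. Hence it suffices to show that $\pi_1(|M|)$ is finite, as any homomorphism from a finite group into the torsion-free group $(\R,+)$ is trivial, forcing $\mathrm{Hom}(H_1(|M|;\Z),\R)=0$.

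The key input is the natural epimorphism $\pi_1^{\mathrm{orb}}(M) \to \pi_1(|M|)$ recalled earlier in the section (cf. \cite[Proposition 2.5]{BMP03}). Combining this surjection with Proposition \ref{prop:fund}, which asserts that $\pi_1^{\mathrm{orb}}(M)$ is finite, shows that $\pi_1(|M|)$ is a quotient of a finite group and therefore finite. Consequently $H_1(|M|;\Z)$ is finite, $\mathrm{Hom}(H_1(|M|;\Z),\R)=0$, and tracing the identifications back yields $H^1_{\mathrm{dR}}(M)=0$.

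I do not expect a genuine obstacle here; the argument is a short chain of standard identifications, and all the analytic content (the quadratic growth of $f$, the properness of the universal cover, and hence the finiteness of $\pi_1^{\mathrm{orb}}(M)$) has already been absorbed into Proposition \ref{prop:fund}. The only point deserving a brief remark is the legitimacy of invoking the universal coefficient and Hurewicz theorems for $|M|$: this is justified because $|M|$ is paracompact, Hausdorff, and locally contractible, being locally modeled on cones $\R^n/\Gamma$, so it has the homotopy type of a CW complex and these classical results apply.
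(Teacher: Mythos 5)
Your argument is correct and is essentially the paper's own proof: the authors likewise combine the epimorphism $\pi_1^{\mathrm{orb}}(M) \to \pi_1(|M|)$ with Proposition \ref{prop:fund} to get finiteness of $\pi_1(|M|)$ and then invoke Theorem \ref{thm:der}. You have merely spelled out the standard Hurewicz/universal-coefficients step that the paper leaves implicit in the phrase ``the conclusion follows from Theorem \ref{thm:der}.''
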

\begin{proof}
Since there exists a natural epimorphism $\pi_1^{\mathrm{orb}}(M) \to \pi_1(|M|)$ (cf. \cite[Proposition 2.5]{BMP03}), we conclude by Proposition \ref{prop:fund} that $\pi_1(|M|)$ is also finite. Therefore, the conclusion follows from Theorem \ref{thm:der}.
\end{proof}

\section{Proof of the main theorems}

\subsection{K\"ahler Ricci shrinkers on toric orbifolds}

In this section, we consider Ricci shrinker orbifolds with extra structures and symmetries. First, we have the following definition.

\begin{defn}[Toric K\"ahler Ricci shrinker orbifold]
A \emph{toric K\"ahler Ricci shrinker orbifold} $(M^{2n}, g, J, f, \T^n)$ is a Ricci shrinker orbifold $(M,g,f)$ with a compatible complex structure $J$ so that $\omega:=g(J\cdot, \cdot)$ is closed, together with an effective, holomorphic and isometric action of $\T^n$.
\end{defn}

In particular, it follows from the equation \eqref{eq:rso} that $\na f$ is a real holomorphic vector field and $J \na f$ is a Killing field. For a toric K\"ahler Ricci shrinker orbifold with bounded scalar curvature, we prove

\begin{prop}
\label{prop:kros}
Let $(M^{2n}, g, J, f, \T^n)$ be a toric K\"ahler Ricci shrinker orbifold with bounded scalar curvature. Then the following assertions hold:
\begin{enumerate}[label=(\roman*)]
\item The $\T^n$-action preserves the potential function $f$.
\item The $\T^n$-action on the symplectic form $\omega:=g(J\cdot, \cdot)$ is Hamiltonian.
\item $J \na f \in \lt$, the Lie algebra of $\T^n$.
\item The action of $\T^n$ extends to a holomorphic $(\C^*)^n$-action.
\end{enumerate}
\end{prop}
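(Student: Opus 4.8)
The four assertions are proved in order; (i) is the conceptual heart and (iv) the analytic one, and boundedness of $R$ enters decisively in both.

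For (i), the plan is to exploit that $\T^n$ acts by biholomorphic isometries, so it preserves $g$, $J$, $\omega$, $\Rc$ and the scalar curvature $R$. Pulling back the shrinker equation \eqref{eq:rso} by $\sigma \in \T^n$ then gives $\na^2(f\circ\sigma)=\na^2 f$, and differentiating along a one-parameter subgroup $\exp(t\xi)$ yields $\na^2(\xi_M f)=0$ for every $\xi\in\lt$. Thus $u:=\xi_M f$ has parallel gradient; moreover $u$ vanishes at the minimum point $p$ of $f$ (which exists by Proposition~\ref{prop:estf}) since $\na f(p)=0$. It remains to rule out $\na u\neq 0$. If $Y:=\na u$ were a nonzero parallel field, then $\Rc(Y,\cdot)=0$ and, by the shrinker equation, $\na_Y\na f=\tfrac12 Y$; hence $(M,g)$ splits off a flat Gaussian factor along $Y$, on which the flow of $\xi_M$ would act by a nontrivial translation. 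This contradicts relative compactness of the orbits of $\exp(t\xi)\subset\T^n$. Therefore $\na u\equiv 0$, so $u\equiv u(p)=0$, and since $\xi$ is arbitrary the action preserves $f$. One can package the translation obstruction analytically: a direct computation using \eqref{eq:rso} and the Killing identities gives $\Delta_f u=-\tfrac12 u$, which together with $\na^2 u=0$ and the weighted integration by $e^{-f}$ forces $u\equiv 0$.

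Assertion (ii) is formal: preservation of $g$ and $J$ gives $\mathcal L_{\xi_M}\omega=0$, and Cartan's formula with $d\omega=0$ yields $d(\iota_{\xi_M}\omega)=0$. By Corollary~\ref{cor:dr1} the orbifold cohomology $H^1_{\mathrm{dR}}(M)=0$ (Theorem~\ref{thm:der}), so $\iota_{\xi_M}\omega=-d\Phi^\xi$ is exact; choosing $\Phi^\xi$ linearly in $\xi$ and averaging over the compact abelian $\T^n$ produces an equivariant moment map $\Phi:M\to\lt^*$, so the action is Hamiltonian. For (iii), part (i) gives $\xi_M f=0$, whence $\mathcal L_{\xi_M}\na f=\na(\xi_M f)=0$, so $\na f$ and $J\na f$ are $\T^n$-invariant and $X:=J\na f$ commutes with $\T^n$. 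Since $Xf=g(J\na f,\na f)=\omega(\na f,\na f)=0$, the Killing field $X$ is tangent to the level sets of $f$, which are compact by properness of $f$ (Proposition~\ref{prop:estf}); hence $X$ has bounded orbits and the closure of its flow is a compact torus $\T'$. Then $\hat\T:=\overline{\T^n\cdot\T'}$ is a compact connected abelian group acting effectively, holomorphically and (by the argument of (ii)) Hamiltonianly; as Hamiltonian torus orbits are isotropic, $\dim\hat\T\le n$, forcing $\hat\T=\T^n$ and $X=J\na f\in\lt$.

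For (iv), write $J\na f=\zeta_M$ with $\zeta\in\lt$ from (iii); then $J\xi_M=-\na\langle\Phi,\xi\rangle$ for all $\xi$, and in particular $\langle\Phi,\zeta\rangle=f$ up to a constant, which is proper and bounded below by Proposition~\ref{prop:estf}. Fixed points of $\T^n$ satisfy $\na f=0$, and $\mathrm{Crit}(f)\subset\{f\le\sup R\}$ is compact because $|\na f|^2=f-R$ with $R$ bounded; since the isotropy representation at a fixed point is a maximal torus in $\mathrm{U}(n)$, such points are isolated, hence finite. Thus $(M,\omega,\T^n,\Phi)$ satisfies $(\dagger)$ of Definition~\ref{def:orb1}. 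Extending the action is equivalent to proving each $J\xi_M=-\na\langle\Phi,\xi\rangle$ is complete, and the plan is to run this gradient flow: on the principal orbit it is the straight-line flow in momentum coordinates, which by properness of $\Phi$ cannot reach infinity in finite time, while at the non-principal and singular orbits the local holomorphic normal form (Theorem~\ref{thm:hslice}, obtained from the slice Corollary~\ref{cor:slice1} by linearizing the isotropy representation) shows the flow crosses the lower strata smoothly; patching by $\T^n$-equivariance yields global completeness and hence the desired $(\C^*)^n$-action. I expect the two genuine obstacles to be the rigidity step in (i) (excluding the flat factor) and the completeness step in (iv); both are precisely where the hypothesis that $R$ is bounded is indispensable.
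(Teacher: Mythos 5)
Your reductions for (i)--(iii) start the same way as the paper's (pull back the shrinker equation, differentiate to get $\na^2(\xi_M f)=0$; use $H^1_{\mathrm{dR}}(M)=0$ for (ii)), and your (iii) is a genuinely different and rather clean alternative: the paper instead writes $J\na f=\sum b_i Y_i$ on the principal stratum and shows each $b_i$ is constant by computing $Y_k(b_i)=JY_k(b_i)=0$, whereas you enlarge $\T^n$ by the closure of the flow of $J\na f$ and invoke maximality of Hamiltonian tori. However, the rigidity step in (i) has a real gap. Once $\na u$ ($u:=\xi_M f$) is a nonzero parallel field and $M$ splits as $N\times\R^k$ with $f=f_N+|x|^2/4$, the flow of $\xi_M$ on the flat factor need \emph{not} be a translation: a one-parameter group of isometries of $\R^k$ with relatively compact closure can be a rotation about a point $x_*$ different from the center $x_0$ of the Gaussian, and then $\xi_M f=\tfrac12\langle x,S(x_0-x_*)\rangle+\mathrm{const}$ is a nonzero affine function with parallel gradient, so compactness of the orbits yields no contradiction. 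Your analytic fallback also fails: $\na^2u=0$ and $\Delta_f u=-\tfrac12 u$ give, after integrating against $e^{-f}$, only the identity $\int|\na u|^2e^{-f}=\tfrac12\int u^2e^{-f}$, which is exactly satisfied by the linear eigenfunctions of a Gaussian factor ($u=x_1$ on $\R^k$), so it does not force $u\equiv0$. Any correct argument must pin down the center of the rotation on the flat factor (the paper handles the split case by asserting the lattice element is tangent to $N$, which is where this issue is absorbed); your two proposed mechanisms both miss it.

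Part (iv) is also not a proof as written, and is partly circular. Theorem \ref{thm:hslice} in this paper is stated for K\"ahler \emph{toric} orbifolds, i.e.\ it already presupposes the holomorphic $(\C^*)^n$-action whose existence is the content of (iv), so it cannot be invoked here; likewise the momentum--angle description of the principal orbit as $\mathring{P}\times\T^n$ with $y=\na u(x)$ ranging over all of $\R^n$ is precisely what fails when the action does not extend (cf.\ the paper's reference to \cite[Example 1]{Ci22}). Concretely, the flow of $J\xi_M$ is a straight line in the $y$-coordinates, but the image $\na u(\mathring{P})$ could a priori be a proper convex subset of $\R^n$ whose boundary is reached in finite time, and properness of $\Phi$ (the $x$-variable) does not exclude this. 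The paper's route is different and non-circular: bounded scalar curvature plus $|\na f|^2=f-R$ and Proposition \ref{prop:estf} give compactness of $\mathrm{Crit}(f)$, and then completeness of $J\xi_M$ follows from $[J\xi_M,\na f]=0$ via \cite[Lemma 2.34]{CDS24}. You correctly identified where bounded $R$ enters, but the completeness mechanism you propose does not close.
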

\begin{proof}
(i): We claim that for any $Y \in \mathfrak l$, the lattice of $\lt$, we have $Yf\equiv 0$. Indeed, we denote the family of isometries $e^{2\pi t Y}$ on $M$ by $\phi_t$, where $t \in [0,1]$. Since $\phi_t$ is isometric, it follows from \eqref{eq:rso} that
\begin{align*}
\Rc+\na^2( \phi_t^* f)=\frac{1}{2} g.
\end{align*}
Taking the derivative with respect to $t$ at $t=0$, we conclude that
\begin{align} \label{eq:k01a}
\na^2 (Yf)=0.
\end{align}

If $(M, g)$ does not split off a line, it follows from \eqref{eq:k01a} that $Yf$ must be a constant. Since $f$ has a minimum point by Proposition \ref{prop:estf}, it is immediate that $Yf \equiv 0$. On the other hand, if $(M, g,f)$ indeed splits off a line, we assume that $(M, g,f)$ is isometric to $(N \times \R^k, g_N \times g_E,f_N+|x|^2/4)$, where $(N,g_N,f_N)$ is a Ricci shrinker orbifold that does not split off a line, and $x$ is the coordinate of $\R^k$. Since $Y$ belongs to the lattice, we conclude that $Y$ is tangent to $N$, and hence $Yf=Yf_N$. Therefore, the same argument yields $Yf\equiv 0$. Since $\lt$ is generated by $\mathfrak l$, we conclude that $Yf\equiv 0$ for any $Y \in \lt$.

(ii): Since the $\T^n$-action is holomorphic and isometric, it is also symplectic with respect to $\omega$. Let $\{Y_1,\cdots,Y_n\}$ be a basis of $\lt$. Since 
\begin{align*}
\mathcal L_{Y_i} \omega=d(\iota_{Y_i} \omega)=0,
\end{align*}
it follows from Corollary \ref{cor:dr1} that there exists a smooth function $h_i$ such that
\begin{align*}
-\iota_{Y_i} \omega=d h_i.
\end{align*}
For a general $Y=\sum c_i Y_i \in \lt$, we define the map $\Phi: M \to \lt^*$ by
\begin{align*}
\la \Phi, Y\ra=\sum c_i h_i.
\end{align*}
To show that the $\T^n$-action is Hamiltonian, we only need to prove that $\Phi$ is invariant under $\T^n$, which is equivalent to
\begin{align} \label{eq:k02}
\omega(Y_i, Y_j)\equiv 0.
\end{align}
By direct computation, we have
\begin{align*}
d(\omega(Y_i, Y_j))= d\iota_{Y_j} \iota_{Y_i} \omega=\mathcal L_{Y_j} \iota_{Y_i} \omega-\iota_{Y_i} \mathcal L_{Y_j} \omega=\iota_{[Y_j,Y_i]} \omega=0.
\end{align*}
Consequently, it implies that $\omega(Y_i, Y_j) \equiv c$ for a constant $c$. By Proposition \ref{prop:estf}, the level set $\{f=L\}$ is compact and nonempty for all large constant $L$. We assume $h_j$ takes its minimum on $\{f=L\}$ at $q$. By (i), $\T^n$ preserves $\{f=L\}$, so $\omega(Y_i, Y_j)=Y_i h_j=0$ at $q$. Therefore, \eqref{eq:k02} holds.

(iii): As before, we set $\{Y_1,\cdots,Y_n\}$ to be a basis of $\lt$. Since the $\T^n$-action is effective, there exists an open dense set $U \subset M_{\mathrm{reg}}$ on which $\T^n$ has no fixed point. It is clear from (ii) that $T_x M=\lt \oplus J \lt$ orthogonally at each point $x \in U$. Moreover, since each $Y_i$ preserves $f$, we can set
\begin{align*}
J\na f=\sum_i b_i Y_i,
\end{align*}
on $U$, where $b_i$ is a smooth function. By direct computation, for any $k$,
\begin{align*}
\sum_i Y_k(b_i) Y_i=[Y_k ,\sum_i b_i Y_i]=[Y_k, J\na f]=J[Y_k, \na f]=0,
\end{align*}
where we have used the fact that $Y_k$ is a holomorphic Killing field which preserves $f$. Similarly, we have
\begin{align*}
\sum_i JY_k(b_i) Y_i=[JY_k ,\sum_i b_i Y_i]=[JY_k, J\na f]=-[Y_k, \na f]=0.
\end{align*}
Therefore, we conclude that $b_i$ is a constant on $U$. By continuity, it is immediate that $J \na f \in \lt$.

(iv): We only need to prove that $J Y$ is complete for any $Y\in \lt$. By our assumption that the scalar curvature $R$ is bounded, it follows from \eqref{eq:rso1} and Proposition \ref{prop:estf} that the set of critical points of $f$ is compact. Since $[JY,\na f]=0$, it follows from \cite[Lemma 2.34]{CDS24} that $JY$ is complete.
\end{proof}

As a corollary, we have

\begin{cor}\label{cor:cha} 
Let $(M^{2n}, g, J, f, \T^n)$ be a toric K\"ahler Ricci shrinker orbifold with bounded scalar curvature. Then, it is a K\"ahler toric orbifold with $(\dagger)$.
\end{cor}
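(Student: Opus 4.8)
The goal is to show that a toric K\"ahler Ricci shrinker orbifold $(M^{2n}, g, J, f, \T^n)$ with bounded scalar curvature is a K\"ahler toric orbifold satisfying $(\dagger)$. The plan is to assemble the conclusions of Proposition \ref{prop:kros} and verify each condition in Definitions \ref{def:kto} and \ref{def:orb1}.

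First I would observe that the $\T^n$-action is holomorphic, isometric, and hence symplectic with respect to $\omega$, and that by Proposition \ref{prop:kros} (ii) it is Hamiltonian with some moment map $\Phi$; by Proposition \ref{prop:kros} (iv) the action extends to a holomorphic $(\C^*)^n$-action. Thus $(M^{2n}, \omega, J, (\C^*)^n, \Phi)$ is a K\"ahler toric orbifold in the sense of Definition \ref{def:kto}. It then remains only to check the two conditions $(\dagger)$ (a) and (b) of Definition \ref{def:orb1}.

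For condition (b), the natural candidate is $b \in \lt$ with $J\na f = \sum_i b_i Y_i$ from Proposition \ref{prop:kros} (iii); indeed $J\na f \in \lt$, so writing $J\na f$ as a lattice/Lie-algebra element, its Hamiltonian function is, up to a constant, the potential $f$ itself, since $-\iota_{J\na f}\omega = -\omega(J\na f, \cdot) = g(\na f, \cdot) = df$. Hence $\la \Phi, b\ra = f$ up to an additive constant, and by Proposition \ref{prop:estf} the function $f$ is proper and bounded below. This gives exactly (b), and in particular $\Phi$ is proper. For condition (a), I would use that the boundedness of $R$ together with \eqref{eq:rso1} forces the critical set of $f$ to be compact (as already noted in the proof of Proposition \ref{prop:kros} (iv)); the $\T^n$-fixed points are contained in the critical set of the $\T^n$-invariant proper function $f$, so there are only finitely many of them by Theorem \ref{thm:image} (iv) (vertices of $\Phi(M)$), or more directly because a compact set meets the discrete fixed-point set in finitely many points.

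The main obstacle, if any, is making precise the identification of the Hamiltonian generated by $J\na f$ with $f$ and confirming that finitely many fixed points follows cleanly: one must ensure the fixed-point set is discrete (each fixed point is isolated because the moment map sends it to a vertex of the polyhedron by Theorem \ref{thm:image} (iv), and vertices are isolated) and contained in a compact set (the critical set of $f$), whence finite. Once these two verifications are in place, all hypotheses of Definition \ref{def:orb1} hold, and the corollary follows immediately.
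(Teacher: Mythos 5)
Your overall route is exactly the paper's: Proposition \ref{prop:kros} (ii) and (iv) give the K\"ahler toric orbifold structure, and condition (b) of $(\dagger)$ is verified precisely as in the paper by computing $d\la \Phi, J\na f\ra = -\iota_{J\na f}\omega = df$ and invoking Proposition \ref{prop:estf}. The one place where your argument as written does not close is the justification that the $\T^n$-fixed points are isolated: you appeal to Theorem \ref{thm:image} (iv), but that theorem takes $(\dagger)$ as a hypothesis, so using it to establish $(\dagger)$ (a) is circular. Your fallback (``a compact set meets the discrete fixed-point set in finitely many points'') presupposes exactly the discreteness you are trying to prove.

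The non-circular justification --- which is what the paper implicitly relies on when it asserts ``all fixed points are isolated'' --- comes from the local normal form of Corollary \ref{cor:slice1} (or Theorem \ref{thm:hslice}): at a fixed point $x$ the isotropy group is all of $\T^n$, a neighborhood of $x$ is equivariantly modeled on $V/\Gamma_x$ with $\dim V = 2n$ and a faithful linear $\T^n$-action, and the only fixed point of such a linear action is the origin. Combined with the compactness of the critical set of $f$ (bounded $R$ together with \eqref{eq:rso1} and Proposition \ref{prop:estf}) and the fact that every fixed point is a critical point of $f$ (since $J\na f \in \lt$ by Proposition \ref{prop:kros} (iii), so $\na f$ vanishes wherever all fundamental vector fields do), the fixed-point set is a closed, discrete subset of a compact set, hence finite. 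With this substitution your proof coincides with the paper's.
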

\begin{proof}
By Proposition \ref{prop:kros} (ii)(iv), we have shown that $(M,\omega,\T^n, \Phi)$ has a Hamiltonian $\T^n$-action which extends to a holomorphic $(\C^*)^n$-action, where $\omega=g(J\cdot, \cdot)$, and $\Phi$ is a moment map.

To show $(\dagger)$, we first note that the set of critical points of $f$ is compact since $R$ is bounded. Then, it follows from Proposition \ref{prop:kros} (iii) that there are only finitely many fixed points of $\T^n$, since all fixed points are isolated. In addition, it is clear that
\begin{align*}
d\la \Phi, J\na f \ra=-\iota_{J\na f} \omega=g(\cdot, \na f)=df,
\end{align*}
and hence by Proposition \ref{prop:estf}, $\la \Phi, J\na f \ra$ is proper and bounded below.
\end{proof}

As a K\"ahler toric orbifold, the labeled polyhedron is determined for a toric K\"ahler Ricci shrinker orbifold with bounded scalar curvature.

\begin{lem}\label{lem:class2} 
Let $(M^{2n}, g, J, f, \T^n)$ be a toric K\"ahler Ricci shrinker orbifold with bounded scalar curvature. Then, the associated labeled polyhedron $P$ is independent of the metric $g$.
\end{lem}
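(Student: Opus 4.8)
\emph{Plan.} The strategy is to reduce the statement to the metric independence of the cohomology class $[\omega]\in H^2_{\mathrm{dR}}(M)$, and then to pin down $[\omega]$ as a fixed multiple of the first Chern class $c_1(M)$ by means of the shrinker equation. First I would invoke Corollary \ref{cor:cha} to regard $(M,g,J,f,\T^n)$ as a K\"ahler toric orbifold with $(\dagger)$. As $g$ varies among toric K\"ahler Ricci shrinker metrics, the complex structure $J$ and the $\T^n$-action are held fixed; moreover, by Proposition \ref{prop:kros} (iv) the extended holomorphic $(\C^*)^n$-action is generated by the fields $JY$ for $Y\in\lt$, so since $J$ is fixed this $(\C^*)^n$-action, and hence the toric variety $(M,J,(\C^*)^n)$ of Theorem \ref{thm:to} together with its fan, primitive normals $n_i$, and labels $m_i$, is the same for every such metric. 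By Corollary \ref{cor:class} the labeled polyhedron $P=\Phi(M)$ is then determined, up to translation, by the single datum $[\omega]$. Thus it suffices to prove that $[\omega]$ is independent of $g$.

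Next I would extract $[\omega]$ from the shrinker equation \eqref{eq:rso}. Since $\na f$ is real holomorphic, $\na^2 f$ is $J$-invariant, so both $\rho(\cdot,\cdot):=\Rc(J\cdot,\cdot)$ and $\tau(\cdot,\cdot):=\na^2 f(J\cdot,\cdot)$ are genuine $2$-forms; here $\rho$ is the Ricci form, which by orbifold Chern--Weil theory represents $2\pi\,c_1(M)$ in $H^2_{\mathrm{dR}}(M)$. Composing \eqref{eq:rso} with $J$ in the first argument yields the identity of $2$-forms
\begin{align*}
\rho+\tau=\tfrac12\,\omega.
\end{align*}
A direct computation using $\na J=0$ identifies $\tau$ with a constant multiple of $\sqrt{-1}\,\partial\bar\partial f$; concretely $\tau=\tfrac12\,d\big(g(J\na f,\cdot)\big)$, where $g(J\na f,\cdot)$ is a globally defined $1$-form because $J\na f\in\lt$ is a Killing field. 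Hence $\tau$ is exact in $H^2_{\mathrm{dR}}(M)$, and passing to cohomology gives
\begin{align*}
[\omega]=2[\rho]=4\pi\,c_1(M),
\end{align*}
a fixed multiple of $c_1(M)$.

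Finally, $c_1(M)\in H^2_{\mathrm{dR}}(M)$ is the orbifold first Chern class of $(M,J)$, i.e. the class of the anticanonical orbibundle, and depends only on the complex structure $J$, not on $g$. Consequently $[\omega]$ is independent of $g$, and by the reduction of the first paragraph the labeled polyhedron $P$ is independent of $g$, as claimed. I expect the only nonroutine point to be the cohomological identity $[\omega]=2[\rho]$: it requires transporting the standard K\"ahler facts, namely the representation of $c_1(M)$ by the Ricci form and the $\partial\bar\partial$-exactness of $\tau$, to the orbifold setting. Both transfer verbatim through good local charts and the orbifold de Rham theorem (Theorem \ref{thm:der}), after which the argument is immediate.
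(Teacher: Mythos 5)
Your proposal is correct and takes essentially the same route as the paper: reduce via Corollary \ref{cor:class} to the metric-independence of $[\omega]$, then use the shrinker equation to identify $[\omega]$ with $4\pi$ times the anticanonical class (equivalently $4\pi\,c_1(M)$), which depends only on $J$. The paper states this in a single line, $[\omega]/4\pi=-\mathcal K$ in $H^2_{\mathrm{dR}}(M)$; your computation showing that the Hessian term $\na^2 f(J\cdot,\cdot)=\tfrac12\,d\bigl(g(J\na f,\cdot)\bigr)$ is exact is precisely the justification left implicit there.
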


\begin{proof}
By Corollary \ref{cor:class}, we only need to prove that the class $[\omega]$ is determined. This is true since by the Ricci shrinker equation \eqref{eq:rso}
\begin{align*}
\frac{[\omega]}{4\pi}=-\mathcal K,
\end{align*}
in $H^2_{\mathrm{dR}}(M)$, where $\mathcal K$ is the canonical class.
\end{proof}

For a toric K\"ahler Ricci shrinker orbifold $(M^{2n}, g, J, f, \T^n)$ with bounded scalar curvature, it follows from \cite[Theorem 8.2.3]{CLS11} that
\begin{align*}
-\mathcal K=\sum_i [D_i].
\end{align*}
Therefore, by the proof of Corollary \ref{cor:class}, we conclude that the image of a moment map can be expressed as
\begin{align*}
P = \bigcap_{i = 1}^{N} \{ x \in \lt^* \; | \langle x, m_i n_i \rangle+2 \geq 0\}.
\end{align*} 

In general, for any positive integer $m$, we consider the rescaled metric $g'=\frac{m}{2}g$ and $\omega'=g'(J\cdot, \cdot)$. It is clear that
\begin{align*}
\Rc(g')+\na_{g'}^2 f=\frac{1}{m} g'.
\end{align*} 
Therefore, the image of the moment map with respect to $\omega'$ can be expressed as 
\begin{align}\label{eq:rep4}
P'=\frac{2}{m}P= \bigcap_{i = 1}^{N} \{ x \in \lt^* \; | \langle x, m_i n_i \rangle+m \geq 0\}.
\end{align} 
Since $P'$ is a rational, simple polyhedron, we conclude from \eqref{eq:rep4} that all vertices belong to $\mathfrak l^*$, if $m$ is appropriately chosen. In other words, $P'$ is a lattice polyhedron. Consequently, the following result is immediate from Theorem \ref{thm:ka} and \cite[Proposition 7.2.9]{CLS11}.

\begin{cor}
Let $(M^{2n}, g, J, f, \T^n)$ be a toric K\"ahler Ricci shrinker orbifold with bounded scalar curvature. Then, $(M, J)$ is a quasiprojective toric variety.
\end{cor}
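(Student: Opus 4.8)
The plan is to read off quasiprojectivity from the combinatorics of the moment polyhedron, via the normal-fan description of $(M,J)$. By Corollary \ref{cor:cha}, the data $(M,\omega,J,(\C^*)^n,\Phi)$ forms a K\"ahler toric orbifold satisfying $(\dagger)$, so Theorem \ref{thm:ka} realizes $(M,J)$ as the toric variety whose fan is the normal fan of the proper, rational, simple polyhedron $P=\Phi(M)$. Being simple, $P$ is $n$-dimensional, so this normal fan is full-dimensional and pins down $(M,J)$ up to biholomorphism.

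Next I would reduce to the lattice case using the rescaling already recorded in \eqref{eq:rep4}: replacing $g$ by $g'=\frac{m}{2}g$ multiplies $\omega$, and hence $P$, by a positive constant, producing a homothetic polyhedron $P'$ whose vertices lie in $\mathfrak l^*$ for a suitable integer $m$; that is, $P'$ is a full-dimensional lattice polyhedron. Since $P'$ is a positive dilate of $P$, the two have identical normal fans, so the toric variety attached to $P'$ is again $(M,J)$ --- the rescaling alters only the K\"ahler class, never the complex structure. Applying \cite[Proposition 7.2.9]{CLS11}, which identifies the toric variety of the normal fan of a full-dimensional lattice polyhedron as quasiprojective, then gives the conclusion.

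There is no serious analytic obstacle here; the only point demanding care is that passing from $g$ to $g'$ leaves the biholomorphism type unchanged, which is immediate since $g$ and $g'$ induce the same $J$ and normal fans are invariant under positive dilation and translation of the polyhedron. All of the substantive work --- the property $(\dagger)$, the orbifold/labeled-polyhedron correspondence, and the normal-fan description of $J$ --- has been established in the preceding sections, so the corollary follows with no further input.
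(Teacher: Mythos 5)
Your proposal is correct and takes essentially the same route as the paper: use Corollary \ref{cor:cha} and Theorem \ref{thm:ka} to identify $(M,J)$ with the toric variety of the normal fan of $P$, rescale the metric as in \eqref{eq:rep4} so that the anticanonically normalized polyhedron becomes a homothetic full-dimensional lattice polyhedron with the same normal fan, and conclude by \cite[Proposition 7.2.9]{CLS11}. No gaps.
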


\subsection{Uniqueness of solutions}

\subsubsection*{Proof of Theorem \ref{thm:A}}

In this section, we prove Theorem \ref{thm:A}. Let $(M^{2n}, g, J, f, \T^n)$ be a toric K\"ahler Ricci shrinker orbifold with bounded scalar curvature. Then, it follows from Corollary \ref{cor:cha} that $(M, \omega, J, (\C^*)^n, \Phi)$ is a K\"ahler toric orbifold with $(\dagger)$, where $\omega=g(J\cdot, \cdot)$, and $\Phi$ is a moment map. Moreover, we set $P=\Phi(M)$ to be the labeled polyhedron, which is defined independently of $g$ by Lemma \ref{lem:class2} and can be normalized and represented as 
\begin{align}\label{eq:rep3a}
P = \bigcap_{i = 1}^{N} \{ x \in \lt^* \; | \langle x, m_i n_i \rangle+2 \geq 0\}.
\end{align} 

For the standard K\"ahler toric orbifold $(M_P, \omega_P, J_P, (\C^*)^n,\Phi_P)$, we fix a momentum-angle coordinates $\{x_1,\cdots,x_n; t_1,\cdots,t_n\}$ on $\mathring{P} \times \T^n \cong \mathring{M_P}$. It follows from Theorem \ref{T204} that there exists a $\T^n$-invariant symplectomorphism $\varphi:M_P \to M$. We set $J_1:=\varphi^* J$, $f_1:=\varphi^* f$ and $g_1:=\omega_P(\cdot, J_1 \cdot)$. Moreover, we denote $J_1\na^{g_1} f_1$ by $X$. Note that $X$ is a real holomorphic vector field with $X \in \lt$ by Proposition \ref{prop:kros} (iii). 

Then, it is clear that $(M_P, g_1, J_1, f_1, \T^n)$ is a toric K\"ahler Ricci shrinker orbifold with bounded scalar curvature. Let $u$ be the symplectic potential on $P$, with respect to $g_1$, together with its Legendre transform by $\phi$, i.e., $\phi(y)+u(x)=\la y,x \ra$, where $y_i=\partial u/\partial x_i$. Notice that since the $\T^n$-action also extends to $(\C^*)^n$-action on $M_P$, with respect to $J_1$, we have $\na u: P \to \R^n$ is surjective, that is, $y=(y_1,\cdots, y_n)$ are coordinates of $\R^n$.

Now, one can rewrite the Ricci shrinker equation \eqref{eq:rso} in terms of the symplectic potential.

\begin{prop} \label{prop:symp}
With the above assumptions, the symplectic potential $u$, after adding a constant if necessary, satisfies:
\begin{align}\label{eq:m01}
u_i x_i-u(x)-\log \det(u_{ij})=\la b_X,x \ra,
\end{align} 
on $\mathring{P}$, where $b_X \in \lt$ is a constant vector.
\end{prop}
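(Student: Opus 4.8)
The plan is to pass from the real Ricci shrinker equation \eqref{eq:rso} to its K\"ahler form, restrict to the open principal orbit, and then convert it into an equation for the symplectic potential via the Abreu--Guillemin dictionary and a Legendre transform. First I would translate \eqref{eq:rso} into an identity of $(1,1)$-forms. By Proposition \ref{prop:kros} the field $\na f$ is real holomorphic, so $\na^2 f$ is $J$-invariant and its associated $(1,1)$-form is exactly $\st\ddb f$; likewise $\Rc$ corresponds to the Ricci form $\rho=-\st\ddb\log\det(g_{i\bar j})$ and $g$ to $\omega$. Hence \eqref{eq:rso} is equivalent, on $M$, to
\[
\rho+\st\ddb f=\tfrac12\,\omega .
\]

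Next I restrict to $\mathring M=(\C^*)^n(p_0)$ and use the momentum--angle coordinates with Lemma \ref{lem:kp}, so that $\tfrac12\omega=\st\ddb\phi$. In the holomorphic coordinates $w_i=y_i+\st\,t_i$ the Legendre duality $\mathrm{Hess}_y\phi=(\mathrm{Hess}_x u)^{-1}$ gives $\det(g_{i\bar j})=\mathrm{const}/\det(\mathrm{Hess}_x u)$, whence $\rho=\st\ddb\log\det(u_{ij})$ on $\mathring M$. Since $J\na f=X\in\lt$ (Proposition \ref{prop:kros}(iii)) and $df=d\langle\Phi,X\rangle$ by the computation in Corollary \ref{cor:cha}, the potential is linear in the moment coordinates: $f=\langle b_X,x\rangle+\mathrm{const}$ with $b_X\in\lt$. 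All terms are $\T^n$-invariant, and $\st\ddb$ of an invariant function vanishes iff that function is affine in $y$; applying this to $\log\det(u_{ij})+f-\phi$ yields
\[
\log\det(u_{ij})+f-\phi=\langle c,y\rangle+\mathrm{const}
\]
for some constant $c\in\lt^*$. Using $\phi=u_i x_i-u$ and $y=\na u$, this reads $u_i x_i-u-\log\det(u_{ij})=\langle b_X,x\rangle-\langle c,\na u\rangle+\mathrm{const}$.

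The decisive step is to show $c=0$, and this is exactly where the anti-canonical normalization \eqref{eq:rep3a} (so that $a_i=2$) is used. By Theorem \ref{thm:bound}(ii), $-\log\det(u_{ij})=\sum_i\log(l_i(x)+a_i)+(\text{smooth on }\overline P)$; by Theorem \ref{thm:bound}(i) and Proposition \ref{prop:class}(i), $\phi=\phi_P+(\text{smooth})$ with $\phi_P=\tfrac12\sum_i\big(l_i(x)-a_i\log(l_i(x)+a_i)\big)$, whose singular part along $F_i$ is $-\tfrac{a_i}{2}\log(l_i(x)+a_i)=-\log(l_i(x)+a_i)$ precisely because $a_i=2$. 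Thus the logarithmic singularities cancel and the left-hand side $u_i x_i-u-\log\det(u_{ij})$ extends to a bounded function up to $\partial P$. On the other hand, near $\mathring F_i$ one has $\na u\sim\tfrac12 m_i n_i\log(l_i(x)+a_i)$, so $\langle c,\na u\rangle$ behaves like $\tfrac{m_i}{2}\langle c,n_i\rangle\log(l_i(x)+a_i)$, which is unbounded unless $\langle c,n_i\rangle=0$. Since $P$ is simple and full-dimensional, the normals $\{n_i\}$ span $\lt$, so boundedness forces $c=0$.

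With $c=0$ the identity becomes $u_i x_i-u-\log\det(u_{ij})=\langle b_X,x\rangle+\mathrm{const}$, and I absorb the remaining additive constant by adding a constant to $u$ (legitimate, since only $\mathrm{Hess}\,u$ is constrained), obtaining \eqref{eq:m01}. I expect the vanishing $c=0$ to be the main obstacle: it is not a formal consequence of the Monge--Amp\`ere structure but a matching of boundary asymptotics through Theorem \ref{thm:bound}, and it genuinely requires $a_i=2$ (for a non-anti-canonical polytope the cancellation fails and a surviving linear-in-$y$ term persists). A secondary point needing care is the passage from $\na^2 f$ to $\st\ddb f$, which relies essentially on $\na f$ being holomorphic.
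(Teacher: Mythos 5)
Your proposal is correct and follows essentially the same route as the paper: rewrite the shrinker equation as a $\ddb$-identity on the principal orbit, deduce that $\log\det(u_{ij})+f-\phi$ is affine in $y$, Legendre-transform, and then kill the residual linear-in-$y$ term by matching boundary asymptotics via Theorem \ref{thm:bound} and the explicit singular part of $u_P$, using that the normals $\{n_i\}$ span $\lt$. The paper phrases the last step as smoothness of $a_i(u_P)_i$ on $P$ forcing $\la a,n_k\ra=0$, which is the same computation as your unboundedness argument for $\la c,\na u\ra$.
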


\begin{proof}
From Lemma \ref{lem:kp}, it follows that
\begin{align}\label{eq:m02}
\omega_P = 2\sqrt{-1}\partial \bar \partial \phi,
\end{align} 
where the underlying complex structure is $J_1$. Thus, the Ricci form of $g_1$, denoted by $\Rc_1$, can be expressed as
\begin{align}\label{eq:m03}
\Rc_1=-\sqrt{-1} \partial \bar \partial\log\det(\phi_{ij}).
\end{align} 
Moreover, we have 
\begin{align}\label{eq:m04}
-\mathcal L_{J_1X} \omega_P=-d \iota_{J_1X} \omega_P=-d \lc g_1(X, \cdot) \rc=dd^c f_1,
\end{align} 
where $d^c=J_1 d$. The Ricci shrinker equation \eqref{eq:rso} can be written as
\begin{align*}
\Rc_1-\frac{1}{2} \mathcal L_{J_1X} \omega_P-\frac{1}{2} \omega_P=0.
\end{align*} 
Therefore, from \eqref{eq:m02}, \eqref{eq:m03} and \eqref{eq:m04}, we have
\begin{align*}
\partial \bar \partial \lc -\log \det(\phi_{ij})+f_1-\phi \rc \equiv 0,
\end{align*} 
and thus
\begin{align}\label{eq:m05}
-\log \det(\phi_{ij})+f_1-\phi=\la a, y \ra+c,
\end{align} 
for a constant vector $a=(a_1,\cdots,a_n)$ and a constant $c$. Applying the Legendre transformation to \eqref{eq:m05}, we obtain
\begin{align}\label{eq:m06}
u_ix_i-u(x)+a_i u_i-\log \det(u_{ij})+c=f_1.
\end{align} 
Since $f_1$ is a Hamiltonian function of $X$, it follows from Proposition \ref{prop:kros} (iii) that there exists a vector $b_X \in \lt$ and constant $c'$ such that $f_1=\la b_X, x\ra+c'$. Therefore, after absorbing all constants into $u$, we obtain from \eqref{eq:m06}
\begin{align}\label{eq:m07}
u_ix_i-u(x)+a_i u_i-\log \det(u_{ij})= \la b_X, x \ra.
\end{align} 
Now, we claim that the vector $a \equiv 0$. Indeed, we set $L_k(x)=\la x,m_kn_k\ra+2$. Then, we can rewrite \eqref{eq:m07} as
\begin{align}\label{eq:m08}
(u_P)_ix_i-u_P(x)+\sum_{k=1}^N \log L_k+a_iu_i=\log\lc \det(u_{ij})\prod_{k=1}^N L_k \rc+(u_P-u)_ix_i+u(x)-u_P(x)+ \la b_X, x \ra.
\end{align} 
It follows from Theorem \ref{thm:bound} that the right-hand side of \eqref{eq:m08} is a smooth function on $P$. On the other hand, the term $(u_P)_ix_i-u_P(x)$, which is the Legendre transformation of $u_P$, satisfies by Proposition \ref{prop:class} (i)
\begin{align}\label{eq:m09}
(u_P)_ix_i-u_P(x)=-\sum_{k=1}^N \log L_k+\frac{1}{2} \sum_{k=1}^N \la x,m_k n_k \ra.
\end{align} 
Combining \eqref{eq:m08} with \eqref{eq:m09}, we conclude that the term $a_iu_i$, and hence $a_i (u_P)_i$ are smooth on $P$. However, we know that
\begin{align}\label{eq:m10}
a_i (u_P)_i=\frac{1}{2} a_i \sum_{k=1}^{N} (L_k)_i \lc \log L_k+1 \rc.
\end{align} 
Note that $(L_k)_i$ is constant. Therefore, \eqref{eq:m10} forces $L_k(a)=2$ for any $k$ and hence
\begin{align*}
\la n_k, a \ra=0.
\end{align*} 
Since $P$ is rational and simple, it implies that $a \equiv 0$. In sum, the proof is complete.
\end{proof}

Next, we show that the vector $b_X \in \lt$ in the equation \eqref{eq:m01} is independent of the choice of the Ricci shrinker metric $g_1$. To achieve this, we consider the following weighted volume functional as in \cite{CDS24} and \cite{Ci22}. 
\begin{defn}[Weighted volume functional] \label{def:wvf}
Given a polyhedron as in \eqref{eq:rep3a}, the weighted volume functional is defined for $b \in \lt$ by 
\begin{align*}
F(b) = \int_P e^{- \langle b,x \rangle} \,dx.
\end{align*}
\end{defn}
It follows from the representation \eqref{eq:rep3a} that $P$ contains $0$ as an interior point. Now, we have

\begin{lem} \label{lem:fun}
$F(b)$ is finite if and only if $b \in \mathrm{int}(C'(P))$, the interior of the dual of the asymptotic cone of $P$. In addition, $F(b)$ is convex and proper on $\mathrm{int}(C'(P))$.
\end{lem}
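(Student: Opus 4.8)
The plan is to establish the finiteness dichotomy first and then deduce convexity and properness, using the Minkowski decomposition of Lemma \ref{lem:mink} as the key geometric input.

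For the forward direction I would assume $b \in \mathrm{int}(C'(P))$ and prove $F(b) < \infty$ by a uniform decay estimate for the integrand. Writing $P = \mathrm{Conv}(\{v_i\}) + C(P)$ and fixing $R$ with $\mathrm{Conv}(\{v_i\}) \subset \overline{B}(0,R)$, one has $P \subset \overline{B}(0,R) + C(P)$. Since $\la b, \cdot \ra$ is strictly positive on $C(P)\setminus\{0\}$, minimizing it over the compact slice $\{w \in C(P) : |w| = 1\}$ and invoking homogeneity produces $c > 0$ with $\la b, w\ra \ge c|w|$ on $C(P)$. Decomposing $x = u + w$ with $|u| \le R$, $w \in C(P)$ then gives $\la b, x\ra \ge c|x| - cR - R|b|$, so that $e^{-\la b,x\ra} \le e^{cR + R|b|} e^{-c|x|}$ on $P$ and
\begin{align*}
F(b) \le e^{cR + R|b|} \int_{\R^n} e^{-c|x|}\, dx < \infty.
\end{align*}

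For the converse I would show that if $b \notin \mathrm{int}(C'(P))$ then $F(b) = \infty$, by exhibiting a half-cylinder in $P$ on which the integrand does not decay. In this case there is a recession direction $w_0 \in C(P) \setminus \{0\}$ with $\la b, w_0 \ra \le 0$ (strictly negative if $b \notin C'(P)$, zero if $b \in \partial C'(P)$). Choosing an interior ball $B \subset P$ and using that $w_0$ lies in the recession cone, the half-cylinder $B + \R_{\ge 0} w_0$ lies in $P$; splitting $\R^n = w_0^\perp \oplus \R w_0$ and integrating factors the contribution as a positive transverse integral times $\int_0^\infty e^{-t \la b, w_0\ra}\, dt = \infty$.

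Finally, convexity is immediate: for fixed $x$ the map $b \mapsto e^{-\la b, x\ra}$ is convex and $\Omega := \mathrm{int}(C'(P))$ is convex, so integrating the pointwise convexity inequality over $P$ gives convexity of $F$. For properness, since $F$ is continuous and convex on the open convex set $\Omega$, it suffices to show $F \to +\infty$ along any sequence leaving all compact subsets of $\Omega$. Such a sequence either converges to a finite $b_\infty \in \partial C'(P)$, where $F(b_\infty) = \infty$ by the previous paragraph, so Fatou's lemma yields $\liminf F(b_k) \ge F(b_\infty) = \infty$; or satisfies $|b_k| \to \infty$, in which case I would use that $0 \in \mathrm{int}(P)$ (as noted after Definition \ref{def:wvf}): for $B(0,r) \subset P$ the cap $\{x \in B(0,r) : \la b/|b|, x \ra \le -r/2\}$ has direction-independent volume $m_0 > 0$ and there $e^{-\la b, x\ra} \ge e^{r|b|/2}$, so $F(b) \ge m_0 e^{r|b|/2} \to \infty$. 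Hence the sublevel sets of $F$ are compact. I expect the only genuinely substantive step to be the uniform bound $\la b, x\ra \ge c|x| - C$ on all of $P$: this is exactly where properness of the asymptotic cone and the Minkowski splitting are indispensable, since $P$ need not be contained in any single translate of $C(P)$; everything else is routine, with Fatou smoothing over the delicate boundary behavior for properness.
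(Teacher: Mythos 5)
Your proof is correct. The paper itself gives no argument here beyond ``follows verbatim from \cite[Proposition 3.1]{Ci22}'', and what you have written is precisely the standard argument behind that citation: the Minkowski decomposition of Lemma \ref{lem:mink} yields the uniform bound $\langle b,x\rangle \ge c|x|-C$ on $P$ for $b\in\mathrm{int}(C'(P))$, a recession ray with $\langle b,w_0\rangle\le 0$ forces divergence otherwise, and convexity plus the Fatou/cap estimates give properness.
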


\begin{proof}
The proof follows verbatim from \cite[Proposition 3.1]{Ci22}.
\end{proof}

By taking the differential, the unique critical point $b$ of $F(b)$, whose existence is guaranteed by Lemma \ref{lem:fun}, is characterized by
\begin{align*}
\frac{\partial}{\partial b^j} F = - \int_P x_j e^{- \langle b,x \rangle} dx=0.
\end{align*}

For later applications, we then prove
\begin{lem}\label{lem:est1}
For any integer $m \ge 0$, we have
\begin{align*}
\int_P |x^m| e^{ - \langle b_X,x \rangle} dx <\infty.
\end{align*}
\end{lem}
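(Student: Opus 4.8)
The plan is to reduce the claim to the finiteness criterion of Lemma \ref{lem:fun}, which says that $F(b)=\int_P e^{-\la b,x\ra}\,dx$ is finite exactly when $b\in\mathrm{int}(C'(P))$, and then to absorb the polynomial weight $|x^m|$ into a marginally smaller exponential. The crucial preliminary observation, and what I expect to be the main point, is that $b_X$ itself lies in $\mathrm{int}(C'(P))$.

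First I would establish $b_X\in\mathrm{int}(C'(P))$. Recall from the proof of Proposition \ref{prop:symp} that $f_1=\la b_X,x\ra+c'$ is a Hamiltonian function of $X=J_1\na^{g_1}f_1$, and that by Proposition \ref{prop:estf} the potential $f_1$ is proper and bounded below on $M_P$. Since $(M_P,\omega_P)$ satisfies $(\dagger)$, the moment map $\Phi_P\colon M_P\to P$ is proper (Definition \ref{def:orb1}(b)), so the linear function $\la b_X,\cdot\ra$ is itself proper and bounded below on $P$. As recorded after Definition \ref{def:poly} (cf. \cite[Appendix]{PW94}), a linear functional is bounded below on $P$ precisely when it is nonnegative on the asymptotic cone $C(P)$, i.e. lies in $C'(P)$, and it is moreover proper on $P$ precisely when it is strictly positive on $C(P)\setminus\{0\}$, i.e. lies in $\mathrm{int}(C'(P))$. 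Hence $b_X\in\mathrm{int}(C'(P))$.

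Next I would record a linear growth estimate. Using the Minkowski decomposition $P=\mathrm{Conv}(\{v_i\})+C(P)$ of Lemma \ref{lem:mink}, write any $x\in P$ as $x=v+w$ with $v$ in the compact convex hull of the vertices and $w\in C(P)$. Since $b_X\in\mathrm{int}(C'(P))$, there is a constant $c>0$ with $\la b_X,w\ra\ge c|w|$ for all $w\in C(P)$; combined with the boundedness of the vertex part this yields a constant $C_0$ such that
\begin{align*}
\la b_X,x\ra\ge c|x|-C_0 \qquad \text{for all } x\in P.
\end{align*}
Because $\mathrm{int}(C'(P))$ is an open cone, $(1-\epsilon)b_X\in\mathrm{int}(C'(P))$ for every $\epsilon\in(0,1)$, so I would split
\begin{align*}
|x^m|\,e^{-\la b_X,x\ra}=\bigl(|x^m|\,e^{-\epsilon\la b_X,x\ra}\bigr)\,e^{-(1-\epsilon)\la b_X,x\ra}.
\end{align*}
The growth estimate gives $|x^m|\,e^{-\epsilon\la b_X,x\ra}\le |x^m|\,e^{-\epsilon c|x|+\epsilon C_0}$, which is bounded on $P$ by some constant $C_1$ since $|x^m|$ grows at most polynomially in $|x|$ and is therefore dominated by the decaying exponential. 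Consequently
\begin{align*}
\int_P |x^m|\,e^{-\la b_X,x\ra}\,dx\le C_1\int_P e^{-(1-\epsilon)\la b_X,x\ra}\,dx=C_1\,F\bigl((1-\epsilon)b_X\bigr)<\infty
\end{align*}
by Lemma \ref{lem:fun}, which completes the argument.

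The main obstacle is the first step: identifying $b_X$ as an interior point of the dual asymptotic cone, which is exactly where the properness and lower boundedness of the shrinker potential $f_1$ (from Proposition \ref{prop:estf}) and the properness of the moment map enter. Once the equivalence between properness and lower boundedness of a linear functional on $P$ and membership in $\mathrm{int}(C'(P))$ is in hand—the same mechanism underlying Lemma \ref{lem:fun}—the remaining polynomial-versus-exponential comparison is routine.
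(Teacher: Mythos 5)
Your proof is correct and follows essentially the same route as the paper: the key step in both is the linear growth bound $\la b_X, x\ra \ge \ep|x| - C$ on $P$, obtained from $b_X \in \mathrm{int}(C'(P))$ together with the Minkowski decomposition of Lemma \ref{lem:mink}, after which the polynomial-versus-exponential comparison is immediate. Your additional justification that $b_X \in \mathrm{int}(C'(P))$ (via properness and lower boundedness of $f_1$ and properness of the moment map) is a worthwhile supplement, since the paper asserts this membership without comment at this point in the argument.
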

\begin{proof}
Since $b_X \in \mathrm{int}(C'(P))$, there exists a constant $\ep>0$ such that $\la b_X, w \ra \ge \ep |w|$ for any $w \in C(P)$. By Lemma \ref{lem:mink}, we can decompose $P=V+C(P)$, where $V$ is the polytope that is the convex hull of all vertices of $P$. Therefore, we conclude
\begin{align*}
\la b_X, x \ra \ge \ep |x|-C
\end{align*}
for any $x \in P$. From this, the conclusion is immediate. 
\end{proof}

Now, one can prove

\begin{lem}
The constant vector $b_X$ in \eqref{eq:m01} is the critical point of $F$, which is determined independently of $g_1$.
\end{lem}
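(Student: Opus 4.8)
The plan is to verify that $b_X$ satisfies the Euler--Lagrange equation $\int_P x_j\, e^{-\la b_X,x\ra}\,dx=0$ for every $j$, and then to conclude by the uniqueness of the critical point of $F$. First I would record that $b_X$ lies in $\mathrm{int}(C'(P))$, so that Lemma \ref{lem:fun} and Lemma \ref{lem:est1} apply. This holds because $f_1=\la b_X,x\ra+c'$ is a Hamiltonian function of $X=J\na^{g_1}f_1$ and is proper and bounded below on $P$ (Corollary \ref{cor:cha} together with Proposition \ref{prop:estf}); a linear function being proper and bounded below on $P$ is precisely the condition that its coefficient vector lie in $\mathrm{int}(C'(P))$. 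In particular $F(b_X)<\infty$, and all moments $\int_P|x^m|e^{-\la b_X,x\ra}\,dx$ are finite.

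The heart of the argument is a change of variables driven by \eqref{eq:m01}. Writing $\phi(y)=u_ix_i-u(x)$ for the Legendre transform and $y_i=u_i$, equation \eqref{eq:m01} reads $\log\det(u_{ij})=\phi-\la b_X,x\ra$, i.e.
\begin{align*}
e^{-\la b_X,x\ra}=\det(u_{ij})\,e^{-\phi}.
\end{align*}
The map $x\mapsto y=\na u(x)$ is a diffeomorphism of $\mathring P$ onto $\R^n$ with Jacobian $\det(u_{ij})$, so $\det(u_{ij})\,dx=dy$, while Legendre duality gives $x_j=\phi_j(y)$. Hence
\begin{align*}
\int_P x_j\,e^{-\la b_X,x\ra}\,dx=\int_{\R^n}\phi_j\,e^{-\phi}\,dy=-\int_{\R^n}\frac{\partial}{\partial y_j}\lc e^{-\phi}\rc\,dy.
\end{align*}
Since $\int_{\R^n}e^{-\phi}\,dy=F(b_X)<\infty$ and $\int_{\R^n}\snorm{\partial_{y_j}(e^{-\phi})}{}\,dy=\int_P|x_j|\,e^{-\la b_X,x\ra}\,dx<\infty$ by Lemma \ref{lem:est1}, both $e^{-\phi}$ and its derivative lie in $L^1(\R^n)$; a standard limiting argument (integrating over $[-R,R]^n$ and choosing, via Fubini, a sequence $R_k\to\infty$ along which the boundary slices of the integrable function $e^{-\phi}$ tend to $0$) shows that the last integral vanishes. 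Thus $\int_P x_j\,e^{-\la b_X,x\ra}\,dx=0$ for all $j$, so $b_X$ is a critical point of $F$.

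For the independence statement I would invoke Lemma \ref{lem:fun}: $F$ is convex and proper on $\mathrm{int}(C'(P))$, so it has a unique critical point, which coincides with its minimizer. As $F$ is constructed solely from $P$, and $P$ is independent of $g_1$ by Lemma \ref{lem:class2}, the vector $b_X$ is determined by $P$ alone and is therefore independent of the choice of Ricci shrinker metric $g_1$.

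I expect the only genuinely delicate point to be the vanishing of the boundary term in the integration by parts: although the moment bounds of Lemma \ref{lem:est1} guarantee that both $e^{-\phi}$ and $\partial_{y_j}(e^{-\phi})$ belong to $L^1(\R^n)$, passing from this integrability to the equality $\int_{\R^n}\partial_{y_j}(e^{-\phi})\,dy=0$ on the unbounded domain $\R^n$ still requires the careful limiting argument indicated above, since a function and its partial derivative may both be integrable without the derivative having zero integral unless suitable decay of the boundary slices is established.
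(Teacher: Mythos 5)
Your proof is correct and follows the same skeleton as the paper's: verify the Euler--Lagrange equation $\int_P x_j e^{-\la b_X,x\ra}\,dx=0$ by passing to the Legendre dual via \eqref{eq:m01} and integrating by parts, then invoke the convexity and properness of $F$ from Lemma \ref{lem:fun} and the metric-independence of $P$ from Lemma \ref{lem:class2}. You differ in two auxiliary steps, both defensibly. First, you establish $F(b_X)<\infty$ combinatorially, by observing that $f_1=\la b_X,x\ra+c'$ is proper and bounded below on $P$ (hence $b_X\in\mathrm{int}(C'(P))$) and appealing to Lemma \ref{lem:fun}; the paper instead rewrites $F(b_X)$ as $C\int_{M_P}e^{-f_1}\,dV_{g_1}$ and uses the shrinker estimates of Propositions \ref{prop:estf} and \ref{prop:volume}. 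Your route has the virtue of explicitly justifying $b_X\in\mathrm{int}(C'(P))$, which the paper's Lemma \ref{lem:est1} tacitly assumes. Second, for the vanishing of $\int_{\R^n}\partial_{y_j}(e^{-\phi})\,dy$, the paper invokes the linear growth lower bound $\phi(y)\ge C^{-1}|y|-C$ from \cite[Lemma 2.6]{BB13} (valid since $0\in\mathring P$), so that $e^{-\phi}$ decays exponentially and the boundary term dies immediately; you instead use only the $L^1$ bounds supplied by Lemma \ref{lem:est1} together with a Fubini slicing argument to pick radii along which the boundary slices vanish. Both are valid; the paper's is shorter given the citation, while yours is more self-contained and correctly flags that mere integrability of a derivative does not by itself force its integral to vanish.
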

\begin{proof}
We first show that $F(b_X)$ is finite. Under the momentum-angle coordinates, we have
\begin{align*}
\int_P e^{ - \langle b_X,x \rangle} dx=(2 \pi)^{-n} \int_{\mathring{P} \times \T^n} e^{-\langle b_X,x \rangle} d x dt =(2 \pi)^{-n} \int_{(\C^*)^n} e^{-f_1+c} \omega_P^n=C \int_{M_P} e^{-f_1} \,dV_{g_1},
\end{align*}
where we have used the fact that $\omega = \sum_i d x_i \wedge dt_i$ from \eqref{eq:id2} and $f_1=\la b_X, x \ra+c$ as shown before. Therefore, it follows from Proposition \ref{prop:estf} and Proposition \ref{prop:volume} that $F(b_X)$ is finite.

Furthermore, since $\phi$ is the Legendre transformation of $u$, it follows from \eqref{eq:m01} that
\begin{align} \label{eq:410}
\int_P x_j e^{ - \langle b_X,x \rangle} dx= \int_{\R^n} \frac{\partial\phi}{\partial y_j} e^{- \phi} dy = - \int_{\R^n} \frac{\partial e^{-\phi}}{\partial y_j} dy.
\end{align}
Notice that by Lemma \ref{lem:est1}, the first integral above is well-defined. Since $P$ contains $0$, it follows from \cite[Lemma 2.6]{BB13} that $\phi$ satisfies
\begin{align}\label{eq:411}
\phi(y) \ge C^{-1} |y|-C
\end{align}
for a constant $C>1$. Applying the integration by parts, \eqref{eq:410} yields that for any $1 \le j \le n$,
\begin{align*}
\int_P x_j e^{ - \langle b_X,x \rangle} dx=0,
\end{align*}
which finishes the proof.
\end{proof}

Suppose now $(M, g', J, f', \T^n)$ is another toric K\"ahler Ricci shrinker orbifold with bounded scalar curvature. As before, we choose a $\T^n$-invariant symplectomorphism $\psi:M_P \to M$ and set $J_2:=\psi^* J$, $f_2:=\psi^* f$ and $g_2:=\omega_P(\cdot, J_2 \cdot)$. Moreover, we denote the symplectic potential of $g_2$ by $u'$. Clearly, $u'$ satisfies as \eqref{eq:m01}
\begin{align}\label{eq:n01}
u'_i x_i-u'(x)-\log \det(u'_{ij})=\la b_X,x \ra.
\end{align} 
To prove the uniqueness, we first set up the moduli space of symplectic potentials with respect to \eqref{eq:n01}.

\begin{defn}[Space $\mathcal E$] \label{def:e}
The moduli space $\mathcal E$ consists of all smooth function $v$ satisfying
\begin{enumerate}[label=(\roman*)]
\item $v$ is a smooth and strictly convex function on $\mathring{P}$;

\item $v-u_P$ is a smooth function on $P$;

\item $\na v: P \to \R^n$ is surjective;

\item $\displaystyle \int_P |v| e^{-\la b_X, x \ra} \,dx <\infty$.
\end{enumerate}
\end{defn}

We first show 

\begin{lem}
The symplectic potentials $u$ and $u'$ belong to $\mathcal E$.
\end{lem}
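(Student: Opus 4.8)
The plan is to check the four defining conditions of $\mathcal E$ in Definition \ref{def:e} for $u$, and then to observe that $u'$ follows by the identical argument since, by the preceding lemma, it satisfies the same equation \eqref{eq:n01} with the \emph{same} vector $b_X$. Conditions (i)--(iii) are essentially already on record. Condition (i) is the statement of Guillemin's lemma, which produces $u$ as a smooth, strictly convex function on $\mathring P$. Condition (ii) is precisely Theorem \ref{thm:bound}(i), giving that $u-u_P$ extends smoothly to all of $P$. Condition (iii) was noted just before Proposition \ref{prop:symp}: because the $\T^n$-action extends to $(\C^*)^n$ with respect to $J_1$, the coordinate $y=\nabla u$ globally parametrizes the principal orbit $\mathring M_P=(\C^*)^n$, so $\nabla u$ maps $\mathring P$ onto $\R^n$.

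The substantive point is the integrability condition (iv), $\int_P|u|\,e^{-\langle b_X,x\rangle}\,dx<\infty$. The key device I would use is to pass to the dual coordinate $y$ through the Legendre transform $\phi$ of $u$. From \eqref{eq:m05} with $a=0$, together with $f_1=\langle b_X,x\rangle+c'$, one gets $\det(\phi_{ij})=C_0\,e^{\langle b_X,x\rangle-\phi}$ for a positive constant $C_0$; hence under $x=\nabla\phi(y)$, $dx=\det(\phi_{ij})\,dy$, the weight collapses exactly: $e^{-\langle b_X,x\rangle}\,dx=C_0\,e^{-\phi(y)}\,dy$. Since $u(x)=\langle\nabla\phi(y),y\rangle-\phi(y)$ at $x=\nabla\phi(y)$, this converts the integral into
\begin{align*}
\int_P|u|\,e^{-\langle b_X,x\rangle}\,dx=C_0\int_{\R^n}\bigl|\langle\nabla\phi(y),y\rangle-\phi(y)\bigr|\,e^{-\phi(y)}\,dy.
\end{align*}
I would then invoke the lower bound \eqref{eq:411}, $\phi(y)\ge C^{-1}|y|-C$ (valid because $0\in\mathring P$ by the normalization \eqref{eq:rep3a}), which forces $e^{-\phi}$ to decay exponentially; this makes $\int_{\R^n}e^{-\phi}\,dy$ and $\int_{\R^n}\phi\,e^{-\phi}\,dy$ finite, the latter via $\phi\,e^{-\phi}\le e^{-\phi/2}$ for large $\phi$ and the fact that $\{\phi<0\}$ is bounded. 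For the remaining term the clean step is integration by parts, the boundary contributions vanishing by the exponential decay:
\begin{align*}
\int_{\R^n}\langle\nabla\phi,y\rangle\,e^{-\phi}\,dy=-\int_{\R^n}\langle y,\nabla(e^{-\phi})\rangle\,dy=n\int_{\R^n}e^{-\phi}\,dy<\infty.
\end{align*}
The sign of $\langle\nabla\phi,y\rangle$ is controlled by the convexity estimate $\langle\nabla\phi(y),y\rangle\ge\langle\nabla\phi(0),y\rangle\ge-|\nabla\phi(0)|\,|y|$, whose negative part is again integrable against $e^{-\phi}$. Combining these gives (iv).

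The main obstacle is exactly condition (iv): attempting to bound $|u|$ directly on $P$ is doomed, since $P$ is unbounded and ``smoothness of $u-u_P$ on $P$'' carries no growth information at infinity, while the explicit estimate $|u_P|\lesssim(1+|x|)\log(2+|x|)$ controls only the reference potential. Passing to the $y$-side is what makes the weight \emph{exact} (equal to $C_0e^{-\phi}$) and replaces the troublesome term by an integration-by-parts identity; the only inputs needed are the shrinker equation in the forms \eqref{eq:m01}/\eqref{eq:m05}, the linear lower bound \eqref{eq:411} for $\phi$, and convexity. Finally, the verification for $u'$ is verbatim, since $u'$ satisfies \eqref{eq:n01} with the same $b_X$ and its Legendre transform $\phi'$ obeys the same lower bound \eqref{eq:411}.
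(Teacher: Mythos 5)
Your proposal is correct and follows essentially the same route as the paper: conditions (i)--(iii) are dispatched by Guillemin's lemma, Theorem \ref{thm:bound}(i), and the surjectivity of $\nabla u$ noted before Proposition \ref{prop:symp}, while (iv) is handled by passing to the Legendre dual, where the shrinker equation makes the weight collapse to $e^{-\phi}\,dy$, the bound \eqref{eq:411} gives exponential decay, and $\int_{\R^n}\langle\nabla\phi,y\rangle e^{-\phi}\,dy=n\int_{\R^n}e^{-\phi}\,dy$ by integration by parts (the paper does this in polar coordinates, you do it via the divergence theorem --- the same computation). Your treatment of the absolute value via the convexity bound $\langle\nabla\phi(y),y\rangle\ge\langle\nabla\phi(0),y\rangle$ is a slightly more explicit substitute for the paper's observation that $u$ attains a minimum in $\mathring{P}$ and is therefore bounded below.
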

\begin{proof}
(i) (ii) and (ii) in the Definition \ref{def:e} are clear, so we only need to prove (iv) for $u$. First, we notice that since $\na u: P \to \R^n$ is surjective, $u$ has a minimum point in $\mathring{P}$. Since $\int_P e^{-\la b_X, x \ra} \,dx <\infty$, we only need to prove
\begin{align} \label{eq:412}
\int_P u e^{ - \langle b_X,x \rangle} dx <\infty.
\end{align}

Recall that $\phi(y)$ is the Legendre transformation of $u$ so that $\phi(y)$ grows at least linearly in $|y|$ by \eqref{eq:411}. We compute
\begin{align*}
\int_P u e^{ - \langle b_X,x \rangle} dx=\int_{\R^n }(\langle \nabla \phi,y \rangle - \phi) e^{-\phi} \,d y \leq \int_{\R^n} \langle \nabla \phi , y \rangle e^{- \phi} \,d y + C.
\end{align*}
On the other hand, by using the polar coordinates and applying the integration by parts, we have
\begin{align*}
\int_{\R^n} \langle \nabla \phi, y \rangle e^{- \phi} d y = \int_{S^{n-1}} \int_0^{\infty} r^n \frac{\partial \phi}{\partial r} e^{- \phi} d r d \Theta = \frac{n}{2} \int_{S^{n-1}}\int_0^{\infty} r^{n-1} e^{- \phi} dr d\Theta = n \int_{\R^n} e^{-\phi} d y.
\end{align*}
Therefore, \eqref{eq:412} holds.
\end{proof}

Now, we can state the uniqueness result in the space $\mathcal E$.

\begin{prop} \label{prop:uniq}
Up to an addition of a linear function, there is at most one solution $v \in \mathcal E$ on $\mathring{P}$ of
\begin{align}\label{eq:p01}
v_i x_i-v(x)-\log \det(v_{ij})= \la b_X, x \ra.
\end{align}
\end{prop}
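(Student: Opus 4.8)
The plan is to realize \eqref{eq:p01} as the Euler--Lagrange equation of a Ding-type functional on $\mathcal E$ (cf. Definition \ref{def:e}) and then to exploit its convexity and rigidity along affine paths, following \cite{BB13} and \cite{Ci22}. For $v \in \mathcal E$ let $\phi_v$ denote its Legendre transform, so that $\phi_v(y)+v(x)=\la y,x\ra$ with $y=\na v(x)$, and set
\begin{align*}
\mathcal D(v)=-\log \int_{\R^n} e^{-\phi_v}\,dy+\frac{1}{F(b_X)}\int_P v\, e^{-\la b_X,x\ra}\,dx.
\end{align*}
This is finite on $\mathcal E$: the first term is controlled by the linear growth \eqref{eq:411} of $\phi_v$, while the second is controlled by condition (iv) in Definition \ref{def:e} together with Lemma \ref{lem:fun}.

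First I would compute the first variation. Using the envelope identity $\delta \phi_v=-\delta v$ at corresponding points and the change of variables $y=\na v(x)$, $dy=\det(v_{ij})\,dx$, one finds
\begin{align*}
\delta \mathcal D=\int_P \delta v\lc -\frac{W_v}{\int_P W_v\,dx}+\frac{e^{-\la b_X,x\ra}}{F(b_X)}\rc dx,\qquad W_v:=e^{v-v_ix_i}\det(v_{ij}).
\end{align*}
Hence $v$ is a critical point of $\mathcal D$ precisely when $W_v$ is a constant multiple of $e^{-\la b_X,x\ra}$, that is, when $v_ix_i-v-\log\det(v_{ij})=\la b_X,x\ra$ up to an additive constant; this is exactly \eqref{eq:p01} after normalizing $v$. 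Thus any solution of \eqref{eq:p01} in $\mathcal E$ is a critical point of $\mathcal D$, and the difference of two solutions is an admissible variation, since $v-u_P$ is smooth on $P$.

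Now suppose $v_0,v_1\in\mathcal E$ both solve \eqref{eq:p01}, and consider the affine path $v_t=(1-t)v_0+tv_1$. One checks that $v_t\in\mathcal E$ for all $t\in[0,1]$, the strict convexity being immediate from that of $v_0,v_1$. The key structural fact from the toric Abreu--Guillemin theory is that an affine segment of symplectic potentials corresponds to a geodesic $\phi_{v_t}$ in the space of $\T^n$-invariant K\"ahler potentials. Since the second term of $\mathcal D$ is affine in $v$, while the first term $-\log\int e^{-\phi_{v_t}}\,dy$ is convex along this geodesic by Berndtsson's theorem (cf. \cite{BB13} and its toric specialization \cite{Ci22}), the function $g(t):=\mathcal D(v_t)$ is convex on $[0,1]$. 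Because $v_0$ and $v_1$ are critical points, $g'(0)=g'(1)=0$; convexity then forces $g'\equiv 0$, so $g$ is affine and hence the convex term is itself affine in $t$. The rigidity (equality) case of Berndtsson's convexity theorem then implies that the geodesic is trivial, which in the toric setting means $v_1-v_0$ is an affine-linear function; this is the desired conclusion.

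The main obstacle is the convexity-and-rigidity input, namely that $-\log\int e^{-\phi_{v_t}}\,dy$ is convex along the geodesic with equality only for trivial geodesics. This is precisely Berndtsson's subharmonicity theorem together with its rigidity statement, which I import from \cite{BB13} and \cite{Ci22}; the only points requiring care on our side are the finiteness of $\mathcal D$ and the justification of differentiating under the integral sign, both guaranteed by the integrability condition (iv) in Definition \ref{def:e} and the growth estimate \eqref{eq:411}.
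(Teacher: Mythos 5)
Your proposal is correct and follows essentially the same route as the paper's proof: both realize \eqref{eq:p01} as the critical-point equation of a Ding-type functional on $\mathcal E$, take the affine path $v_t=(1-t)v_0+tv_1$, and invoke the convexity and rigidity results of \cite{BB13} and \cite{Ci22} to conclude that $v_1-v_0$ is affine. The only differences are cosmetic (a fixed normalization by $F(b_X)$ in place of $\mathcal D_1(v_t)$, and checking condition (iii) of Definition \ref{def:e} for $v_t$ via \cite[Lemma 3.6]{Ci22}), and that the paper makes explicit the reduction to compactly supported variations followed by an approximation argument (citing \cite[Lemma 3.10]{Ci22}) where you only note that differentiating under the integral sign must be justified.
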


\begin{proof}
This proof follows from \cite[Theorem 3.11]{Ci22} verbatim, and we sketch it for readers' convenience. 

Suppose $v_0,v_1 \in \mathcal E$ are two solutions to $\eqref{eq:p01}$. We consider the geodesic of the symplectic potentials $v_t=(1-t)v_0+tv_1$ for $t \in [0,1]$. It is straightforward that $v_t$ satisfies Definition \ref{def:e} (i) (ii) and (iv). Moreover, (iii) also holds, as shown in \cite[Lemma 3.6]{Ci22}. Therefore, $v_t \in \mathcal E$ for any $t \in [0,1]$. 

Denoting the Legendre transformation of $v_t$ by $\phi_t$, we define
\begin{align*}
\mathcal{D}_1(v_t) = \int_{\R^n} e^{-\phi_t(y)}\, dy
\end{align*}
and the Ding functional:
\begin{align*}
\mathcal{D}(v_t) = \frac{1}{ \mathcal{D}_1(v_t)} \int_P v_t e^{-\la b_X, x \ra}\,dx -\log \mathcal{D}_1(v_t).
\end{align*}

We first assume $w:=v_1-v_0$ is compactly supported. By the general formula $\dot{v_t}(x)+\dot{\phi_t}(\na v_t(x))=0$, a direct computation shows that
\begin{align*}
\left. \frac{d}{dt} \right \vert_{t=0} \mathcal{D}_1(v_t) = \int_{P} w e^{-\la b_X, x \ra}\, dx
\end{align*}
and hence
\begin{align}\label{eq:p02}
\left. \frac{d}{dt} \right \vert_{t=0} \mathcal{D}(v_t) =0.
\end{align}
Similarly, we have
\begin{align}\label{eq:p03}
\left. \frac{d}{dt} \right \vert_{t=1} \mathcal{D}(v_t) =0.
\end{align}

By the convexity and rigidity of the Ding functional (cf. \cite[Proposition 2.14]{BB13} and \cite[Proposition 3.9]{Ci22}), it follows from \eqref{eq:p02} and \eqref{eq:p03} that $v_1=v_0+L$, where $L$ is a linear function on $P$.

In the general case when $w$ is not compactly supported, one can use a sequence of compactly supported functions to approximate $w$ and the value of $\mathcal D(v_t)$; see \cite[Lemma 3.10]{Ci22} for details.

In sum, the proof is complete.
\end{proof}

If we apply Proposition \ref{prop:uniq} to $u$ and $u'$, we conclude that $u=u'+L$ for a linear function $L$ on $P$. Therefore, under the momentum-angle coordinates $\{x_1,\cdots,x_n; t_1,\cdots,t_n\}$, it follows from \eqref{eq:kahs} that $g_1=g_2$ on $\mathring{M_P}$. Since both $g_1$ and $g_2$ are complete, it implies that $g_1=g_2$ on $M_P$. Now, we define $\Psi:=\psi \circ \varphi^{-1}: M \to M$. Then, it is clear that $\Psi^* g'=g$. Moreover, since $\Psi^* \omega' =\omega$, it follows that $\Psi$ is a $\T^n$-invariant self-biholomorphism of $(M, J)$. In sum, we have completed the proof of Theorem \ref{thm:A}.

\subsubsection*{Proof of Theorem \ref{thm:B}}

Suppose $(M^{2n}, J, \T^n)$ is a complex orbifold with an effective and holomorphic $\T^n$-action. Moreover, we assume $(M, g,J, f)$ is a K\"ahler Ricci shrinker orbifold with $X=J\na f \in \lt$ and bounded Ricci curvature. 

Let $ \mathfrak{aut}^{X}(M)$ be the Lie algebra of real holomorphic vector fields on $M$ that commute with $JX$ and hence $X$, and let $\g^X$ be the Lie subalgebra of $\mathfrak{aut}^{X}(M)$ which is also Killing. The following Matsushima-type theorem is proved in \cite[Theorem 5.1]{CDS24} for the manifold case and can be generalized to the orbifold case by the same proof. Note that the only difference in the proof is the choice of the cutoff function, which, in the orbifold case, can be chosen as in the proof of Proposition \ref{prop:sca} as $\eta(f/A)$.

\begin{thm}[Matsushima theorem]
With the above assumptions, one has the decomposition:
\begin{align*}
\mathfrak{aut}^{X}(M) = \mathfrak{g}^{X} \oplus J \mathfrak{g}^{X}.
\end{align*}
\end{thm}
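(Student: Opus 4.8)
The plan is to follow the manifold argument of \cite[Theorem 5.1]{CDS24} and adapt it to the orbifold setting, the only change being the analytic justification of the integration by parts, as explained below. The inclusion $\mathfrak{g}^{X} \oplus J\mathfrak{g}^{X} \subseteq \mathfrak{aut}^{X}(M)$ together with the directness of the sum is the easy direction: if $Z \in \mathfrak{g}^{X}$ then $JZ$ is again real holomorphic (since $J$ is integrable and $\mathcal{L}_Z J = 0$) and still commutes with $X = J\na f$, so $JZ \in \mathfrak{aut}^{X}(M)$; and any element of $\mathfrak{g}^{X} \cap J\mathfrak{g}^{X}$ is simultaneously a Killing field and a gradient field, hence has parallel Hessian and must vanish by the properness of $f$ (Proposition \ref{prop:estf}), exactly as in the manifold case. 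The content is therefore the reverse inclusion: every $Y \in \mathfrak{aut}^{X}(M)$ splits as a Killing field plus $J$ of a Killing field.

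First I would attach to $Y$ a complex holomorphy potential. Writing $\eta = \iota_Y \omega$ and decomposing it into types, the holomorphicity $\mathcal{L}_Y J = 0$ forces the $(0,1)$-part $\eta^{0,1}$ to be $\db$-closed. Using the $e^{-f}$-weighted Hodge theory of the shrinker together with the vanishing $H^1_{\mathrm{dR}}(M) = 0$ from Corollary \ref{cor:dr1}, one solves $\db u_Y = \eta^{0,1}$ for a complex-valued function $u_Y$, so that $Y^{1,0} = \na^{1,0} u_Y$; moreover the soliton equation \eqref{eq:rso} shows that $u_Y$ satisfies a real linear drift equation of the form $\df u_Y + u_Y = \mathrm{const}$. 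Since this equation has real coefficients, the real and imaginary parts $u_Y = u_1 + \sqrt{-1}\, u_2$ each satisfy it, so $\na^{1,0} u_1$ and $\na^{1,0} u_2$ are separately holomorphy potentials of real holomorphic fields commuting with $X$.

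It then remains to match the two pieces with $\mathfrak{g}^{X}$ and $J\mathfrak{g}^{X}$. Taking real parts of $Y^{1,0} = \na^{1,0}(u_1 + \sqrt{-1}\, u_2)$ gives $Y = \na u_1 + J\na u_2$. For a real holomorphy potential $v$, the field $J\na v$ is again real holomorphic and is Hamiltonian (with $\iota_{J\na v}\omega = -dv$), hence Killing; thus $J\na u_2 \in \mathfrak{g}^{X}$, while $\na u_1 = J(-J\na u_1) \in J\mathfrak{g}^{X}$ by the same reasoning applied to $u_1$. Therefore $Y = (J\na u_2) + (\na u_1) \in \mathfrak{g}^{X} \oplus J\mathfrak{g}^{X}$, which yields the desired decomposition.

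The main obstacle is the analytic step of producing the potential and running the Bochner and integration-by-parts identities on a noncompact, singular space without boundary contributions at infinity; this is precisely where the stronger hypothesis of bounded Ricci curvature is used, since it supplies the growth and decay estimates that make the $e^{-f}$-weighted computations and the weighted Hodge decomposition valid. In the orbifold case the only modification to the manifold proof of \cite[Theorem 5.1]{CDS24} is the choice of cutoff: all integrations by parts are carried out with $\eta(f/A)$, exactly as in the proof of Proposition \ref{prop:sca}, and the singular set causes no difficulty because every object involved lifts equivariantly to the uniformizing charts while the computations take place on the dense, geodesically convex regular part $M_{\mathrm{reg}}$.
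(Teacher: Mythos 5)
Your proposal is correct and follows essentially the same route as the paper, which itself gives no independent argument but simply invokes \cite[Theorem 5.1]{CDS24} and notes that the only orbifold modification is taking the cutoff $\eta(f/A)$ as in Proposition \ref{prop:sca}; your sketch is a faithful expansion of that cited proof (holomorphy potential via weighted $\db$-solvability and $H^1_{\mathrm{dR}}(M)=0$, the real drift equation $\df u_Y + u_Y = \mathrm{const}$, splitting into real and imaginary parts, and the identification of $J\na v$ as Killing), together with the identical remark about the cutoff. The only cosmetic quibble is the phrase ``parallel Hessian'' in the directness argument, where you mean that a field which is both Killing and a gradient has vanishing covariant derivative, hence is parallel and then forced to vanish.
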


Furthermore, we can show as \cite[Proposition 5.6, Proposition 5.9]{CDS24} that $\mathfrak{aut}^{X}(M)$ is the Lie algebra of the finitely-dimensional Lie group $\mathrm{Aut}_0^X(M)$, which is the identity component of the group consists of all holomorphic automorphisms of $M$ that commute with the flow of $JX$. In addition, $\mathfrak{g}^{X}$ is the Lie algebra of a connected Lie group $G_0^X$, which is a maximal compact Lie subgroup of $\mathrm{Aut}_0^X(M)$.
Using the Matsushima theorem, we have the following Lemma similar to \cite[Lemma 4.1]{Ci22}.
\begin{lem}
\label{L602}
There exists a biholomorphism $\varphi: (M, J) \to (M, J)$ such that $\varphi^* g$ is $\T^n$-invariant.
\end{lem}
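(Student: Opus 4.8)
The plan is to realize $\T^n$ as a compact subgroup of the automorphism group $\mathrm{Aut}_0^X(M)$ and then conjugate it into the maximal compact subgroup $G_0^X$, all of whose elements are isometries; pulling back $g$ by the conjugating biholomorphism then produces a $\T^n$-invariant metric. This is exactly the strategy of \cite[Lemma 4.1]{Ci22}, transported to the orbifold setting with the Matsushima theorem and the Lie-group structure of $\mathrm{Aut}_0^X(M)$ in hand.

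First I would verify that $\T^n \subseteq \mathrm{Aut}_0^X(M)$. By hypothesis the $\T^n$-action is holomorphic, so for every $\xi \in \lt$ the fundamental vector field $\xi_M$ is real holomorphic and satisfies $\mathcal{L}_{\xi_M} J = 0$. Since $X = J\na f \in \lt$ and $\lt$ is abelian, one computes
\begin{align*}
[\xi_M, JX] = \mathcal{L}_{\xi_M}(JX) = J\,[\xi_M, X] = 0,
\end{align*}
so each $\xi_M$ commutes with $JX$, and hence the $\T^n$-action commutes with the flow of $JX$. As $\T^n$ is connected and acts by holomorphic automorphisms, it lies in the identity component $\mathrm{Aut}_0^X(M)$.

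Next, $\T^n$ is compact, and $G_0^X$ is, by the generalization of \cite[Proposition 5.6, Proposition 5.9]{CDS24}, a maximal compact subgroup of the finite-dimensional connected Lie group $\mathrm{Aut}_0^X(M)$. The Cartan--Iwasawa--Malcev theorem then furnishes $\varphi \in \mathrm{Aut}_0^X(M)$ with $\varphi\, \T^n\, \varphi^{-1} \subseteq G_0^X$. Because $\mathfrak{g}^X$ consists of Killing fields, $G_0^X$ acts by isometries of $g$, so $(\varphi\sigma\varphi^{-1})^* g = g$ for every $\sigma \in \T^n$. Consequently, for all $\sigma \in \T^n$,
\begin{align*}
\sigma^*(\varphi^* g) = (\varphi\sigma)^* g = \varphi^*\,(\varphi\sigma\varphi^{-1})^* g = \varphi^* g,
\end{align*}
which shows that $\varphi^* g$ is $\T^n$-invariant. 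Since $\varphi \in \mathrm{Aut}_0^X(M)$ is in particular a biholomorphism of $(M,J)$, this is the desired map.

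The main obstacle is not the formal verifications above ($\T^n \subseteq \mathrm{Aut}_0^X(M)$ and $G_0^X \subseteq \mathrm{Isom}(M,g)$ are routine) but rather ensuring that the Cartan--Iwasawa--Malcev conjugacy theorem genuinely applies in the orbifold context. This requires knowing that $\mathrm{Aut}_0^X(M)$ is an honest finite-dimensional connected Lie group and that $G_0^X$ is a maximal compact subgroup of it, which is precisely what the orbifold generalization of \cite[Proposition 5.6, Proposition 5.9]{CDS24} provides; granting these, the conjugacy of maximal compact subgroups is standard.
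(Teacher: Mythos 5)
Your proposal is correct and follows essentially the same route as the paper: verify $\T^n \subseteq \mathrm{Aut}_0^X(M)$, invoke the conjugacy of maximal compact subgroups (Iwasawa) to conjugate $\T^n$ into $G_0^X$, and pull back $g$ by the conjugating automorphism. You spell out the commutator computation and the final pullback identity in more detail than the paper, but the argument is the same.
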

\begin{proof}
By our assumption, $[Y, X]=[Y, JX]=0$ for any $Y \in \lt$. Then, the holomorphic action of $\T^n$ gives $\T^n \subset \mathrm{Aut}_0^X(M)$. Then, $\T^n$ is contained in a maximal compact subgroup $G$ of $\mathrm{Aut}_0^X(M)$. By the Iwasawa's theorem \cite{Iw49}, all maximal compact subgroups are conjugate, and hence there exists $\varphi \in \mathrm{Aut}_0^X(M)$ so that $\varphi \circ \T^n \circ \varphi^{-1} \leq G_0^X$. Therefore, $\varphi^* g$ is $\T^n$-invariant.
\end{proof}
With Lemma \ref{L602} and Theorem \ref{thm:A}, the proof of Theorem \ref{thm:B} is straightforward.

\vskip10pt
Yu Li, Institute of Geometry and Physics, University of Science and Technology of China, No. 96 Jinzhai Road, Hefei, Anhui Province, 230026, China; Hefei National Laboratory, No. 5099 West Wangjiang Road, Hefei, Anhui Province, 230088, China; E-mail: yuli21@ustc.edu.cn; \\

Wenjia Zhang, School of Mathematical Sciences, University  of Science and Technology of China No. 96 Jinzhai Road, Hefei, Anhui Province, 230026, China; E-mail: wj12345678@mail.ustc.edu.cn\\

\end{document}